\newenvironment{proof}{\paragraph{Proof}
}{\qed\\}
\newcommand{\Spec}{\mathop{\mathrm{Spec}}\nolimits}
\newcommand{\red}{{\mathop{\mathrm{red}}\nolimits}}
\newcommand{\hull}{\mathop{\mathrm{hull}_\nabla}\nolimits}
\newcommand{\even}{{\mathrm{even}}}
\newcommand{\gr}{\mathop{\mathrm{gr}}\nolimits}
\newcommand{\id}{\mathop{\mathrm{id}}\nolimits}
\newcommand{\Si}{\mathfrak{S}}
\newcommand{\GL}{\mathop{\mathit{GL}}\nolimits}
\newcommand{\gl}{\mathop{\mathfrak{gl}}\nolimits}
\newcommand{\SL}{\mathop{\mathit{SL}}\nolimits}
\newcommand{\Hom}{\mathop{\mathrm{Hom}}\nolimits}
\newcommand{\Ga}{{\mathbb G_a}}
\newcommand{\A}{{\cal A}}
\newcommand{\qed}{\unskip\nobreak\hfill\hbox{ $\Box$}}
\newtheorem{Theorem}{Theorem}[section]
\newtheorem{Lemma}[Theorem]{Lemma}
\newtheorem{Corollary}[Theorem]{Corollary}
\newtheorem{Remark}[Theorem]{Remark}
\newtheorem{Definition}[Theorem]{Definition}
\newtheorem{Notation}[Theorem]{Notation}
\newtheorem{NotationConvention}[Theorem]{Notations and sign conventions}
\title{Bifunctor cohomology and Cohomological finite generation for reductive groups}
\author{Antoine Touz\'e \& Wilberd van der Kallen}
\begin{document}

%\date{\now}
\maketitle
\sloppy
\begin{abstract}
Let $G$ be a reductive linear algebraic group over a field $k$.
Let $A$ be a finitely generated commutative $k$-algebra on which $G$ acts
rationally by $k$-algebra automorphisms. Invariant theory tells
that the ring of invariants $A^G=H^0(G,A)$
is finitely generated.
We show that in fact the full cohomology ring $H^*(G,A)$ is finitely generated.
The proof is based on the strict polynomial
bifunctor cohomology classes constructed in \cite{Touze}. 
We also continue the study of bifunctor cohomology of $\Gamma^*(\gl^{(1)})$. 
\end{abstract}

\section{Introduction}
Consider  a linear algebraic  group $G$, or linear algebraic group scheme $G$,
defined over a field $k$.
So $G$ is an affine group scheme whose coordinate algebra $k[G]$ is finitely
generated as a $k$-algebra.
We say that $G$ has the cohomological finite generation property (CFG)
if the following holds.
Let $A$ be a finitely generated commutative $k$-algebra on which $G$ acts
 rationally by $k$-algebra automorphisms. (So $G$ acts from the right on $\Spec(A)$.)
Then the cohomology ring $H^*(G,A)$ is finitely generated
as a $k$-algebra. 

Here, as in
\cite[I.4]{Jantzen}, we use the cohomology introduced by
Hochschild, also known as `rational cohomology'.

Our main result confirms a conjecture of the senior author:
\begin{Theorem}\label{reductiveCFG}
Any reductive linear algebraic group over $k$ has property (CFG).
\end{Theorem}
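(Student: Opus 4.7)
The plan is to follow a chain of reductions culminating in an application of the bifunctor cohomology classes of \cite{Touze}. First, using standard change-of-groups techniques --- embedding a reductive $G$ into some $\GL_n$, and relating $H^*(G,A)$ to $H^*(\GL_n, \ind_G^{\GL_n} A)$ --- one reduces the problem to $G = \GL_n$ acting rationally on a finitely generated commutative $k$-algebra $A$. One has to be careful that the induced algebra $\ind_G^{\GL_n} A$ is itself finitely generated, which is where Grosshans-type results first enter.

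Next, exploit the Grosshans filtration: choose a Borel $B = TU \subset \GL_n$ and filter $A$ by $U$-weights, so that the associated graded $\gr A$ embeds as a sub-$G$-algebra of the hull $\hull(A) = (A \otimes k[G/U])^U$. The structure of $\hull(A)$ is well understood via Grosshans' theorem, and a filtered-to-graded spectral-sequence argument, combined with power reductivity of reductive groups, reduces finite generation of $H^*(G, A)$ to finite generation of $H^*(G, \hull(A))$, modulo a Noetherian-style condition on lifting cohomology classes from the graded object back to $A$. This step benefits from the good-filtration formalism: $\hull(A)$ has a good filtration, so higher cohomology behaves much more tractably than on a general $A$.

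The crux, and the main obstacle, is producing enough rational cohomology classes to establish this finite generation. Here the universal strict polynomial bifunctor cohomology classes of \cite{Touze} are essential: they provide, for each rational $G$-module $V$ and each Frobenius twist, compatible multiplicative cohomology classes in $H^*(\GL_n, \Gamma^d(V^{(r)}))$ by evaluating universal classes constructed in suitable $\Ext$-groups in the strict polynomial bifunctor category (such as Ext-groups out of $\Gamma^*(\gl^{(1)})$). Combined with the classical invariants, these classes exhibit the cohomology ring as finitely generated modulo nilpotents; a Noetherian argument then controls the nilpotent part. The hardest aspect is verifying that the bifunctor classes produce \emph{enough} information --- that they simultaneously detect non-nilpotence in all cohomological degrees, interact correctly with the Grosshans filtration, and allow the finite generation obtained on $\hull(A)$ to be transported back to $A$ itself.
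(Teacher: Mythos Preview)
Your outline captures the large-scale architecture --- reduction to $\GL_n$, Grosshans filtration, and the universal classes of \cite{Touze} --- but it omits the mechanism by which these pieces are actually assembled, and in places it misstates where the difficulty lies.

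First, finite generation of $H^*(G,\hull\gr A)$ is not the goal one reduces to: $\hull\gr A$ has a good filtration, so its higher cohomology vanishes and $H^0$ is finitely generated by classical invariant theory. The genuine obstacle is the passage from $\hull\gr A$ back to $\gr A$ and then to $A$. The paper handles the last step via the spectral sequence $E(A)$ associated to the Grosshans filtration, acted on by the trivial spectral sequence of the flat family $\A$ (which you do not mention); one needs $H^*(G,\A)\to H^*(G,\gr A)$ to be noetherian so that $E(A)$ stops.

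Second, and more seriously, you skip the Frobenius-kernel step entirely. The passage from $\hull\gr A$ to $\gr A$ is mediated by Mathieu's theorem, which supplies an $r$ with $x^{p^r}\in\gr A$ for all $x\in\hull\gr A$; this forces one to untwist via $G/G_r$ and study the Hochschild--Serre spectral sequence $H^i(G/G_r,H^j(G_r,\gr A))\Rightarrow H^{i+j}(G,\gr A)$. Friedlander--Suslin control $H^*(G_r,\gr A)$ as a module over $\bigotimes_i S^*((\gl_n^{(r)})^\#(2p^{i-1}))\otimes C$, and the whole point of the Touz\'e classes $c[m]\in H^{2m}(G,\Gamma^m(\gl_n^{(1)}))$ is that cup product with them factors the resulting map $H^0(G,\ldots)\to E_2^{0*}$ through $H^{\even}(G,\A)$. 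This is a precise linear-algebraic factorization, not a statement about ``finite generation modulo nilpotents''; nothing in the argument proceeds by separating a nilpotent part. Also, the classes live specifically in $H^*(\GL_n,\Gamma^m(\gl_n^{(1)}))$, not in $H^*(\GL_n,\Gamma^d(V^{(r)}))$ for arbitrary $V$. Without the Frobenius-kernel spectral sequence and this cup-product factorization, there is no bridge from the good-filtration input to control of $H^*(G,\gr A)$, and your sketch does not supply one.
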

The proof will be based on the `lifted' universal classes  \cite{Touze} constructed  by the junior author
for this purpose. Originally \cite{Touze} was the end of the proof, but for the purpose of exposition we
have changed the order.

If the field $k$ has characteristic zero, then the theorem just reiterates a standard fact in invariant
theory. Indeed the reductive group is then linearly reductive and rational
cohomology vanishes in higher degrees for any
linearly reductive group.

So we further assume $k$ has positive characteristic $p$. 
In this introduction we will also take $k$ algebraically closed. 
(One easily reduces to this case, cf.\ \cite[Lemma 2.3]{reductive}, \cite[I 4.13]{Jantzen}, \cite{Waterhouse}.)
We will say that $G$ acts on the algebra
$A$ if
$G$ acts
 rationally by $k$-algebra automorphisms. %And we say that $G$ acts on an $A$-module $M$
 %when it acts rationally on $M$ such that the structure map $A\otimes M\to M$ is a $G$-module map.

Let us say that $G$ has the finite generation property (FG),
or a positive solution to Hilbert's 14-th problem, if the following holds.
If  $G$ acts on a finitely generated commutative $k$-algebra $A$, then 
  the  ring of invariants $A^G=H^0(G,A)$ is finitely generated
as a $k$-algebra. Observe that, unlike Hilbert, we do \emph{not} require that $A$ is a domain.

It is obvious that (CFG) implies (FG). We will see that our main result can also be formulated as follows

\begin{Theorem}\label{FGCFG}
A linear algebraic  group scheme
$G$ over $k$ has property (CFG) if and only if it has property (FG).
\end{Theorem}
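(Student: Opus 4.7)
The implication (CFG)$\,\Rightarrow\,$(FG) is formal. Since the cup product makes $H^*(G,A)=\bigoplus_{i\ge 0}H^i(G,A)$ a graded-commutative $k$-algebra, a finite generating set can be refined to a finite set of homogeneous generators by taking homogeneous components. Once generators are homogeneous, only those of degree $0$ are needed to generate $A^G=H^0(G,A)$ as a $k$-algebra, because a monomial in homogeneous generators has total degree $0$ precisely when each factor has degree $0$.

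For the converse, I would combine Theorem~\ref{reductiveCFG} with the classical characterization (due to Popov, building on Nagata--Haboush) of the finite generation property: a linear algebraic group scheme $G$ over $k$ satisfies (FG) if and only if $G$ is reductive, in the sense that its reduced identity component $(G^0)_\red$ is reductive in the usual sense. Granted (FG), this identifies $G$ as reductive, and Theorem~\ref{reductiveCFG} then supplies (CFG). The reduction to algebraically closed $k$ has already been indicated in the introduction.

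The main obstacle is the mild mismatch of generality: Theorem~\ref{reductiveCFG} is stated for reductive linear algebraic groups, whereas (FG)$\,\Rightarrow\,$(CFG) here is claimed for all linear algebraic group schemes. To bridge this I would propagate (CFG) along the short exact sequence $1 \to (G^0)_\red \to G \to G/(G^0)_\red \to 1$ via the Hochschild--Serre spectral sequence $H^p(G/(G^0)_\red, H^q((G^0)_\red, A)) \Rightarrow H^{p+q}(G,A)$. Since $G/(G^0)_\red$ is a finite group scheme, its cohomology with coefficients in a finitely generated module is controlled by Evens--Venkov for the étale part and by Friedlander--Suslin for any infinitesimal contribution; combined with (CFG) for $(G^0)_\red$ this should force finite generation of the $E_2$-page, and hence of the abutment $H^*(G,A)$, by a standard Noetherianity argument on the associated graded.
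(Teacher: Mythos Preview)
Your reduction (FG)$\Rightarrow$(CFG) has a genuine gap: the subgroup $(G^0)_\red$ is in general \emph{not} normal in $G$, so the short exact sequence you propose does not exist and there is no Hochschild--Serre spectral sequence to run. A concrete counterexample (over algebraically closed $k$) is $G=\alpha_p\rtimes\Gm$, with $\Gm$ acting on $\alpha_p$ by the tautological character. Here $G$ is connected, $G_\red=(G^0)_\red=\Gm$ is reductive (so $G$ has (FG) by Waterhouse's criterion), but conjugating $(0,t)\in\Gm$ by $(a,1)\in\alpha_p$ gives $(a(1-t),t)$, which is not in $\Gm$ once $a\neq0$ and $t\neq1$. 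Thus your spectral sequence strategy cannot get off the ground for such $G$.

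The paper avoids this issue entirely by a different reduction. Rather than trying to build $G$ up from $(G^0)_\red$ via extensions, it uses the transfer principle (Lemma~\ref{mylemma}): once (CFG) is known for $\GL_n$, it passes down to every geometrically reductive subgroup scheme. Since (FG) is equivalent to geometric reductivity and every affine group scheme embeds in some $\GL_n$, this immediately gives (FG)$\Rightarrow$(CFG) for arbitrary linear algebraic group schemes, with all the hard work concentrated in the single case $G=\GL_n$. Even setting the normality problem aside, your final step (``finite generation of $E_2$ forces finite generation of the abutment'') is too quick: one needs the spectral sequence to stop, and arranging this requires a ring of permanent cycles over which some page is noetherian---precisely the delicate point the paper spends Sections~\ref{three}--\ref{four} and Part~II establishing.
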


Let us give some examples. The first example is a finite group $G$, viewed as a discrete algebraic group over $k$.
It is well known to have property (FG), \cite[Lemma 2.4]{reductive}, and the proof goes back to
Emmy Noether 1926 \cite{noether}. % She considers a special case.
Thus we recover the finite generation theorem of Evens, at least over our field $k$:
\begin{Theorem}[Evens 1961 \cite{Evens}]\label{CFGEvens}
A finite group has property (CFG), over $k$.
\end{Theorem}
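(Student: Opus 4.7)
The plan is to derive Theorem \ref{CFGEvens} as an immediate consequence of Theorem \ref{FGCFG}, so the work reduces to exhibiting property (FG) for a finite group $G$ regarded as a discrete (reduced) algebraic group over $k$.

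First I would invoke Theorem \ref{FGCFG}: since (CFG) and (FG) are equivalent for any linear algebraic group scheme over $k$, it suffices to check that a finite group has (FG). So let $A$ be a finitely generated commutative $k$-algebra on which the finite group $G$ acts by $k$-algebra automorphisms, and consider the subring of invariants $A^G$. One has to show $A^G$ is a finitely generated $k$-algebra.

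For this step I would quote the classical argument of Emmy Noether from 1926, as indicated in the text (see also \cite[Lemma 2.4]{reductive}). Recall the idea briefly: because $G$ is finite, $A$ is integral over $A^G$ (every $a\in A$ is a root of the monic polynomial $\prod_{g\in G}(T-g\cdot a)$ whose coefficients are $G$-invariant). Choose finitely many $k$-algebra generators $a_1,\dots,a_n$ of $A$, let $B\subset A^G$ be the $k$-subalgebra generated by all coefficients of these monic polynomials for the $a_i$; then $B$ is finitely generated and $A$ is a finite $B$-module. Since $B$ is noetherian and $A^G\subset A$ is a $B$-submodule of the finite $B$-module $A$, it is itself a finite $B$-module, hence a finitely generated $k$-algebra. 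Note that, as emphasised in the introduction, no domain hypothesis on $A$ is needed in this argument.

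Combining the two steps yields that any finite group has (CFG) over $k$. The only real content sits in Theorem \ref{FGCFG}; once that equivalence is granted, the Evens statement is genuinely free. The main obstacle, then, is not in this particular deduction but rather in having Theorem \ref{FGCFG} available in positive characteristic, which in turn relies on the main Theorem \ref{reductiveCFG} applied to an ambient reductive group in which $G$ can be embedded (or, more directly in the finite case, on the fact that the analogous transfer/integrality arguments go through for finite groups at the level of cohomology).
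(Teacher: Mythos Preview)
Your proposal is correct and matches the paper's approach exactly: the paper observes that a finite group has property (FG) by Emmy Noether's 1926 argument and then invokes Theorem~\ref{FGCFG} to deduce (CFG), noting explicitly that this yields a new (longer) proof of Evens' theorem since Theorem~\ref{FGCFG} does not rely on it. Your sketch of the Noether argument and the remark that no domain hypothesis is needed are also in line with the paper's discussion.
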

As our proof of Theorem \ref{FGCFG} does not rely on theorem \ref{CFGEvens}, we get a new proof of \ref{CFGEvens}, 
albeit much longer than the original proof.
Note that the setting of Evens is more general: Instead of a field
he allows an arbitrary noetherian base. This suggests a direction for further work.

If $G$ is a linear algebraic group over $k$, we write $G_r$ for its $r$-th Frobenius kernel,
the scheme theoretic kernel of the $r$-th iterate $F^r:G\to G^{(r)}$ of the Frobenius homomorphism \cite[I Ch. 9]{Jantzen}.
It is easy to see that $G_r$ has property (FG). More generally it is easy to see \cite[Lemma 2.4]{reductive}
that any finite group scheme over $k$ has
property (FG). (A group scheme is finite if its coordinate ring is a finite dimensional vector space.)
Indeed one has \cite[Theorem 3.5]{reductive}:
\begin{Theorem}[Friedlander and Suslin 1997 \cite{Friedlander-Suslin}]
\label{CFGFS}
A finite group scheme has property (CFG).
\end{Theorem}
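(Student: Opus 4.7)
The plan is to reduce (CFG) for a finite group scheme $G$ to (CFG) for a Frobenius kernel $\GL_{n,r}$, where the universal cohomology classes of \cite{Touze} can be brought to bear.

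First, a finite group scheme $G$ fits in an exact sequence $1 \to G^\circ \to G \to \pi_0(G) \to 1$, where $G^\circ$ is the infinitesimal identity component and $\pi_0(G)$ is an ordinary finite group. Using the Hochschild--Serre spectral sequence together with (FG) for both factors (elementary for finite group schemes by Noether), one reduces (CFG) for $G$ to the analogous statement for $G^\circ$ and for $\pi_0(G)$; the ordinary group case can be handled by invariant-theoretic arguments combined with a module-finite argument over the cohomology of the connected part. For the infinitesimal part, $G^\circ$ admits a closed embedding into some $\GL_{n,r}$: any faithful linear representation of the finite infinitesimal group scheme $G^\circ$ has image lying inside a Frobenius kernel of the ambient $\GL_n$. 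A Shapiro transfer then reduces (CFG) for $G^\circ$ acting on $A$ to (CFG) for $\GL_{n,r}$ acting on $\ind_{G^\circ}^{\GL_{n,r}} A$, which is a finitely generated rational $\GL_{n,r}$-algebra because $\GL_{n,r}/G^\circ$ is finite.

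Second, for $\GL_{n,r}$ acting on a finitely generated $A$, the universal classes of \cite{Touze} produce classes in $H^*(\GL_n, \Gamma^*(\gl^{(r)}))$, whose restriction to $\GL_{n,r}$ and pairing with the $\GL_{n,r}$-coaction on $A$ give enough classes to realize $H^*(\GL_{n,r}, A)$ as a finite module over a noetherian subring of the form $R \otimes A^{\GL_{n,r}}$. Here $R$ is a finitely generated polynomial subalgebra built from the universal classes, and $A^{\GL_{n,r}}$ is finitely generated by (FG). Finite generation of $H^*(\GL_{n,r}, A)$ as a $k$-algebra then follows by standard noetherian module arguments.

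The main obstacle, and the decisive technical input, is the construction of universal bifunctor cohomology classes in the required degrees together with the verification that they restrict to classes generating a polynomial subalgebra of finite codimension in $H^{\even}(\GL_{n,r}, k)$. This is precisely what \cite{Touze} supplies; once granted, the reductions above and the module-finiteness arguments are relatively formal, with the ordinary-group step requiring the most additional care in tracking finite generation through the Hochschild--Serre spectral sequence.
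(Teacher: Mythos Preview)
The paper does not give its own proof of this theorem. Theorem~\ref{CFGFS} is stated as a result quoted from Friedlander--Suslin \cite{Friedlander-Suslin}, and immediately after it the paper says explicitly: ``But we do not get a new proof of this theorem, as our proof of the main result relies heavily on the specific information in \cite[section~1]{Friedlander-Suslin}.'' So there is nothing to compare your proposal against in this paper; the theorem is input, not output.

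That said, your proposal has a structural problem worth flagging. You invoke the lifted classes of \cite{Touze} to handle $\GL_{n,r}$, but the whole point of those lifted classes $c[m]\in H^{2m}(G,\Gamma^m(\gl_n^{(1)}))$ is that they are needed for the \emph{reductive} case, where one must control a Hochschild--Serre spectral sequence whose $E_2$-page involves $G/G_r$-cohomology. For a Frobenius kernel $\GL_{n,r}$ itself, the original Friedlander--Suslin classes $e_r\in H^{2p^{r-1}}(G,\gl_n^{(r)})$ already suffice, and that is exactly what \cite{Friedlander-Suslin} does. Bringing in \cite{Touze} here is both unnecessary and logically backwards: the construction and analysis in \cite{Touze} presupposes the Friedlander--Suslin framework, and the paper's use of \cite{Touze} is to go \emph{beyond} finite group schemes, not to reprove the finite case.

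Your reduction steps are also sketchier than they should be. The passage from $G$ to $G^\circ$ and $\pi_0(G)$ via Hochschild--Serre is not a formal consequence of (CFG) for the two pieces; one needs a noetherian module argument to make the spectral sequence stop, and your remark that the \'etale part ``can be handled by invariant-theoretic arguments combined with a module-finite argument over the cohomology of the connected part'' conflates two separate issues (Evens for $\pi_0(G)$, and the spectral-sequence bookkeeping). If you want to write out a proof along these lines, follow \cite[\S1]{Friedlander-Suslin} directly rather than routing through \cite{Touze}.
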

But we do not get a new proof of this theorem, as our proof of the main result relies heavily on the
specific information in \cite[section 1]{Friedlander-Suslin}. Recall that the theorem of 
Friedlander and Suslin
was motivated
by a desire to get a theory of support varieties for infinitesimal group schemes. 
Our problem has the same origin. Then it started to get  a life of its own and became a conjecture.

It is a theorem of Nagata \cite{Nagata}, \cite[Ch. 2]{Springer} 
that geometrically reductive groups (or group schemes \cite{Borsari-Santos}) have property (FG).
(Springer \cite{Springer} deletes `geometrically' in the terminology.)
Conversely, it is elementary \cite[Th. 2.2]{reductive} that property (FG) implies geometric reductivity.
(Here it is essential that in propery (FG) one allows any finitely generated commutative $k$-algebra on which $G$ acts.)
So our main result states that property (CFG) is equivalent to geometric reductivity.

Now Haboush has shown \cite[II.10.7]{Jantzen} that reductive groups are geometrically reductive, and Popov \cite{PopovRed}
has shown that 
a linear algebraic group with property (FG) is reductive.  (Popov allows only reduced
algebras, so his result is even stronger.) 
Waterhouse has completed the classification by showing \cite{Waterhouse} that  
a linear algebraic group scheme $G$ (he calls it an `algebraic affine group scheme') is geometrically
reductive exactly when the connected component $G_\red^o$ of
its reduced subgroup $G_\red$ is reductive. 
So this is also a characterization of the $G$ with property (CFG).

Let us now give a consequence of (CFG).
 We say that $G$ acts on an $A$-module $M$
 when it acts rationally on $M$ such that the structure map $A\otimes M\to M$ is a $G$-module map.
\begin{Theorem}Let $G$ have property (CFG).
Let $G$ act on the finitely generated commutative $k$-algebra $A$ and on the noetherian $A$-module $M$.
Then $H^*(G,M)$ is a noetherian $H^*(G,A)$-module. In particular, if $G$ is reductive and $A$ has a good filtration, then
$H^*(G,M)$ is a noetherian $A^G$-module, $H^i(G,M)$ vanishes for large $i$, and $M$ has finite good filtration
dimension.
\end{Theorem}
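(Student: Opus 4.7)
For the first assertion, I would apply (CFG) to the square-zero extension $B := A \oplus M$ with multiplication $(a,m)(a',m') := (aa', am' + a'm)$. Since $M$ is finitely generated over $A$ and $A$ over $k$, $B$ is a finitely generated commutative $k$-algebra, and the given rational $G$-actions on $A$ and $M$ combine into a rational $G$-action on $B$ by algebra automorphisms. By (CFG) the ring $H^*(G,B)$ is noetherian. The split $G$-equivariant exact sequence $0 \to M \to B \to A \to 0$ induces a decomposition $H^*(G,B) = H^*(G,A) \oplus H^*(G,M)$ of graded $H^*(G,A)$-modules in which $H^*(G,M)$ is an ideal with $H^*(G,M)^2 = 0$ (because $M^2 = 0$ in $B$). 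Hence the $H^*(G,B)$-action on $H^*(G,M)$ factors through $H^*(G,A)$, and $H^*(G,M)$ — a finitely generated ideal of a noetherian ring — is noetherian as an $H^*(G,A)$-module.

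When $G$ is reductive and $A$ has a good filtration, Kempf vanishing gives $H^i(G,A) = 0$ for $i > 0$, so $H^*(G,A) = A^G$ is concentrated in cohomological degree zero. The first assertion then says $H^*(G,M)$ is a noetherian graded $A^G$-module, and any finitely generated graded module over a ring supported in degree zero is bounded in degree, so $H^i(G,M) = 0$ for $i \gg 0$.

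To upgrade the vanishing to finite good filtration dimension, I would apply the previous step with $A$ and $M$ replaced by $A' := A \otimes_k k[G/U]$ and $M' := M \otimes_k k[G/U]$, where $U$ is a maximal unipotent subgroup of $G$ acting diagonally. Grosshans' theorem makes $k[G/U]$ a finitely generated $k$-algebra with good filtration; Mathieu's tensor-product theorem then makes $A'$ a finitely generated $k$-algebra with good filtration, and $M'$ is noetherian over $A'$. Hence $H^i(G, M') = 0$ for $i \gg 0$. Combined with Kempf's identification $\Ext^i_G(V(\lambda), M) = H^i(B, k_{-\lambda} \otimes M)$ (with $B = TU$ a Borel) and the relation between $H^*(B, N)$ and $H^*(G, N \otimes k[G/U])$ via a Grothendieck-type spectral sequence, one should extract a single $n$ with $\Ext^i_G(V(\lambda), M) = 0$ for all $i > n$ and all dominant $\lambda$, which is exactly finite good filtration dimension of $M$.

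The step I expect to be the main obstacle is the last one. Each of the individual ingredients — Grosshans' finite generation, Mathieu's tensor theorem, and Kempf's identification of $G$-Ext with weighted cohomology of a Borel or its unipotent radical — is classical; but one has to assemble them for a noetherian rather than finite-dimensional module $M$ and extract a bound uniform in the dominant weight $\lambda$. This is where the specifics of the Grosshans hull, and the genuinely noetherian (not merely bounded) nature of the $H^*(G, M')$-module structure, carry the argument.
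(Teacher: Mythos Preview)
Your proposal is correct and matches the paper's own argument: the paper's proof consists of the two hints ``one puts an algebra structure on $A\oplus M$'' and ``uses that $A\otimes k[G/U]$ also has a good filtration,'' which is exactly your square-zero extension followed by replacing $(A,M)$ by $(A\otimes k[G/U],\,M\otimes k[G/U])$.

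One simplification for your final step: you do not need Kempf's identification with Borel cohomology or any spectral sequence. Since $k[G/U]\cong\bigoplus_{\lambda}\nabla(\lambda)$ as a $G$-module (sum over dominant weights), the vanishing $H^i(G,M\otimes k[G/U])=0$ for $i>n$ unpacks directly as $H^i(G,M\otimes\nabla(\lambda))=\Ext^i_G(\Delta(-w_0\lambda),M)=0$ for all $i>n$ and all dominant $\lambda$, which is the cohomological criterion for good filtration dimension $\le n$. The uniformity in $\lambda$ that you flag as the obstacle is thus automatic from the direct-sum decomposition; no further assembly is required.
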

\paragraph{Proof}See \cite[Lemma 3.3, proof of 4.7]{reductive}.
 One puts an algebra structure on $A\oplus M$ and uses that $A\otimes k[G/U]$ also has a good filtration. 
\qed

As special case we mention
\begin{Theorem}\label{SvdKthm}Let $G=\GL_n$, $n\geq1$.
Let $G$ act on the finitely generated commutative $k$-algebra $A$ and on the noetherian $A$-module $M$.
If $A$ has a good filtration, then
$H^*(G,M)$ is a noetherian $A^G$-module, $H^i(G,M)$ vanishes for large $i$, and $M$ has finite good filtration
dimension.
\end{Theorem}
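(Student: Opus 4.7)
The plan is to derive Theorem~\ref{SvdKthm} as a direct corollary of the immediately preceding theorem. The group $\GL_n$ is a connected reductive linear algebraic group over $k$, so by the main result Theorem~\ref{reductiveCFG} it has property (CFG). Both hypotheses of the ``in particular'' clause of the preceding theorem---namely that $G$ is reductive and that $G$ has (CFG)---are thereby satisfied, and the three assertions of Theorem~\ref{SvdKthm} follow at once.

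To unpack the reduction slightly: once (CFG) is available, the noetherianness of $H^*(G,M)$ over $H^*(G,A)$ is immediate from the preceding theorem. The refinement to a noetherian $A^G$-module structure uses the standard fact that if $A$ carries a good filtration and $G$ is reductive, then $H^i(G,A)$ vanishes for $i>0$, so the edge map identifies $H^*(G,A)$ with $A^G = H^0(G,A)$. Vanishing of $H^i(G,M)$ in high degrees and finiteness of the good filtration dimension of $M$ then follow from the algebra trick recalled in the proof of the preceding theorem, namely putting an algebra structure on $A \oplus M$ and exploiting that $A \otimes k[G/U]$ inherits a good filtration, as in \cite[Lemma 3.3, proof of 4.7]{reductive}.

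Thus no genuine obstacle remains at the level of Theorem~\ref{SvdKthm} itself: all of the real work is absorbed into Theorem~\ref{reductiveCFG}, whose proof via the bifunctor cohomology classes of \cite{Touze} supplies the (CFG) input, while the good filtration formalism for reductive groups handles the ``in particular'' refinement. The theorem is singled out here only because the $\GL_n$ case was historically the motivating one.
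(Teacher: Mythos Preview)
Your deduction is circular. Immediately after stating Theorem~\ref{SvdKthm}, the paper says explicitly that this theorem ``is proved directly in \cite{Srinivas vdK}'' and that ``It will be used in our proof of the main theorems.'' Indeed, in the sketch of the proof of Theorem~\ref{reductiveCFG} in the introduction, Theorem~\ref{SvdKthm} is invoked to conclude that page~2 of the Hochschild--Serre spectral sequence has only finitely many nonzero columns, and later in the paper the main result of \cite{Srinivas vdK} is used again to establish that $R=H^*(G_r,\gr A)^{(-r)}$ has finite good filtration dimension. So Theorem~\ref{SvdKthm} is logically \emph{prior} to Theorem~\ref{reductiveCFG} in this paper, not a consequence of it. Deriving (CFG) for $\GL_n$ from Theorem~\ref{reductiveCFG} and then feeding that back into the preceding theorem to obtain Theorem~\ref{SvdKthm} presupposes exactly what you are trying to prove.

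Your final remark, that the theorem ``is singled out here only because the $\GL_n$ case was historically the motivating one,'' is therefore mistaken: it is singled out because it is an independent ingredient, established in \cite{Srinivas vdK} by a direct argument (functorial resolution of the ideal of the diagonal in a product of Grassmannians) that does not rely on (CFG). If you want to give a self-contained account, you must supply that direct argument rather than appeal to Theorem~\ref{reductiveCFG}.
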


This theorem is proved directly in \cite{Srinivas vdK}, with functorial resolution of the ideal of the diagonal in
a product of Grassmannians. It will be used in our proof of the main theorems.

Now let us start discussing the proof of the main result.
First of all one has the following variation on the ancient transfer principle \cite[Chapter Two]{Grosshans book}.

\def\citemylemma{\cite[Lemma 3.7]{reductive}}
\begin{Lemma}[\citemylemma]\label{mylemma}
Let $G$ be a linear algebraic group over $k$ with property (CFG).
Then any geometrically reductive subgroup scheme $H$ of $G$ also
has property (CFG).
\end{Lemma}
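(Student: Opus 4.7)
The plan is a Shapiro-style argument via induction from $H$ to $G$. Given a finitely generated commutative $H$-algebra $A$, I would form the induced $G$-algebra
\[
B := \ind_H^G A = (k[G]\otimes A)^H,
\]
where $H$ acts on $k[G]\otimes A$ diagonally (by right translation on $k[G]$ and by the given action on $A$) and $G$ acts on $B$ through left translation on $k[G]$. Finite generation of $B$ as a $k$-algebra follows from Nagata's theorem applied to the $H$-action on the finitely generated commutative algebra $k[G]\otimes A$; only property (FG) for the geometrically reductive $H$ is used. Applying (CFG) for $G$ to the $G$-algebra $B$ then yields that $H^*(G,B)$ is finitely generated over $k$.

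The crux is the identification of graded rings $H^*(H,A) \cong H^*(G,B)$. Put $M = k[G]\otimes A$, a $(G\times H)$-equivariant commutative algebra with commuting rational actions as above, and compute $H^*(G\times H,M)$ in two ways via the Lyndon--Hochschild--Serre spectral sequences for the two direct factors viewed as normal subgroups of $G\times H$. The spectral sequence $H^p(H, H^q(G, M)) \Rightarrow H^{p+q}(G\times H, M)$ collapses at $q=0$ to $H^*(H, A)$: since $k[G]$ is a $G$-injective module and $G$ acts only on the $k[G]$-factor, $M$ is $G$-acyclic with $M^G = A$. Analogously, the spectral sequence $H^p(G, H^q(H, M)) \Rightarrow H^{p+q}(G\times H, M)$ collapses to $H^*(G, B)$ provided $M$ is $H$-acyclic. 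Both spectral sequences are multiplicative because $M$ is a $(G\times H)$-equivariant commutative algebra, so the resulting isomorphism respects the ring structure and the finite generation of $H^*(G,B)$ transfers to $H^*(H,A)$.

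The main obstacle is therefore the $H$-acyclicity of $M = k[G]\otimes A$ for the diagonal $H$-action, equivalently the exactness of $\ind_H^G$, i.e.\ the affineness of $G/H$. Since $G$ has property (CFG) it is geometrically reductive, and together with the hypothesized geometric reductivity of $H$ this affineness can be established by a Matsushima-type argument, passing to reduced connected components as necessary. Once the acyclicity is in place the two-sided collapse described above closes the proof.
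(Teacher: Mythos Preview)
The paper does not supply its own proof; the lemma is quoted from \cite[Lemma 3.7]{reductive} and introduced as ``a variation on the ancient transfer principle'' of \cite[Chapter Two]{Grosshans book}. Your Shapiro/induction argument is exactly that transfer principle and is correct: form $B=\ind_H^G A$, use Nagata for the geometrically reductive $H$ to see that $B$ is finitely generated, apply (CFG) for $G$ to $B$, and identify $H^*(G,B)\cong H^*(H,A)$ as graded rings. Your double spectral sequence route through $H^*(G\times H,M)$ is a clean way to obtain the ring isomorphism; one can also argue more directly from the composite-functor identity $(-)^G\circ\ind_H^G=(-)^H$ once one knows that $\ind_H^G$ is exact and preserves injectives (the latter because it is right adjoint to the exact restriction functor).

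The one step you correctly flag as the real content---affineness of $G/H$, equivalently exactness of $\ind_H^G$ \cite[I.5.13]{Jantzen}---is the positive-characteristic Matsushima criterion. For reduced groups this is Richardson's theorem; the extension to subgroup \emph{schemes} of a geometrically reductive $G$ requires the further reductions you allude to (passing to $G_\red^o$ and handling the finite quotient), and is where a reader would want a precise reference rather than a sketch. Since the present paper only cites the lemma, there is no in-text proof to compare against; your outline matches the standard argument one expects in \cite{reductive}.
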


As every geometrically reductive linear algebraic group scheme is a subgroup scheme of $\GL_n$ for $n$ sufficiently
large, we only have to look at the $\GL_n$ to prove the main theorems, Theorem \ref{reductiveCFG} and Theorem \ref{FGCFG}.
Therefore we further assume $G=\GL_n$ with $n>1$. (Or $n\geq p$ if you wish.)
(In \cite{cohGrosshans} we used $\SL_n$ instead of $G=\GL_n$, but also explained it hardly makes any difference.)

We have $G$ act on $A$ and wish to show $H^*(G,A)$ is finitely generated. If $A$ has a good filtration \cite{Jantzen}, then there is
no higher cohomology and invariant theory (Haboush) does the job.
A general $A$ has been related to one with a good filtration by Grosshans.
He defines a filtration $A_{\leq 0}\subseteq A_{\leq 1}\cdots$
on $A$ and embeds the associated graded $\gr A$ into an algebra with a good
filtration $\hull\gr A$. He shows that $\gr A$ and $\hull\gr A$ are also finitely generated and that there is a flat
family parametrized by the affine line with special fiber $\gr A$ and general fiber $A$.
We write $\A$ for the coordinate ring of the family. It is a graded algebra and one has natural homomorphisms
$\A\to \gr A$, $\A\to A$.
Mathieu has shown \cite{Mathieu G}, cf.\ \cite[Lemma 2.3]{cohGrosshans}, that there is an $r>0$ so that $x^{p^r}\in \gr A$
for every $x\in \hull\gr A$. We have no bound on $r$, which is the main reason that our results are only qualitative.
One sees that $\gr A$ is a noetherian module over the $r$-th Frobenius twist $(\hull\gr A)^{(r)}$
of $\hull\gr A$. So we do not quite have the situation of theorem \ref{SvdKthm}, but it is close.
We have to untwist. Untwisting involves $G^{(r)}=G/G_r$ and we end up looking at the Hochschild--Serre 
spectral sequence
$$E_2^{ij}=H^i(G/G_r,H^j(G_r,\gr A))\Rightarrow H^{i+j}(G,\gr A).$$
One may write $H^i(G/G_r,H^j(G_r,\gr A))$ also as $H^i(G,H^j(G_r,\gr A)^{(-r)})$.
By Friedlander and Suslin $H^*(G_r,\gr A)^{(-r)}$ is a noetherian module over the graded algebra
$\bigotimes_{i=1}^rS^*((\gl_n)^\#(2p^{i-1}))\otimes \hull\gr A$.
Here the $\#$ refers to taking a dual, $S^*$ refers to a symmetric algebra over $k$,
and the $(2p^{i-1})$ indicates in what degree one puts a copy of
the dual of the adjoint representation  $\gl_n$. By the fundamental work \cite{Akin} 
of Akin, Buchsbaum, Weyman, which is also of essential importance in \cite{Srinivas vdK}, one knows that 
$\bigotimes_{i=1}^rS^*((\gl_n)^\#(2p^{i-1}))\otimes \hull\gr A$ has a good filtration.
So $H^*(G_r,\gr A)^{(-r)}$ has finite good filtration dimension and page 2 of our Hochschild--Serre
spectral sequence is noetherian over its first column $E^{0*}_2$.
By Friedlander and Suslin  $H^*(G_r,\gr A)^{(-r)}$ is a finitely generated algebra
and by invariant theory $E^{0*}_2$
is thus finitely generated, so $E^{**}_2$ is finitely generated. The spectral sequence is one of graded commutative
differential graded algebras
in characteristic $p$, so the $p$-th power of an even cochain in a page passes to the next page. It easily follows
that all pages are finitely generated. As page 2 has only finitely many columns by \ref{SvdKthm}, cf.~\cite[2.3]{Srinivas vdK}, 
this explains why the abutment
$H^*(G,\gr A)$ is finitely generated. We are getting closer to $H^*(G,A)$.

The filtration $A_{\leq 0}\subseteq A_{\leq 1}\cdots$ induces a filtration
of the Hochschild complex \cite[I.4.14]{Jantzen} whence a spectral sequence
$$E(A):E_1^{ij}=H^{i+j}(G,\gr_{-i}A)\Rightarrow H^{i+j}(G, A).$$
It lives in the second quadrant, but as 
$E^{**}_1$ is a finitely generated $k$-algebra this causes no difficulty with
convergence: given $m$ there will be only finitely many nonzero $E_1^{m-i,i}$. 
(Compare \cite[4.11]{cohGrosshans}. Note that in \cite{cohGrosshans}
the $E_1$ page is mistaken for an $E_2$ page.) All pages are again finitely generated, so we would like the spectral sequence to
stop, meaning that $E_s^{**}=E_\infty^{**}$ for some finite~$s$. There is a standard method to achieve this \cite{Evens}, \cite{Friedlander-Suslin}. One must find a `ring of operators' acting on the spectral sequence and show that some page is 
a noetherian module for the chosen ring of operators. As the ring of operators we take $H^*(G,\A)$.
Indeed $E(A)$ is acted on by the trivial 
spectral sequence $E(\A)$ whose pages equal $H^*(G,\A)$, see \cite[4.11]{cohGrosshans}. And $H^*(G,\A)$
also acts on our Hochschild--Serre spectral sequence through its abutment. 
If we can show that one of the pages of the Hochschild--Serre  spectral sequence
is a noetherian module over $H^*(G,\A)$, then that will do the trick, as then the abutment  $H^*(G,\gr A)$ is noetherian
by \cite[Lemma 1.6]{Friedlander-Suslin}. And this abutment is the first page of $E(A)$.

Now we are in a situation similar to the one encountered by Friedlander and Suslin. Their problem was `surprisingly
elusive'. To make their breakthrough they had to invent strict polynomial functors. Studying the homological algebra of
 strict polynomial functors they found universal cohomology classes $e_r\in H^{2p^{r-1}}(G, \gl_n^{(r)})$
 with nontrivial restriction to $G_1$. That was enough to get through.
 We faced a similar bottleneck. We know from invariant theory and from \cite{Friedlander-Suslin} that page 2 of our   
 Hochschild--Serre 
 spectral sequence is noetherian over $H^0(G,(\bigotimes_{i=1}^rS^*((\gl_n)^\#(2p^{i-1}))\otimes \A)^{(r)})$.
 But we want it to be noetherian over $H^*(G,\A)$. So if we could factor the homomorphism
 $H^0(G,(\bigotimes_{i=1}^rS^*((\gl_n)^\#(2p^{i-1}))\otimes \A)^{(r)})\to E^{0*}_2$ through $H^*(G,\A)$, then that would do it.
 The universal classes $e_j$ provide such a factorization on some summands, but they do not seem to help on the rest.
 One would like to have universal classes in more degrees so that one can map every
 summand of the form
 $H^0(G,(\bigotimes_{i=1}^rS^{m_i}((\gl_n)^\#(2p^{i-1}))\otimes \A)^{(r)})$ into the appropriate $H^{2m}(G,\A)$, or even into
  $H^{2m}(G,\A^{G_r})$.
 The dual of $S^{m_i}((\gl_n)^\#)^{(r)}$ is $\Gamma^{m_i} (\gl_n^{(r)})$. 
 Thus one seeks nontrivial classes in $H^{2mp^{i-1}}(G,\Gamma^m (\gl_n^{(r)}))$, to take cup product with. It turns out that
 $r=i=1$ is the crucial case and we seek nontrivial classes $c[m]\in H^{2m}(G,\Gamma^m (\gl_n^{(1)}))$.
 The construction of such classes $c[m]$ has been a sticking point at least since 2001.
 In \cite{cohGrosshans} they were constructed for $\GL_2$, but one needs them for $\GL_n$ with $n$ large.
 The strict polynomial functors of Friedlander and Suslin do not provide a natural home for this problem, but the
 strict polynomial bifunctors \cite{Franjou-Friedlander} of Franjou and Friedlander do.
 
 When the junior author found \cite{Touze}
 a construction of nontrivial `lifted' classes $c[m]$, this finished a proof of the conjecture.
 We present two proofs. The first proof continues the investigation of bifunctor cohomology in \cite{Touze}
 and establishes properties of the $c[m]$ analogous to those employed in \cite{cohGrosshans}.
 Then the result follows as in the proof in \cite{cohGrosshans} for $\GL_2$. 
 As a byproduct one also obtains extra bifunctor cohomology classes and relations between them.
 The second proof needs no more properties of the classes $c[m]$ than those established in \cite{Touze}.
 Indeed \cite{Touze} stops exactly where the two arguments start to diverge.
 The second proof does not quite factor the homomorphism
 $H^0(G,(\bigotimes_{i=1}^rS^*((\gl_n)^\#(2p^{i-1}))\otimes \A)^{(r)})\to E^{0*}_2$ through $H^*(G,\A)$,
 but argues by induction on $r$, returning to \cite[section 1]{Friedlander-Suslin} with the new classes in hand.
It is not hard to guess which author contributes which proof. The junior author goes first.

\section{Acknowledgements}The senior author thanks MATPYL, ``Math\'ematiques des Pays de Loire'', 
for supporting a visit to  Nantes during most of January 2008.
There he stressed that any construction of non-zero $c[m]$ regardless
of their properties should be useful -- and this has indeed turned out to be the case.
\part{The first proof}
\section{Main theorem and cohomological finite generation}\label{three}

We work over a field $k$ of positive characteristic $p$. We keep the notations
of \cite{Touze}. In particular, $\mathcal{P}_k(1,1)$ denotes the category of
strict polynomial bifunctors of \cite{Franjou-Friedlander}.
The main result of part I is theorem \ref{thm-cl-univ}, which states the existence of
classes in the cohomology of the bifunctor $\Gamma^*(gl^{(1)})$. By
\cite[Thm 1.3]{Touze}, the cohomology of a bifunctor $B$ is related to the
cohomology of $GL_{n,k}$ with coefficients in the rational representation 
$B(k^n,k^n)$ by a map $\phi_{B,n}:H^*_\mathcal{P}(B)\to H^*(GL_{n,k},B(k^n,k^n))$
(natural in $B$ and compatible with cup products). So our main result yields
classes in the cohomology of $GL_{n,k}$, actually more classes 
(and more relations between them) than originally needed 
\cite[Section 4.3]{cohGrosshans} 
for the proof of 
the cohomological finite generation conjecture.

\begin{Theorem}\label{thm-cl-univ}
Let $k$ be a field of characteristic $p>0$.
There are maps $\psi_\ell: \Gamma^\ell H^*_{\mathcal{P}}(gl^{(1)})\to H^*_{\mathcal{P}}(\Gamma^\ell(gl^{(1)}))$, $\ell\ge 1$ 
such that
\begin{enumerate}
\item $\psi_1$ is the identity map.
\item For all $\ell\ge 1$ and for all $n\ge p$,  the composite 
$$\Gamma^\ell H^*_{\mathcal{P}}(gl^{(1)})\xrightarrow[]{\psi_\ell} H^*_{\mathcal{P}}(\Gamma^\ell(gl^{(1)}))\xrightarrow[]{\phi_{\Gamma^\ell(gl^{(1)}),n}} 
H^*(GL_{n,k},\Gamma^\ell(\mathfrak{gl}_n^{(1)}))$$
is injective. In particular, for all $\ell\ge 1$, $\psi_\ell$ is injective.
\item For all positive integers $\ell,m$, there are commutative diagrams
$$\xymatrix{
H^*_{\mathcal{P}}(\Gamma^{\ell+m}(gl^{(1)}))\ar@{->}[rr]^-{\Delta_{\ell,m\,*}}&& H^*_{\mathcal{P}}(\Gamma^{\ell}(gl^{(1)})\otimes\Gamma^m(gl^{(1)}))\\
\Gamma^{\ell+m}H^*_{\mathcal{P}}(gl^{(1)})\ar@{^{(}->}[rr]^-{\Delta_{\ell,m}}\ar@{->}[u]^{\psi_{\ell+m}}&& 
\Gamma^{\ell}H^*_{\mathcal{P}}(gl^{(1)})\otimes\Gamma^m H^*_{\mathcal{P}}(gl^{(1)})\ar@{->}[u]^{\psi_{\ell}\cup\psi_{m}}\;,
}$$
and
$$\xymatrix{
H^*_{\mathcal{P}}(\Gamma^{\ell}(gl^{(1)})\otimes\Gamma^m(gl^{(1)}))\ar@{->}[rr]^-{m_{\ell,m\,*}}&& H^*_{\mathcal{P}}(\Gamma^{\ell+m}(gl^{(1)}))\\
\Gamma^{\ell}H^*_{\mathcal{P}}(gl^{(1)})\otimes\Gamma^m H^*_{\mathcal{P}}(gl^{(1)})\ar@{->}[u]^{\psi_{\ell}\cup\psi_{m}}\ar@{->}[rr]^-{m_{\ell,m}}&&
\Gamma^{\ell+m}H^*_{\mathcal{P}}(gl^{(1)})\ar@{->}[u]^{\psi_{\ell+m}}\;,
}$$
where $m_{\ell,m}$ and $\Delta_{\ell,m}$ denote the maps induced by the multiplication 
$\Gamma^\ell\otimes\Gamma^m \to \Gamma^{\ell+m}$ and the diagonal $\Gamma^{\ell+m}\to \Gamma^\ell\otimes\Gamma^m$.
\end{enumerate} 
\end{Theorem}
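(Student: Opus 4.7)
The plan is to construct $\psi_\ell$ on divided-power monomial generators and then verify the required properties. Set $V := H^*_{\mathcal{P}}(gl^{(1)})$. By the computations of \cite{Touze}, $V$ is concentrated in even degrees and admits an explicit basis, so $\Gamma^\ell V$ has a vector-space basis consisting of divided-power monomials $v_1^{[a_1]}\cdots v_r^{[a_r]}$ with $v_i\in V$ and $\sum a_i = \ell$. The two commutative diagrams in property~(3) essentially force the definition on products, so the real task is to produce for each $v\in V$ and each $a\ge 1$ a coherent lift of $v^{[a]}$.

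For $a=1$ take the tautological lift $v$, and for $a\ge 2$ invoke the main construction of \cite{Touze}: when $v$ is the canonical generator of $V$ in degree $2$ this produces the class $c[a] \in H^{2a}_{\mathcal{P}}(\Gamma^a(gl^{(1)}))$, and by tracking that the construction depends naturally on the input class (or by cupping $c[a]$ against the higher-degree generators of $V$) one obtains a class $\widetilde{v}^{[a]} \in H^*_{\mathcal{P}}(\Gamma^a(gl^{(1)}))$ for every $v \in V$. One then defines
$$\psi_\ell(v_1^{[a_1]}\cdots v_r^{[a_r]}) := m_{a_1,\ldots,a_r,*}\bigl(\widetilde{v_1}^{[a_1]}\cup\cdots\cup \widetilde{v_r}^{[a_r]}\bigr),$$
where $m_{a_1,\ldots,a_r,*}$ is the iterated multiplication on $\Gamma^*$. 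The principal consistency check here is that this prescription respects the divided-power relation $v^{[a]}\cdot v^{[b]} = \binom{a+b}{a} v^{[a+b]}$; this must be verified at the level of the $\widetilde{v}^{[a]}$ and is the main technical point in the definition.

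Property~(1) is immediate, and property~(3) then follows from the Hopf-algebra structure of $\Gamma^*$. Compatibility with $m_{\ell,m}$ reduces to associativity of the iterated multiplication $m_*$ combined with associativity of the cup product applied to the factors $\widetilde{v_i}^{[a_i]}$. Compatibility with $\Delta_{\ell,m}$ uses that $\Delta$ is a morphism of bifunctors, so $\Delta_{*}$ commutes with cup products through the Künneth-type map, together with the dual statement that $\Delta_{j,a-j,*}(\widetilde{v}^{[a]}) = \sum \widetilde{v}^{[j]}\cup \widetilde{v}^{[a-j]}$, which encodes the coproduct of divided powers and should be built into the lift.

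The main obstacle is property~(2), injectivity. My approach would be to restrict $\phi_{\Gamma^\ell(gl^{(1)}),n}$ further to a diagonal torus $T\subset GL_{n,k}$ and decompose $\Gamma^\ell(\mathfrak{gl}_n^{(1)})$ into $T$-weight spaces. Each divided-power monomial $\prod v_i^{[a_i]}$ should project nontrivially onto a specific weight component, where it identifies with an iterated product of the Friedlander--Suslin classes of \cite{Friedlander-Suslin}. The hypothesis $n\ge p$ guarantees enough weight-space separation: without it one risks collisions that could produce cancellations under the symmetrization implicit in $m_*$. The delicate part is verifying that the $m_{a_1,\ldots,a_r,*}$-projections of cup products of $\widetilde{v_i}^{[a_i]}$ remain linearly independent after restriction to $T$; this is where I expect the bulk of the work to lie, and where the specific form of the lifts $\widetilde{v}^{[a]}$ from \cite{Touze} -- in particular their nontriviality on the relevant weight components -- becomes essential.
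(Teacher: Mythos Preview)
Your proposal has two genuine gaps.

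\textbf{The construction is circular.} You propose to build $\psi_\ell$ out of the classes $c[a]$ of \cite{Touze}, but in this paper those classes (with their full compatibility properties) are \emph{consequences} of Theorem~\ref{thm-cl-univ}, not inputs to it: see Corollary~\ref{cor-univ-class}, where $c[m]$ is defined as $(\phi\circ\psi_m)(b[1]^{\otimes m})$. What \cite{Touze} supplies on its own is only the weaker Theorem~\ref{divided powers}: classes $c[m]$ whose image under the \emph{full} diagonal $\Gamma^m\to\otimes^m$ is $c[1]^{\cup m}$. Your scheme needs the partial-diagonal relation $\Delta_{j,a-j,*}(\widetilde v^{[a]})=\widetilde v^{[j]}\cup\widetilde v^{[a-j]}$ and the multiplicative relation $m_{a,b,*}(\widetilde v^{[a]}\cup\widetilde v^{[b]})=\binom{a+b}{a}\widetilde v^{[a+b]}$, and you acknowledge these as the ``main technical point'' without indicating how to establish them. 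The paper avoids this entirely: it builds $\psi_\ell$ directly at the cochain level, as a $\mathfrak S_\ell$-equivariant map $\vartheta_\ell$ into the column of index~$0$ of the bicomplex $H^0_{\mathcal P}(A(J_\ell))$, and then verifies the diagrams in~(3) by identifying explicit cocycles (Lemmas~\ref{lm-cup-prod-bicompl}--\ref{lm-repres-cup-bicompl}). The equivariance of $\vartheta_\ell$ is what replaces your consistency check.

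\textbf{The injectivity argument via a torus cannot work.} A diagonal torus $T\subset GL_{n,k}$ is linearly reductive, so $H^i(T,M)=0$ for all $i>0$; restricting to $T$ annihilates every positive-degree class and therefore cannot detect injectivity. The paper instead post-composes with $\Delta_{1,\dots,1\,*}$ and uses part~(3) to identify the resulting map with an iterated cup product $H^*_{\mathcal P}(gl^{(1)})^{\otimes\ell}\to H^*(GL_{n,k},(\mathfrak{gl}_n^{(1)})^{\otimes\ell})$. Injectivity is then checked by restricting not to a torus but to an infinitesimal one-parameter subgroup $\mathbb G_{a,1}\hookrightarrow GL_{n,k}$ via $\exp_\alpha$ for a $p$-nilpotent $\alpha$ (this is where $n\ge p$ enters). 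Since $H^{\mathrm{even}}(\mathbb G_{a,1},k)=k[x_1]$ and, by \cite{SFB}, $e_1(i)$ restricts to $x_1^i\otimes(\alpha^{(1)})^i$, the basis elements $\bigotimes_\ell e_1(i_\ell)$ land on linearly independent classes $x_1^{\sum i_\ell}\otimes\bigotimes_\ell(\alpha^{(1)})^{i_\ell}$.
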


As a consequence, we obtain that \cite[Th 4.4]{cohGrosshans} is valid for any value of $n$:

\begin{Corollary}\label{cor-univ-class} Let $k$ be a field of positive characteristic. For all $n>1$, there are classes 
$c[m]\in H^{2m}(GL_{n,k},\Gamma^m(\mathfrak{gl}_n^{(1)}))$ such that
\begin{enumerate}
\item  $c[1]$ is the Witt vector class $e_1$,
\item  $\Delta_{i,j\,*}(c[i+j])=c[i]\cup c[j]$ for $i,j\ge 1$.
\end{enumerate}
\end{Corollary}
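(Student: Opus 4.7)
The plan is to define bifunctor cohomology classes $\tilde c[m]\in H^{2m}_{\mathcal{P}}(\Gamma^m(gl^{(1)}))$ first and then push them to $GL_{n,k}$-cohomology via $\phi$. The starting ingredient is a lift $\tilde e_1\in H^2_{\mathcal{P}}(gl^{(1)})$ of the Witt vector class $e_1$, which is produced by the construction of \cite{Touze} in such a way that $\phi_{gl^{(1)},n}(\tilde e_1)=e_1$. Since $\tilde e_1$ sits in even degree, the divided power $\gamma_m(\tilde e_1)\in \Gamma^m H^*_{\mathcal{P}}(gl^{(1)})$ is well defined, and I would set
\[
\tilde c[m]\;:=\;\psi_m\bigl(\gamma_m(\tilde e_1)\bigr),\qquad c[m]\;:=\;\phi_{\Gamma^m(gl^{(1)}),n}(\tilde c[m]).
\]

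Property~(1) is immediate: item~1 of Theorem~\ref{thm-cl-univ} gives $\psi_1=\id$, whence $\tilde c[1]=\tilde e_1$ and $c[1]=\phi_{gl^{(1)},n}(\tilde e_1)=e_1$. For property~(2), I would apply the first commutative square of item~3 to $\gamma_{i+j}(\tilde e_1)$. The comultiplication on the divided power algebra $\Gamma^*(V)$ of a graded vector space satisfies the standard identity $\Delta_{i,j}(\gamma_{i+j}(x))=\gamma_i(x)\otimes\gamma_j(x)$ for any even-degree element $x\in V$, so chasing this element through the diagram yields
\[
\Delta_{i,j\,*}(\tilde c[i+j]) \;=\; (\psi_i\cup\psi_j)\bigl(\gamma_i(\tilde e_1)\otimes\gamma_j(\tilde e_1)\bigr) \;=\; \tilde c[i]\cup \tilde c[j].
\]
Applying $\phi$ to both sides and invoking the fact that $\phi$ is natural in the bifunctor variable and compatible with cup products \cite[Thm 1.3]{Touze} converts this into $\Delta_{i,j\,*}(c[i+j])=c[i]\cup c[j]$.

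There is no serious obstacle in deducing the corollary: it is a formal consequence of items~1 and~3 of Theorem~\ref{thm-cl-univ}, combined with the standard divided-power coalgebra identity and the existence of the lift $\tilde e_1$ from \cite{Touze}. All the real difficulty lies upstream, in producing the maps $\psi_\ell$ and verifying the first commutative square of item~3. Observe in particular that the second square of item~3 and the injectivity statement of item~2 are not used here, and the assumption $n\ge p$ of item~2 is not needed either, since the corollary only demands the existence of the classes with the stated compatibilities and does not assert non-triviality beyond $c[1]=e_1$.
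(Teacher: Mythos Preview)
Your proof is correct and follows essentially the same approach as the paper: define $c[m]$ as $\phi_{\Gamma^m(gl^{(1)}),n}\circ\psi_m$ applied to the $m$-fold divided power of a bifunctor lift of the Witt vector class, then read off property~(2) from the first commutative square in Theorem~\ref{thm-cl-univ}(3). Your element $\gamma_m(\tilde e_1)$ is exactly the paper's $b[1]^{\otimes m}$ (for even-degree $x$ the divided power $\gamma_m(x)$ \emph{is} $x^{\otimes m}$ inside $\Gamma^m$), and you actually spell out the verification of property~(2) more explicitly than the paper does.

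One small point: you assert that \cite{Touze} produces $\tilde e_1$ with $\phi_{gl^{(1)},n}(\tilde e_1)=e_1$ for all $n>1$, but the fact that $\phi_{gl^{(1)},n}$ is an isomorphism is only stated for $n\ge p$. The paper handles this by first invoking \cite[Lemma~1.5]{Touze} to reduce the whole statement to the range $n\ge p$, and only then taking $b[1]$ as the preimage of $e_1$. Your exposition skips this reduction; it is harmless (the bifunctor class is $n$-independent and the $\phi$'s are compatible across $n$), but strictly speaking you should either cite that lemma or note why the single lift $\tilde e_1$ hits the Witt vector class for the small values $1<n<p$ as well.
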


\begin{proof} Arguing as in \cite[Lemma 1.5]{Touze} we notice that it suffices to prove the statement when $n\ge p$. 
By \cite[Th 1.3]{Touze}, we have morphisms 
$$\phi_{\Gamma^m(gl^{(1)}),n}: H^*_\mathcal{P}(\Gamma^m(gl^{(1)}))\to H^*(GL_{n,k},\Gamma^m(\mathfrak{gl}_n^{(1)}))$$
compatible with the cup products, and for $m=1$ the map $\phi_{\Gamma^m(gl^{(1)}),n}$ is an isomorphism.
Let $b[1]$ be the pre-image of the Witt vector class by $\phi_{\Gamma^1(gl^{(1)}),n}$. We define 
$c[m]:=(\phi_{\Gamma^m(gl^{(1)}),n}\circ \psi_m)(b[1]^{\otimes m})$.
Then $c[1]$ is the Witt vector class since $\psi_1$ is the identity, and by theorem \ref{thm-cl-univ}(3) the classes $c[i]$ 
satisfy condition 2.
\end{proof}

\begin{Corollary}
The cohomological finite generation conjecture (Theorem \ref{reductiveCFG}) 
holds.
\end{Corollary}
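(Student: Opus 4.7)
The plan is to run the spectral sequence argument laid out in the introduction, using the classes $c[m]$ of Corollary \ref{cor-univ-class} to supply the missing input that previously forced one to restrict to $\GL_2$. By Lemma \ref{mylemma} together with the fact that any geometrically reductive group scheme embeds into some $\GL_n$, it suffices to verify (CFG) for $G=\GL_n$ with $n$ large. I therefore fix such an $n$ and a $G$-algebra $A$.

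First I would apply Grosshans' filtration to pass to $\gr A$ together with the graded algebra $\A$ admitting maps $\A\to A$ and $\A\to \gr A$. The spectral sequence $E(A)$ of the filtered Hochschild complex has $E_1$-page $H^*(G,\gr A)$ abutting to $H^*(G,A)$, and is acted on by the trivial spectral sequence $E(\A)$ whose pages equal $H^*(G,\A)$. Hence the problem reduces to showing that $H^*(G,\gr A)$ is noetherian as a module over $H^*(G,\A)$: the Evens-style ring-of-operators argument of \cite{Evens}, \cite{Friedlander-Suslin} then forces $E(A)$ to stop and yields the finite generation of its abutment.

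To establish this noetherian structure, choose $r$ by Mathieu's theorem so that $x^{p^r}\in\gr A$ for every $x\in\hull\gr A$, and consider the Hochschild--Serre spectral sequence
$$E_2^{ij}=H^i(G,H^j(G_r,\gr A)^{(-r)})\Rightarrow H^{i+j}(G,\gr A).$$
By \cite{Friedlander-Suslin} the graded algebra $H^*(G_r,\gr A)^{(-r)}$ is noetherian over $R:=\bigotimes_{i=1}^r S^*((\gl_n)^\#(2p^{i-1}))\otimes\hull\gr A$, and $R$ has a good filtration by Akin-Buchsbaum-Weyman, so $H^*(G_r,\gr A)^{(-r)}$ has finite good filtration dimension. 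Consequently page $2$ is noetherian over its bottom row $E_2^{0*}=H^0(G,H^*(G_r,\gr A)^{(-r)})$, and it would be enough to realise $E_2^{0*}$ as a noetherian module over $H^*(G,\A)$ via the edge map.

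Here the classes $c[m]$ are designed to do the essential work: each tensor factor $S^{m_i}((\gl_n)^\#(2p^{i-1}))$ is dual to $\Gamma^{m_i}(\gl_n^{(i)})$, so repeated cup product with Frobenius twists of the $c[m]$ would turn $G$-invariants in $R^{(r)}\otimes\A$ into cohomology classes in $H^*(G,\A)$ of the appropriate total degree. Condition (2) of Corollary \ref{cor-univ-class} ensures that these cup products are compatible with the multiplication in $R$, so one obtains a ring map from an appropriate subring of $H^0(G,R^{(r)}\otimes\A)$ into $H^*(G,\A)$ whose image makes $E_2^{0*}$ noetherian over $H^*(G,\A)$. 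The finite-column constraint on $E(A)$ needed to run the Evens argument is then supplied by Theorem \ref{SvdKthm}. The hardest part will be the bookkeeping across the Frobenius layers and among the mixed summands in $R^{(r)}\otimes\A$, so that the lifted classes assemble into a genuine ring of operators; this is the $\GL_n$ analogue of the $\GL_2$ computation of \cite{cohGrosshans}, now made accessible for all $n$ by the existence of the $c[m]$.
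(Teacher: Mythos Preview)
Your proposal follows essentially the same route as the paper's proof: reduce to $\GL_n$, run the Hochschild--Serre spectral sequence for $G_r\subset G$ with coefficients in $\gr A$, use Friedlander--Suslin together with the good filtration on the auxiliary tensor algebra to see that page~2 is noetherian over its column $E_2^{0*}$, then invoke the $c[m]$ of Corollary~\ref{cor-univ-class} to factor the relevant map through $H^{\even}(G,\A)$ so that $E_2^{**}$ becomes noetherian over $H^*(G,\A)$, and finally feed this into the Grosshans spectral sequence $E(A)$. A couple of bookkeeping slips to clean up: the paper takes $C=\A^{G_r}$ (not $\hull\gr A$) as the trivial $G_r$-coefficient algebra in Friedlander--Suslin's Theorem~1.5, which is what allows the factoring through $H^{\even}(G,\A)$; and Theorem~\ref{SvdKthm} supplies the finite good filtration dimension that makes the \emph{Hochschild--Serre} spectral sequence stop (giving noetherianity of its abutment $H^*(G,\gr A)$), not a finite-column constraint on $E(A)$ itself.
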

\begin{proof}
Let $G$ be a reductive linear algebraic group acting on a finitely generated commutative
$k$-algebra $A$. We want to prove that $H^*(G,A)$ is finitely generated. 
To do this, it suffices to follow \cite{cohGrosshans} and this is exactly what we do
below. We keep the notations of the introduction. 

By lemma
\ref{mylemma}, the case $G=GL_{n,k}$ suffices. As recalled in the 
introduction, there exists a positive integer $r$ such that  
the Hochschild-Serre
spectral sequence
$$E_2^{ij}=H^i(G/G_r,H^j(G_r,\gr A))\Rightarrow H^{i+j}(G,\gr A)$$
stops for a finite good filtration dimension reason. Moreover it
is a sequence of finitely generated algebras, and its second page is
noetherian over its subalgebra $E_2^{0*}$ (all this was first proved in 
\cite[Prop 3.8]{cohGrosshans}, under some restrictions on the characteristic
which were removed in \cite{Srinivas vdK}).

The composite $\A^{G_r}\hookrightarrow \A \twoheadrightarrow \gr A$ makes
$\gr A$ into a noetherian module over $\A^{G_r}$. Hence, by \cite[Thm
1.5]{Friedlander-Suslin}
(with $\text{`$C$'}=\A^{G_r}$) and by invariant theory \cite[Thm 16.9]{Grosshans book},
$E_2^{0*}=H^0(G/G_r,H^*(G_r,\gr A))$ (hence $E_2^{**}$) is noetherian over 
$H^0(G/G_r,\bigotimes_{i=1}^r
S^*((\gl_n^{(r)})^\#(2p^{i-1}))\otimes\A^{G_r})$. 

Now we use the classes of corollary \ref{cor-univ-class} as in section 4.5 and in the
proof of corollary 4.8 of \cite{cohGrosshans}. In this way, we factor 
the morphism 
$H^0(G/G_r,\bigotimes_{i=1}^r
S^*((\gl_n^{(r)})^\#(2p^{i-1}))\otimes\A^{G_r})\to E_2^{0*}$ through
the map $H^\even(G,\A)\to H^0(G/G_r,H^\even(G_r,\A))= E_2^{0\,\even}$
(the latter map is induced by restricting the cohomology from $G$ to $G_r$).
So $E_2^{**}$ is noetherian over $H^\even(G,\A)$. By 
\cite[Lemma 1.6]{Friedlander-Suslin} (with $\text{`$A$'}=H^\even(G,\A)$ and 
$\text{`$B$'}=k$), we
conclude that the map $H^\even(G,\A)\to H^*(G,\gr A)$ (induced by $\A\to \gr
A$) makes $H^*(G,\gr A)$ into a noetherian module over $H^\even(G,\A)$.

The proof finishes as described in the introduction (or in section 4.11 of 
\cite{cohGrosshans}): the second spectral sequence
$$E(A):E_1^{ij}=H^{i+j}(G,\gr_{-i}A)\Rightarrow H^{i+j}(G, A)$$
is a sequence of finitely generated algebras. It is acted on by the trivial 
spectral sequence $E(\A)$ whose pages equal $H^*(G,\A)$. But we have proved
that $E_1^{**}$ is noetherian over $H^*(G,\A)$, so by the usual trick
(\cite{Evens}, \cite{Friedlander-Suslin} or \cite[Lemma 3.9]{reductive}) the
spectral sequence $E(A)$ stops, which proves that  $H^*(G,A)$ is finitely
generated.
\end{proof}

\section{Proof of theorem \ref{thm-cl-univ}}\label{four}

By \cite[Prop 3.21]{Touze}, the divided powers $\Gamma^\ell$ admit a twist
compatible coresolution $J_\ell$. So by \cite[Prop 3.18]{Touze}, we have a 
bicomplex $A(J_\ell)$ whose totalization yield an $H^*_\mathcal{P}$-acyclic
coresolution of $\Gamma^\ell(gl^{(1)})$. In particular the homology of 
the totalization of $H^0_\mathcal{P}(A(J_\ell))$ computes
$H^*_\mathcal{P}(\Gamma^\ell(gl^{(1)}))$. 

The plan of the proof of theorem \ref{thm-cl-univ} is the following. 
First, we build the maps $\psi_\ell$. To be more specific, we build maps
$\vartheta_\ell$ which send each element of degree $d$ of   
$\Gamma^\ell(H^*_\mathcal{P}(gl^{(1)}))$ to a homogeneous cocycle of bidegree
$(0,d)$ in the bicomplex $H^0_\mathcal{P}(A(J_\ell))$. Our maps $\psi_\ell$ will
then be induced by the $\vartheta_\ell$.

Second, we show the relations
between the classes on the cochain level. In this step, 
we encounter the following problem: the cup product of two
classes is represented by  a cocycle 
in the bicomplex $H^0_\mathcal{P}(A(J_\ell)\otimes A(J_m))$
while we want to have it represented by a cocycle in 
$H^0_\mathcal{P}(A(J_\ell\otimes J_m))$. So we have investigate further the
compatibility of the functor $A$ with cup products.

Finally, we prove theorem \ref{thm-cl-univ}(2) by reducing to one parameter
subgroups.

\begin{NotationConvention}\label{notasgn}
If $\mathfrak{A}$ is an additive category, we denote by $Ch^{\ge
0}(\mathfrak{A})$ (resp. $\text{$p$-}Ch^{\ge
0}(\mathfrak{A})$, resp. $\text{bi-}Ch^{\ge
0}(\mathfrak{A})$) the category of nonnegative cochain complexes (resp.
$p$-complexes, resp. bicomplexes) in $\mathfrak{A}$.

If $\mathfrak{A}$ is equipped with a tensor product, then $Ch^{\ge
0}(\mathfrak{A})$ inherits a tensor product. The differential of the tensor
product $C\otimes D$ involves a Koszul sign: the
restriction of $d_{C\otimes D}$ to $C^i\otimes D^j$ equals
$d_C\otimes\mathrm{Id}+(-1)^i \mathrm{Id}\otimes d_D$.
The category $\text{$p$-}Ch^{\ge
0}(\mathfrak{A})$ also inherits a tensor product, but the $p$-differential of
$C\otimes D$ does not involve any sign: $d_{C\otimes
D}=d_C\otimes\mathrm{Id}+\mathrm{Id}\otimes d_D$.

Now we turn to bicomplexes. First, we may view a 
complex $C^\bullet$ whose terms $C^j$ are
chain complexes as a bicomplex $C^{\bullet,\bullet }$ 
whose object $C^{i,j}$ is the 
$i$-th object of the complex $C^j$ ({\it i.e.} the complexes $C^j$ are the rows of 
$C^{\bullet,\bullet }$). Thus we obtain an identification:
$$Ch^{\ge 0}(Ch^{\ge 0}(\mathfrak{A}))=\text{bi-}Ch^{\ge
0}(\mathfrak{A})\;.$$  
Being a category of cochain complexes, the term on the left hand side has a 
tensor product. If $C$ is a
bicomplex, let us denote by $d_C^{i,j}:C^{i,j}\to C^{i+1,j}$ its first
differential, and by $\partial_C^{i,j}:C^{i,j}\to C^{i,j+1}$ its second one.
Then one checks that the tensor product on bicomplexes induced by the
identification is such that the restriction of
$d_{C\otimes D}$ (resp. $\partial_{C\otimes D}$) to 
$C^{i_1,j_1}\otimes D^{i_2,j_2}$ equals
$d_C\otimes\mathrm{Id}+(-1)^{i_1}\mathrm{Id}\otimes d_D$ (resp. 
$\partial_C\otimes\mathrm{Id}+(-1)^{j_1}\mathrm{Id}\otimes \partial_D$).

We define the totalization $\mathrm{Tot}(C)$ of a bicomplex $C$
with the Koszul sign convention: the restriction of $d_{\mathrm{Tot}(C)}$ to
$C^{i,j}$ equals $d_C+(-1)^{i}\partial_C$. If $C$, $D$ are two bicomplexes,
there is a canonical isomorphism of complexes: $\mathrm{Tot}(C)\otimes
\mathrm{Tot}(D)\simeq \mathrm{Tot}(C\otimes D)$ which sends an element
$x\otimes y\in C^{i_1,j_1}\otimes D^{i_2,j_2}$ to $(-1)^{j_1i_2}x\otimes y$.
\end{NotationConvention}

\subsection{Construction of the $\psi_\ell$, $\ell\ge 1$}

Let $\ell$ be a positive integer. By \cite[Prop 3.18, Prop 3.21]{Touze} we
have a bicomplex $H^0_\mathcal{P}(A(J_\ell))$ whose homology computes the
cohomology of the bifunctor $\Gamma^\ell(gl^{(1)})$. We now recall the 
description of the first two columns of this bicomplex. 
As in \cite[Section 4]{Touze}, we denote
by $A_1$ the $p$-coresolution of $gl^{(1)}$ 
obtained by precomposing the $p$-complex $T(S^1)$ by the bifunctor $gl$. The
symmetric group $\Si_\ell$ acts on the $p$-complex $A_1^{\otimes \ell}$ by permuting the
factors of the tensor product (unlike the case of ordinary complexes, the
action of $\Si_\ell$ does not involve a Koszul sign since the tensor product 
of $p$-complexes does not involve any sign).
Contracting the $p$-complex $A_1^{\otimes \ell}$ and applying
$H^0_\mathcal{P}$, we obtain an action of $\Si_\ell$ on the ordinary complex
$H^0_\mathcal{P}((A_1^{\otimes\ell})_{[1]})$.
By \cite[Lemma 4.2]{Touze}, the first two columns 
$H^*_\mathcal{P}(A(J_\ell)^{0,\bullet})\to 
H^*_\mathcal{P}(A(J_\ell)^{1,\bullet})$ of $H^0_\mathcal{P}(A(J_\ell))$
equal 
$$ \underbrace{H^0_\mathcal{P}\left((A_1^{\otimes \ell})_{[1]}\right)}_{
\text{column of index $0$}}\xrightarrow[]{\,\prod 
(1-\tau_i)\,}
\underbrace{\bigoplus_{i=0}^{\ell-2}H^0_\mathcal{P}\left((A_1^{\otimes \ell}
)_{[1]}\right)}_{\text{column of index $1$}}\;,$$
where $\tau_i\in\Si_\ell$ is the transposition which exchanges $i+1$ and $i+2$ (and with the convention that the second column is 
null if $\ell=1$). Thus we have:
\begin{Lemma}\label{lm-cocycles-col-1}
Let $\mathcal{Z}_\ell^\even$ be the set of homogeneous cocycles of bidegree $(0,d)$,
$d$ even, in the bicomplex $H^0_\mathcal{P}(A(J_\ell))$. Then
$\mathcal{Z}_\ell^\even$ identifies as the set of even degree cocycles of the
complex $H^0_\mathcal{P}((A_1^{\otimes\ell})_{[1]})$, which are invariant
under the action of $\Si_\ell$.
\end{Lemma}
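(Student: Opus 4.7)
The plan is to unpack the definition of a cocycle in the bicomplex $H^0_\mathcal{P}(A(J_\ell))$, using the explicit description of its first two columns just recalled from \cite[Lemma 4.2]{Touze}. With the Koszul convention of Notations~\ref{notasgn}, the horizontal and vertical differentials of a bicomplex commute (since $(d+(-1)^i\partial)^2=0$ forces $d\partial=\partial d$), so an element of bidegree $(0,d)$ is a bicomplex cocycle if and only if it is annihilated by both differentials; I would check the two conditions in turn.

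For the vertical condition: the column of index $0$ is, as an ordinary complex, $H^0_\mathcal{P}((A_1^{\otimes \ell})_{[1]})$, so $\partial x = 0$ at bidegree $(0,d)$ is simply the statement that $x$ is a degree-$d$ cocycle of this complex.

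For the horizontal condition: the horizontal differential at column $0$ is $\prod_{i=0}^{\ell-2}(1-\tau_i)$, landing in $\bigoplus_{i=0}^{\ell-2} H^0_\mathcal{P}((A_1^{\otimes\ell})_{[1]})$. Hence $dx=0$ if and only if $(1-\tau_i)x = 0$ for every $i = 0,\ldots,\ell-2$, i.e., $\tau_i x = x$ for every elementary transposition $\tau_i$. Since these elementary transpositions generate $\Si_\ell$, this is precisely the $\Si_\ell$-invariance of $x$.

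There is essentially no real obstacle here; the lemma is a direct translation of the two vanishing conditions, once one trusts the description of the first two columns from \cite[Lemma 4.2]{Touze}. The only point requiring a small check is that no extra Koszul sign appears in the horizontal differential at bidegree $(0,d)$; this follows from the sign-free action of $\Si_\ell$ on the $p$-complex $A_1^{\otimes\ell}$ stressed in the paragraph preceding the lemma. The restriction to even $d$ in the definition of $\mathcal{Z}_\ell^\even$ is not needed for the identification itself but matches the degrees of the cocycles that will arise in the forthcoming construction of $\vartheta_\ell$ from homogeneous classes of $\Gamma^\ell H^*_\mathcal{P}(gl^{(1)})$.
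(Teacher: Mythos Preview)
Your argument is correct and is exactly the unpacking the paper intends: the paper gives no separate proof but simply writes ``Thus we have'' after displaying the first two columns, and your two-step check (vertical differential $=$ cocycle condition in $H^0_\mathcal{P}((A_1^{\otimes\ell})_{[1]})$, horizontal differential $\prod(1-\tau_i)=0$ $\Leftrightarrow$ $\Si_\ell$-invariance via the generating transpositions) is precisely what that ``Thus'' abbreviates.
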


Now we turn to building a map $\vartheta_\ell:\Gamma^\ell(H^*_\mathcal{P}(gl^{(1)}))\to
\mathcal{Z}_\ell^\even$. In view of lemma \ref{lm-cocycles-col-1}, it suffices to
build a $\Si_\ell$-equivariant map 
$\vartheta_\ell:H^*_\mathcal{P}(gl^{(1)})^{\otimes\ell}
\to H^0_\mathcal{P}((A_1^{\otimes\ell})_{[1]})$. 

Let us first recall what we know about $H^*_{\mathcal{P}}(gl^{(1)})$.
By \cite[Thm 1.5]{Franjou-Friedlander} and \cite[Th. 4.5]{Friedlander-Suslin}, the graded vector space 
$H^*_{\mathcal{P}}(gl^{(1)})$ is concentrated in degrees $2i$, $0\le i<p$ and one dimensional in these degrees. 
Following \cite{SFB}, we denote by $e_1(i)$ a generator of degree $2i$ of this graded vector space. 
The homology of the complex $H^0_\mathcal{P}(A_{1[1]})$ computes the
cohomology of the bifunctor $gl^{(1)}$.
Thus we may choose for each integer $i$, $0\le i<p$,  a cycle $z_i$ 
representing the cohomology class $e_1(i)$ in this complex.
The cycles $z_i$ determine a graded map 
$H^*_{\mathcal{P}}(gl^{(1)})\to H^0_\mathcal{P}(A_{1[1]})$. 
By \cite[Prop 3.3]{Touze}, we may take cup products on the cochain level 
to obtain for each $\ell\ge 1$ a map 
$$H^*_{\mathcal{P}}(gl^{(1)})^{\otimes \ell}\to H^0_\mathcal{P}(A_{1[1]})^{\otimes \ell}\xrightarrow[]{\cup} 
H^0_\mathcal{P}((A_{1[1]})^{\otimes \ell})\;.$$
Moreover we define chain maps $h_\ell:(A_{1[1]})^{\otimes \ell}\to
(A_1^{\otimes\ell})_{[1]}$ by iterated use of
\cite[Prop 2.7]{Touze}. More
specifically, $h_1$ is the identity and
$h_\ell=h_{A_1^{\otimes\ell-1},A_1}\circ (h_{\ell-1}\otimes h_1)$. 

\begin{Lemma}\label{lm-vartheta} Let $\ell$ be a positive integer and let 
$\vartheta_\ell$ be the composite
$$\vartheta_\ell:= H^*_{\mathcal{P}}(gl^{(1)})^{\otimes \ell}\to
H^0_\mathcal{P}((A_{1[1]})^{\otimes
\ell})\xrightarrow[]{\,H^0_\mathcal{P}(h_\ell)\,}
H^0_\mathcal{P}((A_{1}^{\otimes \ell})_{[1]})\;. $$
Then $\vartheta_\ell$ satisfies the following two properties:
\begin{enumerate}
\item[(1)]
The image of $\vartheta_\ell$ is contained in the set of even degree
cocycles of 
$H^0_\mathcal{P}((A_{1}^{\otimes \ell})_{[1]})$. 
\item[(2)] $\vartheta_\ell$
is $\Si_\ell$-equivariant.
\end{enumerate} 
\end{Lemma}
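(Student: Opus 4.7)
My plan is to trace a pure tensor $z_{i_1}\otimes\cdots\otimes z_{i_\ell}$ of the chosen cocycles through the composite defining $\vartheta_\ell$ and check each property in turn. For (1), each $z_i$ is a cocycle of even degree $2i$ in $H^0_\mathcal{P}(A_{1[1]})$, since $H^*_\mathcal{P}(gl^{(1)})$ is concentrated in even degrees $2i$ with $0\le i<p$. The external cup product is a chain map, so it sends the pure tensor to a cocycle of even total degree in $H^0_\mathcal{P}((A_{1[1]})^{\otimes\ell})$, and the chain map $H^0_\mathcal{P}(h_\ell)$ then yields a cocycle of the same even degree in the target, which settles (1).

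For (2), the main subtlety is that the two $\Si_\ell$-actions in play are of different flavours: following \ref{notasgn}, the action on the ordinary complex $H^0_\mathcal{P}((A_{1[1]})^{\otimes\ell})$ is the signed (Koszul) permutation of factors, whereas the action on $H^0_\mathcal{P}((A_1^{\otimes\ell})_{[1]})$, inherited from the $p$-complex $A_1^{\otimes\ell}$, carries no sign at all. My observation is that on the subspace generated by cup products of the even-degree cocycles $z_i$ every Koszul sign is $+1$, so both actions restrict there to plain unsigned permutation of tensor factors. The cup product tautologically intertwines the unsigned permutation on source and target, so the equivariance of $\vartheta_\ell$ reduces to showing that $H^0_\mathcal{P}(h_\ell)$ commutes with the unsigned permutation action on this subspace.

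The main obstacle will be that $h_\ell$ is defined by the asymmetric recursion $h_\ell=h_{A_1^{\otimes\ell-1},A_1}\circ(h_{\ell-1}\otimes h_1)$ and is not manifestly symmetric. My plan is to invoke the naturality and associativity of the shuffle-type map $h_{C,D}$ supplied by \cite[Prop 2.7]{Touze}, together with its compatibility with the flip $C\otimes D\to D\otimes C$. Reducing to adjacent transpositions $\tau_i$, one rebrackets the iterated $h$ using associativity so that the only non-trivial swap concerns two adjacent $A_1$-factors; the two-variable case of \cite[Prop 2.7]{Touze} then shows that the two resulting compositions differ by exactly the Koszul sign coming from the flip. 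Because our cycles are all of even degree that sign is trivial, and equivariance follows. I expect the argument to be routine once the sign conventions of \ref{notasgn} around the contraction functor $[1]$ and the maps $h_{C,D}$ are unwound with care.
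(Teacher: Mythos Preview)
Your treatment of (1) is fine and matches the paper.

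For (2), your plan relies on two structural properties of the maps $h_{C,D}$ from \cite[Prop.~2.7]{Touze} that you have not established: an associativity statement allowing you to rebracket the iterated $h$, and a flip-compatibility statement of the form $h_{D,C}\circ\mathrm{twist}=\mathrm{twist}\circ h_{C,D}$ (up to Koszul sign). Neither is asserted in the cited proposition, and the paper in fact warns explicitly that $h_\ell$ is \emph{not} $\Si_\ell$-equivariant in general. So your ``routine'' unwinding would in effect amount to proving a new lemma about $h_{C,D}$; as written, the argument has a gap at exactly the point you flagged as the main obstacle.

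The paper bypasses this entirely. Rather than analysing how $h_\ell$ interacts with transpositions, it observes that the first map $H^*_{\mathcal P}(gl^{(1)})^{\otimes\ell}\to H^0_{\mathcal P}((A_{1[1]})^{\otimes\ell})$ factors through a small graded subobject
\[
p(A_1,\dots,A_1)=\bigoplus_{i_1,\dots,i_\ell}\;\bigotimes_{s=1}^\ell A_1^{i_s p},
\]
placed in degree $2\sum i_s$, because each chosen cycle $z_i$ lives in $p$-degree $ip$. The key point drawn from \cite[Prop.~2.7]{Touze} is that $h_\ell$ restricted to $p(A_1,\dots,A_1)$ is the \emph{identity}. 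Both inclusions of $p(A_1,\dots,A_1)$ into $(A_{1[1]})^{\otimes\ell}$ and $(A_1^{\otimes\ell})_{[1]}$ are $\Si_\ell$-equivariant (the first because all degrees are even, the second because the $p$-complex tensor product carries no sign), so equivariance of $\vartheta_\ell$ is immediate. This is both shorter and avoids any unproved coherence properties of $h_{C,D}$; I recommend you adopt this factorisation argument.
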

\begin{proof}
The first property is straightforward from the definition of
$\vartheta_\ell$. We prove the second one.
The map $H^*_{\mathcal{P}}(gl^{(1)})^{\otimes \ell}\to 
H^0_\mathcal{P}((A_{1[1]})^{\otimes \ell})$ is defined using cup 
products, hence it is $\Si_\ell$-equivariant. Thus, to prove the lemma, we have to study the map 
$h_\ell:(A_{1[1]})^{\otimes \ell}\to (A_1^{\otimes\ell})_{[1]}$. 

Recall that $h_\ell$ is built by iterated uses of \cite[Prop 2.7]{Touze}.  Thus, if we define the graded object 
$p(A_1,\dots,A_1)=\bigoplus_{i_1,\dots,i_\ell}\bigotimes_{s=1}^\ell A_1^{i_s p}$ with the component 
$\bigotimes_{s=1}^\ell A_1^{i_s p}$ in degree $2(\sum i_s)$, we have well defined inclusions of 
$p(A_1,\dots,A_1)$ into the complexes $(A_{1[1]})^{\otimes \ell}$ and $(A_1^{\otimes\ell})_{[1]}$. 
Moreover $h_\ell$ fits into a commutative diagram:
$$\xymatrix{
(A_{1[1]})^{\otimes \ell}\ar@{->}[rr]^-{h_\ell}&&(A_1^{\otimes\ell})_{[1]}\\
p(A_1,\dots,A_1)\ar@{^{(}->}[u]^{(a)}\ar@{=}[rr]&&p(A_1,\dots,A_1)\ar@{^{(}->}[u]^{(b)}\,.
}$$
Let $\Si_\ell$ act on $p(A_1,\dots,A_1)$ by permuting the factors of the 
tensor product, on $(A_{1[1]})^{\otimes \ell}$ 
by permuting the factors of the tensor product with a Koszul sign, 
and on $(A_1^{\otimes\ell})_{[1]}$ by 
permuting the factors of the tensor product $A_1^{\otimes\ell}$ (without
sign).  Then the map $(b)$ is equivariant, and the map 
$(a)$ is also equivariant since $p(A_1,\dots,A_1)$ is concentrated in even 
degrees. The map $h_\ell$ is \emph{not} equivariant. 
However, by definition the equivariant map $H^*_{\mathcal{P}}(gl^{(1)})^{\otimes \ell}\to 
H^0_\mathcal{P}((A_{1[1]})^{\otimes \ell})$ factors through
$H^0_\mathcal{P}(p(A_1,\dots,A_1))$  so that 
postcomposition of this map by $H^0_\mathcal{P}(h_\ell)$ 
(ie: the map $\vartheta_\ell$) is in fact equivariant.
\end{proof}

\begin{Notation}
By lemmas \ref{lm-cocycles-col-1} and \ref{lm-vartheta}, for all $\ell\ge 1$,
the map
$\vartheta_\ell$ induces a map $\Gamma^\ell(H^*_\mathcal{P}(gl^{(1)}))\to 
\mathcal{Z}^\even_\ell$. We denote by $\psi_\ell$ the composite
$$\psi_\ell:= \Gamma^\ell( H^*_{\mathcal{P}}(gl^{(1)})) \to
\mathcal{Z}_\ell^\even
\to H^*_\mathcal{P}(\Gamma^\ell(gl^{(1)})) \;. $$
\end{Notation}

\begin{Lemma}
The map $\psi_1$ equals the identity map. 
\end{Lemma}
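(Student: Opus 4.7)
The plan is essentially to unwind the definitions for $\ell=1$ and observe that each step in the construction of $\psi_1$ reduces to the identity, once we make the identifications explicit.

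First I would note the identifications on both source and target: since $\Gamma^1$ is the identity functor, we have canonical identifications $\Gamma^1(H^*_\mathcal{P}(gl^{(1)}))=H^*_\mathcal{P}(gl^{(1)})$ and $\Gamma^1(gl^{(1)})=gl^{(1)}$; in particular the target of $\psi_1$ is just $H^*_\mathcal{P}(gl^{(1)})$. Also the set $\mathcal{Z}_1^{\even}$ of $(0,d)$-cocycles (with $d$ even) in $H^0_\mathcal{P}(A(J_1))$ coincides with the even-degree cocycles of the complex $H^0_\mathcal{P}(A_{1[1]})$, since for $\ell=1$ the column of index $1$ in the bicomplex is zero (by Lemma~\ref{lm-cocycles-col-1} and the convention there). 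Under these identifications, the second map in the definition of $\psi_1$, namely $\mathcal{Z}_1^{\even}\to H^*_\mathcal{P}(\Gamma^1(gl^{(1)}))$, is exactly the map that sends a cocycle to its cohomology class in the homology of $H^0_\mathcal{P}(A_{1[1]})$, which computes $H^*_\mathcal{P}(gl^{(1)})$.

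Next I would examine $\vartheta_1$. By construction it is the composite
$$\vartheta_1 : H^*_\mathcal{P}(gl^{(1)})^{\otimes 1}\to H^0_\mathcal{P}((A_{1[1]})^{\otimes 1})\xrightarrow{H^0_\mathcal{P}(h_1)} H^0_\mathcal{P}((A_1^{\otimes 1})_{[1]}).$$
The single tensor factor makes the cup-product step trivial: the first map is just the graded map $H^*_\mathcal{P}(gl^{(1)})\to H^0_\mathcal{P}(A_{1[1]})$ sending each generator $e_1(i)$ to the chosen cocycle representative $z_i$. By definition $h_1=\mathrm{id}$, so $H^0_\mathcal{P}(h_1)$ is also the identity. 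Hence $\vartheta_1$ sends $e_1(i)$ to $z_i$.

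Finally, I would assemble these observations: $\psi_1$ sends $e_1(i)\in\Gamma^1(H^*_\mathcal{P}(gl^{(1)}))$ to the cohomology class $[z_i]$ in the homology of $H^0_\mathcal{P}(A_{1[1]})$, and by the very choice of $z_i$ this class is $e_1(i)$. Since the $e_1(i)$ span $H^*_\mathcal{P}(gl^{(1)})$, the map $\psi_1$ is the identity.

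There is no real obstacle here; the only care required is to keep track of the two places where identifications are made (the identification $\mathcal{Z}_1^{\even}\simeq$ even cocycles in $H^0_\mathcal{P}(A_{1[1]})$ and the agreement between the homology of $H^0_\mathcal{P}(A(J_1))$ and that of $H^0_\mathcal{P}(A_{1[1]})$), both of which are essentially built into the setup of Section~\ref{four} and Lemma~\ref{lm-cocycles-col-1}.
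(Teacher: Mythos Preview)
Your proof is correct and follows essentially the same route as the paper: you unwind the definition of $\vartheta_1$ (cup product trivial for a single factor, $h_1=\id$) to see that $e_1(i)\mapsto z_i$, and then observe that passing to cohomology recovers $e_1(i)$. The paper's proof is identical in substance, only more tersely stated.
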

\begin{proof}
For $\ell=1$, $\vartheta_1$ is just the map $H^*_{\mathcal{P}}(gl^{(1)})\to \Hom(\Gamma^{p}(gl),A_{1[1]})$ which sends the 
generator $e_1(i)$ of $H^{2i}_{\mathcal{P}}(gl^{(1)})$ to the cycle $z_i$ representing this generator. Moreover, by definition 
of $z_i$, the map $\mathcal{Z}_1^\even\twoheadrightarrow H^*_{\mathcal{P}}(gl^{(1)})$ sends $z_i$ to $e_1(i)$.  Thus, for all $i$, $\psi_1$ 
sends $e_1(i)$ to itself.
\end{proof}

\subsection{Proof of theorem \ref{thm-cl-univ}(3)}
%If $\mathfrak{A}$ is an additive category, we denote by $Ch^{\ge
%0}(\mathfrak{A})$ (resp. $\text{$p$-}Ch^{\ge
%0}(\mathfrak{A})$, resp. $\text{bi-}Ch^{\ge
%0}(\mathfrak{A})$) the category of nonnegative cochain complexes (resp.
%$p$-complexes, resp. bicomplexes) in $\mathfrak{A}$. 
Let $\mathcal{P}_k$ be the strict
polynomial functor category and let $\mathcal{TP}_k$ be the twist compatible
subcategory \cite[Def 3.9]{Touze}. Before proving theorem
\ref{thm-cl-univ}(3), we need to study further properties of the functor $A:Ch^{\ge
0}(\mathcal{TP}_k)\to \text{bi-}Ch^{\ge 0}(\mathcal{P}_k(1,1))$ \cite[Def.
3.17]{Touze}.
Recall that $A$ is defined as the composite of
the following three functors:
\begin{enumerate}
\item The `Troesch coresolution functor' \cite[Prop 3.13]{Touze}
$$ T: Ch^{\ge 0}(\mathcal{TP}_k)\to \text{$p$-}Ch^{\ge
0}(Ch^{\ge 0}(\mathcal{P}_k))\;,$$
\item The contraction functor 
$$ -_{[1]}: \text{$p$-}Ch^{\ge 0}(Ch^{\ge 0}(\mathcal{P}_k))\to 
Ch^{\ge 0}(Ch^{\ge 0}(\mathcal{P}_k))\;,$$
\item Precomposition by the bifunctor $gl$
$$ -\circ gl: Ch^{\ge 0}(Ch^{\ge 0}(\mathcal{P}_k))\to Ch^{\ge 0}(Ch^{\ge
0}(\mathcal{P}_k(1,1)))= \text{bi-}Ch^{\ge 0}(\mathcal{P}_k(1,1))\;.$$
\end{enumerate}
%The last equality is obtained by identifying a
%complex $C^\bullet$ whose terms $C^j$ are
%chain complexes to the bicomplex $C^{\bullet,\bullet }$ 
%whose object $C^{i,j}$ is the 
%$i$-th object of the complex $C^j$ (ie: the complexes $C^j$ are the rows of 
%$C^{\bullet,\bullet }$).
%If $\mathfrak{A}$ is an additive category equipped with a tensor product, then
%the categories of complexes (resp. $p$-complexes) in $\mathfrak{A}$ inherit a
%tensor product. 
All the categories coming into play in the definition of $A$
are equipped with tensor products (cf. notations and sign conventions
\ref{notasgn}). The functors $T$ and $-\circ gl$ commute
with tensor products, but $-_{[1]}$ does not. As a result, if $F,G$ are
homogeneous strict
polynomial functors of respective degree $f,g$ and 
with respective twist compatible coresolutions $J_F,J_G$,
we have two (in general non isomorphic) $H^*_\mathcal{P}$-acyclic 
coresolutions of the tensor product $F\otimes G$ at our disposal:
$$\mathrm{Tot}(A(J_F))\otimes \mathrm{Tot}(A(J_G))\quad\text{and}
\quad\mathrm{Tot}(A(J_F\otimes J_G))\;.$$

Now the problem is the following. On the one hand,
cycles representing cup products of classes
in the cohomology of $F$ and $G$ are easily identified using 
the first complex.
%$H^0_\mathcal{P}(\mathrm{Tot}(A(J_F))\otimes \mathrm{Tot}(A(J_G)))$. 
Indeed, by \cite[Prop 3.3]{Touze}, the cup product
$$H^*_\mathcal{P}(F(gl^{(1)}))\otimes H^*_\mathcal{P}(G(gl^{(1)}))\to
H^*_\mathcal{P}(F(gl^{(1)})\otimes G(gl^{(1)})) $$ 
is defined at the cochain level by sending cocycles 
$x$ and $y$ respectively in 
$\Hom(\Gamma^{pf}(gl),\mathrm{Tot} (A(J_F)))$ and  
$\Hom(\Gamma^{pg}(gl),\mathrm{Tot} (A(J_G)))$ to the cocycle
$$x\cup y:= (x\otimes y)\circ \Delta_{pf,pg}\in
\Hom(\Gamma^{p(f+g)}(gl),\mathrm{Tot} (A(J_F))\otimes \mathrm{Tot} (A(J_G)))\,,$$
where $\Delta_{pf,pg}$ is the diagonal map $\Gamma^{p(f+g)}(gl)\to
\Gamma^{pf}(gl)\otimes \Gamma^{pg}(gl)$.
But on the other hand, by functoriality of $A$, if
$E\in\mathcal{P}_k$ then the effect of a
morphism $E\to F\otimes G$ is easily computed in
$H^0_\mathcal{P}(\mathrm{Tot}(A(J_F\otimes J_G)))$. So we want to be able to
identify cup products in $H^0_\mathcal{P}(\mathrm{Tot}(A(J_F\otimes J_G)))$
rather than in $H^0_\mathcal{P}(\mathrm{Tot}(A(J_F))\otimes
\mathrm{Tot}(A(J_G)))$. This is the purpose of next lemma.

\begin{Lemma}\label{lm-cup-prod-bicompl}
Let $F,G$ be homogeneous strict polynomial functors of degree $f$, $g$ which admit twist compatible coresolutions $J_F$ and $J_G$. 
Let 
$i,j,\ell,m$ be nonnegative integers, and let 
$$x_{i,2j}\in \Hom_{\mathcal{P}_{pf}^{pf}}(\Gamma^{pf} (gl),A(J_F))\text{ , }y_{\ell,2m}\in 
\Hom_{\mathcal{P}_{pg}^{pg}}(\Gamma^{pg} (gl), A(J_G))$$ 
be homogeneous cocycles of respective bidegrees $(i,2j)$ and $(\ell, 2m)$. 
\begin{enumerate}
\item The object $A(J_F)^{i,2j}\otimes  A(J_G)^{\ell,2m}$ appears once 
and only once in the bicomplex
$A(J_F\otimes J_G)$. It appears in bidegree
$(i+\ell, 2j+2m)$. In particular, the formula $$(x_{i,2j}\otimes y_{\ell,2m})\circ
\Delta_{pf,pg}\in \Hom(\Gamma^{p(f+g)}(gl),A(J_F)^{i,2j}\otimes 
A(J_G)^{\ell,2m} )$$
defines a homogeneous element of bidegree $(i+\ell, 2j+2m)$ in the bicomplex
$H^0_\mathcal{P}(A(J_F\otimes J_G))$.
\item The element $(x_{i,2j}\otimes y_{\ell,2m})\circ
\Delta_{pf,pg}$ is actually a cocycle, and represents the cup product
$[x_{i,2j}]\cup[y_{\ell,2m}]$ in $H^0_\mathcal{P}(\mathrm{Tot}(A(J_F\otimes
J_G)))$. 
\end{enumerate}
\end{Lemma}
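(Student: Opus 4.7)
The overall strategy is to exploit the factorization $A=(-\circ gl)\circ (-)_{[1]}\circ T$ and to track bidegrees carefully through the one non-multiplicative ingredient, namely the contraction $-_{[1]}$. Since both $T$ and $-\circ gl$ commute with tensor products, one has a canonical identification
$$A(J_F\otimes J_G)\;=\;\bigl((T(J_F)\otimes T(J_G))_{[1]}\bigr)\circ gl\;.$$
For part (1), the plan is to chase the explicit description of the contraction on a tensor product of $p$-complexes (of complexes), keeping in mind that the tensor product on $p$-complexes is \emph{sign-free} according to Convention~\ref{notasgn}. The bidegree $(i+\ell,2j+2m)$ piece of $A(J_F\otimes J_G)$ splits as a direct sum indexed by the compatible pairs of bidegrees contributing to it, and among these summands the block $A(J_F)^{i,2j}\otimes A(J_G)^{\ell,2m}$ occurs exactly once, the other summands corresponding to strictly different bidegree splittings. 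This is essentially a bookkeeping exercise once the explicit combinatorics of the contraction functor are spelled out.

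Granted this summand identification, the formula $(x_{i,2j}\otimes y_{\ell,2m})\circ \Delta_{pf,pg}$ is immediately well defined as a homogeneous element of bidegree $(i+\ell,2j+2m)$ in $H^0_{\mathcal{P}}(A(J_F\otimes J_G))$, by composing the external tensor product with the diagonal of $\Gamma^{p(f+g)}(gl)$ and using the inclusion of the summand. This gives the last assertion of (1). For part (2), the cocycle property is a direct Leibniz-rule verification: applying the two differentials of $A(J_F\otimes J_G)$ to $(x_{i,2j}\otimes y_{\ell,2m})\circ \Delta_{pf,pg}$ and using the sign rule of Convention~\ref{notasgn}, the result vanishes because $x_{i,2j}$ and $y_{\ell,2m}$ are already cocycles of bidegree $(i,2j)$ and $(\ell,2m)$ respectively and the second degrees $2j,2m$ are even (killing all Koszul signs).

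To see that the cocycle represents the cup product $[x_{i,2j}]\cup[y_{\ell,2m}]$, the plan is to compare with the cup product formula of \cite[Prop 3.3]{Touze}, which is written on the model $\mathrm{Tot}(A(J_F))\otimes\mathrm{Tot}(A(J_G))$. The commutation of $T$ and $-\circ gl$ with tensor products produces a canonical chain map
$$\mathrm{Tot}(A(J_F))\otimes\mathrm{Tot}(A(J_G))\;\longrightarrow\;\mathrm{Tot}(A(J_F\otimes J_G))\;,$$
which is a quasi-isomorphism since both sides are $H^*_{\mathcal{P}}$-acyclic coresolutions of $F(gl^{(1)})\otimes G(gl^{(1)})$ (by \cite[Prop 3.18]{Touze} and the fact that $J_F\otimes J_G$ is a twist compatible coresolution of $F\otimes G$). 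Under this quasi-isomorphism the external cocycle $(x_{i,2j}\otimes y_{\ell,2m})\circ\Delta_{pf,pg}$ is transported to the corresponding cocycle in $\mathrm{Tot}(A(J_F\otimes J_G))$, and since both sides compute the same cohomology class we conclude that the latter represents the cup product.

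\textbf{Main obstacle.} The delicate point is the explicit combinatorial identification in Step~1: showing that the contraction $-_{[1]}$ applied to a tensor product of $p$-complexes decomposes in such a way that each block $A(J_F)^{i,2j}\otimes A(J_G)^{\ell,2m}$ appears exactly once as a direct summand, in the single predicted bidegree $(i+\ell,2j+2m)$. This is precisely the place where the failure of $-_{[1]}$ to commute with tensor products must be quantified; once this summand decomposition is made explicit, the cocycle calculation and the cup product identification are formal consequences of the Leibniz rule, naturality of $\Delta_{pf,pg}$, and the fact that the relevant vertical degrees are even.
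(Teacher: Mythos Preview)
Your approach is essentially the paper's: identify $A(J_F\otimes J_G)$ with $(T(J_F)\otimes T(J_G))_{[1]}\circ gl$, locate the summand $A(J_F)^{i,2j}\otimes A(J_G)^{\ell,2m}$ by the combinatorics of contraction on a tensor of $p$-complexes (this is \cite[Lemma 2.2]{Touze} in the paper), and then compare with the cup product model $\mathrm{Tot}(A(J_F))\otimes\mathrm{Tot}(A(J_G))$ via a chain map of acyclic coresolutions over the identity of $F(gl^{(1)})\otimes G(gl^{(1)})$.

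There is, however, one point you gloss over that is the crux of part~(2). You write that ``the commutation of $T$ and $-\circ gl$ with tensor products produces a canonical chain map'' and that ``the external cocycle is transported to the corresponding cocycle''. But the commutation of $T$ and $-\circ gl$ alone does \emph{not} produce the chain map: what is needed is a comparison map $T(J_F)_{[1]}\otimes T(J_G)_{[1]}\to (T(J_F)\otimes T(J_G))_{[1]}$ for the contraction, and this is a genuine chain map (not an isomorphism) supplied by \cite[Prop.~2.4]{Touze}. The essential content of that proposition is exactly that this comparison map, on summands whose second degrees are even, is the \emph{identity}. Combined with the sign $(-1)^{2j\ell}=1$ from the $\mathrm{Tot}$-tensor interchange of Convention~\ref{notasgn}, this is what guarantees that the cocycle $(x_{i,2j}\otimes y_{\ell,2m})\circ\Delta_{pf,pg}$ in the tensor model is carried to the \emph{same formula} in $\mathrm{Tot}(A(J_F\otimes J_G))$, rather than merely to some cohomologous cocycle. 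Without this identity-on-summands statement, your quasi-isomorphism argument only tells you that the cup product class lives in $H^0_\mathcal{P}(\mathrm{Tot}(A(J_F\otimes J_G)))$, which is tautological; it does not single out the specific cocycle of part~(1) as a representative. So the ``main obstacle'' is slightly mis-located: the summand decomposition for part~(1) is indeed bookkeeping, but for part~(2) you must additionally check that the comparison chain map for contractions is the identity on even-even summands. Once you invoke \cite[Prop.~2.4]{Touze} for this, your argument and the paper's coincide.
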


\begin{proof}
Since $T$ commutes with tensor products \cite[Prop 3.13]{Touze}, the bicomplex
$A(J_F\otimes J_G)$ is naturally isomorphic to the precomposition by $gl$ of
the bicomplex $(T(J_F)\otimes T(J_G))_{[1]}$, while $A(J_F)\otimes A(J_G)$
equals the precomposition by $gl$ of the bicomplex 
$T(J_F)_{[1]}\otimes T(J_G)_{[1]}$. Recall that in the identification of
$Ch^{\ge 0}(Ch^{\ge 0}(\mathcal{P}_k(1,1)))$ and 
$\text{bi-}Ch^{\ge 0}(\mathcal{P}_k(1,1))$, the $j$-th object of 
a complex of complexes 
$C^\bullet$ corresponds to the $j$-th row of the bicomplex 
$C^{\bullet,\bullet}$ (that is the elements of bidegree $(*,j)$).
So the first statement simply follows from
\cite[Lemma 2.2]{Touze}.
Furthermore, by \cite[Prop 2.4]{Touze} there is a map of bicomplexes
$$A(J_F)\otimes A(J_G)\to A(J_F\otimes J_G)$$ which is the identity on  
$A(J_F)^{i,2j}\otimes A(J_G)^{\ell,2m}$. 
Applying the functor $\mathrm{Tot}$ we obtain a map of $H^*_{\mathcal{P}}$-acyclic coresolutions
$$\theta\,:\,\mathrm{Tot}(A(J_F))\otimes \mathrm{Tot}(A(J_G))\simeq
 \mathrm{Tot}(A(J_F)\otimes A(J_G))\to 
\mathrm{Tot}(A(J_F\otimes J_G))$$ 
over the identity map of $F(gl^{(1)})\otimes G(gl^{(1)})$, and whose restriction to 
$A(J_F)^{i,2j}\otimes A(J_G)^{\ell,2m}$ equals the identity.
%Now in the identification 
%$$Ch^{\ge 0}(Ch^{\ge 0}(\mathcal{P}_k(1,1)))=
%\text{bi-}Ch^{\ge 0}(\mathcal{P}_k(1,1))\;,$$
%the category on the left has a tensor product (because chain complexes over a
%category with a tensor product inherit a tensor product). If $C$ is a
%bicomplex, let us denote by $d_C^{i,j}:C^{i,j}\to C^{i+1,j}$ the first
%differential, and by $\partial_C^{i,j}:C^{i,j}\to C^{i,j+1}$ the second one.
%Then one checks that tensor product on bicomplexes induced by the
%identification send $C,D$ to $C\otimes D$ such that the restriction of
%$d_{C\otimes D}$ (resp. $\partial_{C\otimes D}$) to 
%$C^{i_1,j_1}\otimes D^{i_2,j_2}$ equals
%$d_C\otimes\mathrm{Id}+(-1)^{i_1}\mathrm{Id}\otimes d_D$ (resp. 
%$\partial_C\otimes\mathrm{Id}+(-1)^{i_1}\mathrm{Id}\otimes \partial_D$).
(more specifically, this equality holds up to a 
$(-1)^{2j\ell}=1$ sign coming from the sign in the formula 
$\mathrm{Tot}(C\otimes D)\simeq \mathrm{Tot}(C)\otimes\mathrm{Tot}(D)$).

By definition of the cup product, 
$(x_{i,2j}\otimes y_{\ell,2m})\circ\Delta_{pf,pg}$ is a cocycle representing 
$[x_{i,2j}]\cup[y_{\ell,2m}]$ in 
$H^0_\mathcal{P}(\mathrm{Tot}(A(J_F))\otimes \mathrm{Tot}(A(J_G)))$.  
Applying $\theta$ we obtain that 
$(x_{i,2j}\otimes y_{\ell,2m})\circ\Delta_{pf,pg}$ is a cocycle in 
$H^0_\mathcal{P}(\mathrm{Tot}(A(J_F\otimes J_G)))$, 
representing the same cup product.
\end{proof}

We now turn to the specific situation of theorem \ref{thm-cl-univ}(3), that is
$F=\Gamma^\ell$ and $G=\Gamma^m$. We first determine explicit maps between
the bicomplexes $A(J_\ell \otimes J_m)$ and $A(J_{\ell+m})$, which
lift the multiplication $\Gamma^\ell(gl^{(1)})\otimes\Gamma^m(gl^{(1)})
\to \Gamma^{\ell+m}(gl^{(1)})$ and
the diagonal $\Gamma^{\ell+m}(gl^{(1)}) \to \Gamma^\ell(gl^{(1)})
\otimes\Gamma^m(gl^{(1)})$. To do this,
we first need new information about the twist compatible coresolutions 
$J_\ell$ from \cite[Prop 3.21]{Touze}. 
\begin{Lemma}\label{lm-info-compl-res}
Let $\ell,m$ be positive integers. 
\begin{enumerate}
\item The multiplication $\Gamma^\ell\otimes \Gamma^m\to \Gamma^{\ell+m}$ lifts to a twist compatible chain map 
$J_\ell\otimes J_m\to J_{\ell+m}$. This chain map is given in degree $0$ by the shuffle product 
$(\otimes^\ell)\otimes(\otimes^m)= J^0_\ell\otimes J^0_m\to J^0_{\ell+m}=\otimes^{\ell+m}$ which sends a tensor 
$\otimes_{i=1}^{m+\ell} x_i$ to the sum $\sum_{\sigma\in Sh(\ell,m)}\otimes_{i=1}^{m+\ell} x_{\sigma^{-1}(i)}$.
\item The diagonal $\Gamma^{\ell+m}\to \Gamma^\ell\otimes \Gamma^m $ lifts to a twist compatible chain map 
$J_{\ell+m}\to J_\ell\otimes J_m$. This chain map equals the identity map in degree $0$.
\end{enumerate}
\end{Lemma}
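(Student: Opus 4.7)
\emph{Plan.} The idea is a standard comparison-of-coresolutions argument, carried out carefully inside the twist compatible subcategory $\mathcal{TP}_k$. I would begin by checking that $\mathcal{TP}_k$ is closed under tensor products (this is built into \cite[Def 3.9]{Touze}), so that $J_\ell \otimes J_m$ is a complex in $\mathcal{TP}_k$. A K\"unneth argument then shows $J_\ell \otimes J_m$ is a coresolution of $\Gamma^\ell \otimes \Gamma^m$. Both statements in the lemma then reduce to lifting a map prescribed in degree zero to a chain map of twist compatible coresolutions.

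For part (2), I use the identifications $J_{\ell+m}^0 = \otimes^{\ell+m} = \otimes^\ell \otimes \otimes^m = J_\ell^0 \otimes J_m^0$. Under the canonical embeddings $\Gamma^{\ell+m} \hookrightarrow \otimes^{\ell+m}$ and $\Gamma^\ell \otimes \Gamma^m \hookrightarrow \otimes^\ell \otimes \otimes^m$, the diagonal $\Gamma^{\ell+m} \to \Gamma^\ell \otimes \Gamma^m$ is compatible with the identity map of $\otimes^{\ell+m}$. So I would take the identity as the degree-zero component and extend it term by term through higher degrees by the usual inductive procedure.

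For part (1), the key input is the classical description of divided-power multiplication: under the inclusions of divided powers into tensor powers, the multiplication $\Gamma^\ell \otimes \Gamma^m \to \Gamma^{\ell+m}$ is given by the shuffle product $\sum_{\sigma \in Sh(\ell,m)}$. This verifies that the degree-zero component claimed in the lemma really is a lift of multiplication; one then extends inductively to a full chain map exactly as in part (2).

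The main obstacle will be ensuring that each inductive lift can be chosen inside $\mathcal{TP}_k$, i.e.\ that the standard proof of the comparison theorem goes through using only morphisms of the twist compatible category. Equivalently, one needs the relevant extension obstructions to vanish in $\mathcal{TP}_k$. I expect this to be a direct consequence of the acyclicity properties of the $J_\bullet$ built into \cite[Prop 3.21]{Touze}, but it is the one point where a routine comparison-of-resolutions argument must be carefully audited against the enlarged twist compatible structure; away from this check, both parts (1) and (2) follow from essentially standard homological algebra.
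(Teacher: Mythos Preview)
Your approach has a genuine gap precisely at the point you flag as ``the main obstacle.'' The standard comparison theorem needs a lifting property: to extend a map inductively along a coresolution you need the target objects to be injective (in the relevant category) or the source objects to be projective. But $\mathcal{TP}_k$ is only an additive subcategory of $\mathcal{P}_k$, not abelian with enough injectives, and nothing in \cite[Prop~3.21]{Touze} says that the $J_\ell^i$ are injective in $\mathcal{TP}_k$. Acyclicity of $J_\ell$ in $\mathcal{P}_k$ is not enough: a $\mathcal{P}_k$-nullhomotopy or a $\mathcal{P}_k$-lift need not be a morphism of $\mathcal{TP}_k$. So the inductive step ``choose the lift inside $\mathcal{TP}_k$'' is exactly where the argument breaks, and you have not given a reason it can be repaired.

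The paper avoids this difficulty entirely by a different, non-inductive construction. It recalls that $J_\ell$ is the degree-$\ell$ component of the double bar construction $\overline{B}(\overline{B}(S^*(-)))$, and that the bar construction of a commutative DGA is naturally a commutative DG \emph{bialgebra}. Taking the polynomial-degree-$(\ell+m)$ parts of the shuffle multiplication and the deconcatenation comultiplication of $\overline{B}(\overline{B}(S^*(-)))$ yields global chain maps $J_\ell\otimes J_m\to J_{\ell+m}$ and $J_{\ell+m}\to J_\ell\otimes J_m$. These are automatically twist compatible because the entire bialgebra structure is built from the multiplication of $S^*$, which is twist compatible, and $\mathcal{TP}_k$ is additive and closed under tensor products. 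The degree-$0$ identifications (shuffle product, identity) then drop out of the explicit bar formulas. No injectivity or comparison argument is needed.
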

\begin{proof} The reduced bar construction yields a functor from the category of Commutative Differential Graded Augmented algebras 
over $k$ to the category of Commutative Differential Graded bialgebras over $k$, see \cite{ML}, resp. \cite{FHT}, for the algebra, 
resp.\ coalgebra, structure (this bialgebra structure is actually a Hopf algebra structure but we don't need this fact).
 
The category of strict polynomial functors splits as a direct sum of
subcategories of homogeneous functors. Taking the $(m+\ell)$ polynomial degree part of
the multiplication (resp. comultiplication) of 
$\overline{B}(\overline{B}(S^*(-)))$ we obtain chain maps $\bigoplus J_i^\bullet \otimes J_j^\bullet\to J_{\ell+m}^\bullet$ and 
$ J_{\ell+m}^\bullet\to \bigoplus J_i^\bullet \otimes J_j^\bullet$ (the sums
are taken over all nonnegative integers $i,j$ such that $i+j=\ell+m$). The
bialgebra structure of $\overline{B}(\overline{B}(S^*(-)))$ is defined using
only the \emph{algebra} structure of $S^*$. But the multiplication of $S^*$
is a twist compatible map and the twist compatible category is additive and
stable under tensor products \cite[Lemma 3.8 and 3.10]{Touze}. So the chain
maps are twist compatible. 

Next, we identify the chain maps in degree
$0$. We begin with the map $J_\ell^0\otimes J_m^0\to J_{\ell+m}^0$ 
induced by the multiplication of the bar
construction. By 
\cite[Lemma 3.18]{Touze}, for all $i\ge 1$ we have $J_i^0=\overline{B}_1(S^*(-))^{\otimes i}=\otimes^i$. The product $\overline{B}(\overline{B}(S^*(-)))^{\otimes 2}\to \overline{B}(\overline{B}(S^*(-)))$ is given by the shuffle product 
formula \cite[p.313]{ML}, more precisely it sends the tensor $\otimes_{i=1}^{m+\ell} x_i$ to the sum $\sum_{\sigma\in Sh(\ell,m)}\otimes_{i=1}^{m+\ell} x_{\sigma^{-1}(i)}$. The signs in this shuffle product are all positive since the $x_i$ are 
elements of degree $1+1=2$ in the chain complex 
$\overline{B}_\bullet(\overline{B}(S^*(-)))$. 
The identification of the map $J_{m+\ell}^0\to J_m^0\otimes J_\ell^0$ induced 
by the diagonal is simpler. The coproduct in  
$\overline{B}(\overline{B}(S^*(-)))$ is given by the 
deconcatenation formula \cite[p.268]{FHT}:
$\Delta[x_1|\dots|x_{\ell+m}]=\sum_{i=0}^{m+\ell}[x_1|\dots|x_{i}]\otimes [x_{i+1}|\dots|x_{m+\ell}]$. Thus, the map 
$J_{m+\ell}^0\to J_m^0\otimes J_\ell^0$ sends the tensor product $\otimes_{i=1}^{m+\ell} x_i$ to itself.

Finally, with the description of the chain maps in degree $0$, one easily
checks that $J_\ell\otimes J_m\to J_{\ell+m}$, resp. $J_{\ell+m}\to
J_\ell\otimes J_m$, lifts the multiplication $\Gamma^\ell\otimes\Gamma^m\to
\Gamma^{\ell+m}$, resp. the comultiplication $\Gamma^{\ell+m}
\to \Gamma^\ell\otimes\Gamma^m$. (In fact, this actually proves that the
quasi isomorphism 
$\Gamma^*\to \overline{B}(\overline{B}(S^*(-)))$ is a Hopf algebra
morphism)
\end{proof}

Applying the functor $A$, we obtain:
\begin{Lemma}\label{lm-map-bicompl-induced}
Let $\ell,m$ be positive integers. 
\begin{enumerate}
\item The multiplication $\Gamma^\ell(gl^{(1)})\otimes \Gamma^m(gl^{(1)})
\to \Gamma^{\ell+m}(gl^{(1)})$ lifts to a map of bicomplexes 
$A(J_\ell\otimes J_m)\to A(J_{\ell+m})$. The restriction of this map to the columns of index $0$ equals
$$A(J_\ell\otimes J_m)^{0,\bullet}=(A_1^{\otimes \ell+m})_{[1]}\xrightarrow[]{sh_{[1]}} 
(A_1^{\otimes \ell+m})_{[1]}= A(J_{\ell+m})^{0,\bullet}$$ 
where $sh$ is the unsigned shuffle map, which sends a tensor $\otimes_{i=1}^{m+\ell} x_i$ to the sum 
$\sum_{\sigma\in Sh(\ell,m)}\otimes_{i=1}^{m+\ell} x_{\sigma^{-1}(i)}$.
\item The diagonal $\Gamma^{\ell+m}(gl^{(1)})\to \Gamma^\ell(gl^{(1)})
\otimes \Gamma^m(gl^{(1)}) $ lifts to a twist compatible chain map 
$A(J_{\ell+m})\to A(J_\ell\otimes J_m)$. The restriction of this map to the columns of index $0$ equals the identity map of 
$(A_1^{\otimes \ell+m})_{[1]}$.
\end{enumerate}
\end{Lemma}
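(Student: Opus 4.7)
The plan is to apply the functor $A$ of \cite[Def.~3.17]{Touze} to the twist compatible chain maps provided by Lemma~\ref{lm-info-compl-res}. That lemma furnishes twist compatible chain maps $\mu: J_\ell\otimes J_m\to J_{\ell+m}$ lifting the multiplication on divided powers and $\Delta: J_{\ell+m}\to J_\ell\otimes J_m$ lifting the diagonal. Since $A$ is a functor on $Ch^{\ge 0}(\mathcal{TP}_k)$, applying it produces the desired maps of bicomplexes $A(\mu): A(J_\ell\otimes J_m)\to A(J_{\ell+m})$ and $A(\Delta): A(J_{\ell+m})\to A(J_\ell\otimes J_m)$. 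Naturality of $A$ with respect to the augmentations $\Gamma^\ell\hookrightarrow J_\ell$, $\Gamma^m\hookrightarrow J_m$ and $\Gamma^{\ell+m}\hookrightarrow J_{\ell+m}$ (which commute with $\mu$ and $\Delta$ by Lemma~\ref{lm-info-compl-res}) then shows that these bicomplex maps lift, respectively, the multiplication $\Gamma^\ell(gl^{(1)})\otimes\Gamma^m(gl^{(1)})\to\Gamma^{\ell+m}(gl^{(1)})$ and the diagonal $\Gamma^{\ell+m}(gl^{(1)})\to\Gamma^\ell(gl^{(1)})\otimes\Gamma^m(gl^{(1)})$.

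It then remains to identify the restriction of $A(\mu)$ and $A(\Delta)$ to the 0-th column. Recall that $A=(-\circ gl)\circ(-_{[1]})\circ T$, and that for any twist compatible chain complex $C$ the 0-th column of the bicomplex $A(C)$ is determined by the degree-$0$ term $C^0$ alone: it equals the precomposition by $gl$ of the contraction of the Troesch $p$-complex $T(C^0)$. Since $T$ commutes with tensor products and $(J_\ell\otimes J_m)^0=J_\ell^0\otimes J_m^0=\otimes^{\ell+m}=J_{\ell+m}^0$, both 0-th columns coincide with $(A_1^{\otimes(\ell+m)})_{[1]}$, matching the description recalled from \cite[Lemma~4.2]{Touze}. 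By Lemma~\ref{lm-info-compl-res}, the map $\mu^0$ is the unsigned shuffle map on $\otimes^{\ell+m}$ sending $\otimes_{i=1}^{\ell+m}x_i$ to $\sum_{\sigma\in Sh(\ell,m)}\otimes_{i=1}^{\ell+m}x_{\sigma^{-1}(i)}$, while $\Delta^0$ is the identity. Applying the (functorial) operations $T$, $-_{[1]}$ and $-\circ gl$ to $\mu^0$ and $\Delta^0$ then yields precisely the asserted shuffle map $\mathrm{sh}_{[1]}$ and the identity map on the 0-th columns.

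The only subtlety worth flagging is the fact, emphasized in the discussion preceding Lemma~\ref{lm-cup-prod-bicompl}, that $-_{[1]}$ does not commute with tensor products. This would be an obstacle if one were trying to identify $A(J_\ell)\otimes A(J_m)$ with $A(J_\ell\otimes J_m)$, but here we apply $A$ directly to the chain complexes $J_\ell\otimes J_m$ and $J_{\ell+m}$, so no such comparison is invoked. Beyond that, the argument is essentially bookkeeping: the substantive work has already been done in Lemma~\ref{lm-info-compl-res}, and the present lemma amounts to transporting that information through the (strictly functorial) construction of $A$.
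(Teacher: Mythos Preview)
Your proposal is correct and follows exactly the paper's approach: the lemma is stated in the paper immediately after the words ``Applying the functor $A$, we obtain:'', so the intended proof is precisely the functorial transport of Lemma~\ref{lm-info-compl-res} through $A$ that you spell out. Your added remarks on why the $0$-th column depends only on $J^0$ and why the non-commutation of $-_{[1]}$ with tensor products is irrelevant here are accurate and simply make explicit what the paper leaves implicit.
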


Next we identify cycles representing
the cup products $\psi_\ell(x)\cup \psi_m(y)$ in the bicomplex
$H^0_\mathcal{P}(A(J_\ell\otimes J_m))$.
\begin{Lemma}\label{lm-repres-cup-bicompl}
Let $x\in \Gamma^\ell H^*_\mathcal{P}(gl^{(1)})$ and $y\in \Gamma^m H^*_\mathcal{P}(gl^{(1)})$ be classes of homogeneous degrees 
$2d$ and $2e$.
Then $\vartheta_{\ell+m}(x\otimes y)$ is a cocycle of bidegree $(0,2d)$
in the bicomplex $H^0_\mathcal{P}(A(J_\ell\otimes J_m))$. Moreover, it
represents the cup product
$\psi_\ell(x)\cup \psi_m(y)\in H^*_\mathcal{P}(\Gamma^\ell(gl^{(1)})\otimes \Gamma^m(gl^{(1)}))$.
\end{Lemma}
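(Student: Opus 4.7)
The plan is to unravel both expressions in column $0$ of the bicomplex $A(J_\ell\otimes J_m)$ and identify them via the functorial constructions supplied by Lemmas~\ref{lm-cup-prod-bicompl} and~\ref{lm-map-bicompl-induced}. For the cocycle claim, observe that $\vartheta_{\ell+m}(x\otimes y)\in H^0_\mathcal{P}((A_1^{\otimes(\ell+m)})_{[1]})$ is an even-degree cocycle by Lemma~\ref{lm-vartheta}. Under the identification of $(A_1^{\otimes(\ell+m)})_{[1]}$ with column~$0$ of $A(J_{\ell+m})$, the diagonal lift furnished by Lemma~\ref{lm-map-bicompl-induced}(2) is the identity on column~$0$, so it transports $\vartheta_{\ell+m}(x\otimes y)$ into a cocycle of the appropriate bidegree (read as $(0,2(d+e))$) in $H^0_\mathcal{P}(A(J_\ell\otimes J_m))$.

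Next I would identify $\psi_\ell(x)\cup\psi_m(y)$ at the cochain level. Write $x$ and $y$ as $\Si_\ell$- and $\Si_m$-invariant sums of tensor products of generators $e_1(k)$. By construction, $\psi_\ell(x)$ and $\psi_m(y)$ are represented in $H^0_\mathcal{P}(A(J_\ell))$ and $H^0_\mathcal{P}(A(J_m))$ by the column-$0$ cocycles $\vartheta_\ell(x)$ and $\vartheta_m(y)$ respectively. Applying Lemma~\ref{lm-cup-prod-bicompl} with $F=\Gamma^\ell$ and $G=\Gamma^m$, the cup product $\psi_\ell(x)\cup\psi_m(y)$ is represented in $H^0_\mathcal{P}(\mathrm{Tot}(A(J_\ell\otimes J_m)))$ by the cocycle
$$(\vartheta_\ell(x)\otimes\vartheta_m(y))\circ\Delta_{p\ell,pm}\,,$$
which sits in bidegree $(0,2(d+e))$ of $A(J_\ell\otimes J_m)$.

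It then remains to verify $\vartheta_{\ell+m}(x\otimes y)=(\vartheta_\ell(x)\otimes\vartheta_m(y))\circ\Delta_{p\ell,pm}$ in column~$0$. By definition, $\vartheta_{\ell+m}(x\otimes y)=H^0_\mathcal{P}(h_{\ell+m})$ applied to the cup product $\cup$ on $H^*_\mathcal{P}(gl^{(1)})^{\otimes(\ell+m)}$, which manifestly factors through $\cup$ on each of the two blocks. Iterating the recursive definition $h_k=h_{A_1^{\otimes(k-1)},A_1}\circ(h_{k-1}\otimes h_1)$ and using the associativity built into the maps of \cite[Prop~2.7]{Touze} yields the factorisation
$$h_{\ell+m}=h_{A_1^{\otimes\ell},A_1^{\otimes m}}\circ(h_\ell\otimes h_m)\,.$$
Post-composing with the $gl$-precomposition functor and with $H^0_\mathcal{P}$, the outer map $h_{A_1^{\otimes\ell},A_1^{\otimes m}}$ is precisely the chain map whose effect on column~$0$ of $A(J_\ell)\otimes A(J_m)$ is, by Lemma~\ref{lm-map-bicompl-induced}(2) and Lemma~\ref{lm-cup-prod-bicompl}(1), precomposition with the divided-power diagonal $\Delta_{p\ell,pm}$. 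Combining the two displays gives the claimed equality.

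The main obstacle is the third step: establishing the associativity factorisation of $h_{\ell+m}$ and then matching $h_{A_1^{\otimes\ell},A_1^{\otimes m}}$ with precomposition by $\Delta_{p\ell,pm}$ once everything is transported through $-\circ gl$ and $H^0_\mathcal{P}$. Since $h_{\ell+m}$ is not $\Si_{\ell+m}$-equivariant (as emphasised in Lemma~\ref{lm-vartheta}), the identification must be carried out on the even-degree subobject $p(A_1,\dots,A_1)$ through which $\vartheta_{\ell+m}$ factors, where the Koszul signs entering the tensor product of complexes disappear and the comparison with the unsigned tensor structure of $p$-complexes becomes transparent.
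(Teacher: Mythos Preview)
Your proposal circles around the right objects but contains two genuine gaps, and it misses the short computation that the paper actually uses.

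\textbf{The cocycle claim.} Your first paragraph tries to deduce that $\vartheta_{\ell+m}(x\otimes y)$ is a bicocycle in $H^0_\mathcal{P}(A(J_\ell\otimes J_m))$ by transporting it along the diagonal lift $A(J_{\ell+m})\to A(J_\ell\otimes J_m)$. But a map of bicomplexes only sends \emph{bicocycles} to bicocycles, and Lemma~\ref{lm-vartheta}(1) only gives you a cocycle for the \emph{vertical} differential. To be a bicocycle in $A(J_{\ell+m})$ one also needs $\Si_{\ell+m}$-invariance (Lemma~\ref{lm-cocycles-col-1}), and $x\otimes y\in\Gamma^\ell\otimes\Gamma^m$ is in general only $\Si_\ell\times\Si_m$-invariant. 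So the transport argument fails. In the paper the cocycle claim is not proved separately; it falls out of the identification with $(\vartheta_\ell(x)\otimes\vartheta_m(y))\circ\Delta_{p\ell,pm}$, which is a cocycle by Lemma~\ref{lm-cup-prod-bicompl}(2).

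\textbf{The identification.} Your third step invokes an associativity $h_{\ell+m}=h_{A_1^{\otimes\ell},A_1^{\otimes m}}\circ(h_\ell\otimes h_m)$ that is not asserted in \cite[Prop.~2.7]{Touze}, and then claims that the outer $h$-map becomes ``precomposition with $\Delta_{p\ell,pm}$'' by Lemmas~\ref{lm-map-bicompl-induced}(2) and~\ref{lm-cup-prod-bicompl}(1). Neither lemma says this. The diagonal $\Delta_{p\ell,pm}$ enters through the \emph{cup product} on $H^0_\mathcal{P}$, not through any $h$-map; the $h$-maps act by postcomposition on targets. Even if the associativity of $h$ held, your formula would give $h_{A_1^{\otimes\ell},A_1^{\otimes m}}\circ(\vartheta_\ell(x)\otimes\vartheta_m(y))\circ\Delta_{p\ell,pm}$, not $(\vartheta_\ell(x)\otimes\vartheta_m(y))\circ\Delta_{p\ell,pm}$; you would still need to drop the outer $h$.

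\textbf{What the paper actually does.} The paper bypasses all of this with a one-line computation on basis tensors. The commutative square in the proof of Lemma~\ref{lm-vartheta} shows that each $h_k$ restricts to the identity on $p(A_1,\dots,A_1)$, so for $x=\bigotimes_{s=1}^\ell e_1(i_s)$ and $y=\bigotimes_{s=\ell+1}^{\ell+m} e_1(i_s)$ one has $\vartheta_k(\bigotimes e_1(i_s))=(\bigotimes z_{i_s})\circ\Delta_{p,\dots,p}$. Coassociativity of the divided-power diagonal then gives
\[
\vartheta_{\ell+m}(x\otimes y)=(\textstyle\bigotimes_{s=1}^{\ell+m} z_{i_s})\circ\Delta_{p,\dots,p}=(\vartheta_\ell(x)\otimes\vartheta_m(y))\circ\Delta_{p\ell,pm},
\]
and Lemma~\ref{lm-cup-prod-bicompl}(2) finishes both the cocycle and the representation claim simultaneously. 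Your last paragraph correctly identifies $p(A_1,\dots,A_1)$ as the right place to work, but once you are there the whole $h$-factorisation detour is unnecessary.
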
 
\begin{proof}By definition, $\psi_\ell(x)$ is represented 
by the homogeneous cocycle $\vartheta_\ell(x)$ of bidegree $(0,2d)$ in the bicomplex $\Hom(\Gamma^{p\ell}(gl),A(J_\ell))$ (and similarily for $\psi_m(y)$). 
Then, by lemma \ref{lm-cup-prod-bicompl}, 
$\psi_\ell(x)\cup \psi_m(y)$ is represented by the cocycle $(\vartheta_\ell(x)\otimes \vartheta_m(y))\circ \Delta_{\ell p, mp}$ 
in the bicomplex $\Hom(\Gamma^{p(\ell+m)}(gl),A(J_\ell\otimes J_m))$. 
Now if $x=\otimes_{s=1}^\ell(e(i_s))$ and $y=\otimes_{s=\ell+1}^m(e(i_s))$,
 we compute that  $\vartheta_{\ell+m}(x\otimes y)$ and 
$(\vartheta_\ell(x)\otimes \vartheta_m(y))\circ \Delta_{\ell p, mp}$ both
equal the element $(\otimes_{i=1}^{\ell+m}z_i)\circ\Delta_{p,\dots,p}$, where
$\Delta_{p,\dots,p}$ is the 
diagonal $\Gamma^{p(\ell+m)}(gl)\to \Gamma^p(gl)^{\otimes \ell+m}$.
\end{proof}

We are now ready to prove theorem \ref{thm-cl-univ}(3).
We begin with the commutativity of the diagram involving the multiplication. 
Let $x\in \Gamma^\ell H^*_\mathcal{P}(gl^{(1)})$ and $y\in \Gamma^m
H^*_\mathcal{P}(gl^{(1)})$ be homogeneous elements of respective degrees 
$2d$ and $2e$. By lemmas  \ref{lm-map-bicompl-induced} and 
\ref{lm-repres-cup-bicompl},  
$m_{\ell,m\,*}(\psi_\ell(x)\cup \psi_m(y))$ is represented by the cocycle 
$$\sum_{\sigma\in Sh(\ell,m)}\nolimits \sigma.\vartheta_{\ell+m}(x\otimes y)$$
of bidegree $(0, 2d+2e)$ in the bicomplex $H^0_\mathcal{P}(A(J_{\ell+m}))$. 
By definition of 
$\psi_{\ell+m}$, $\psi_{\ell+m}(m_{\ell,m}(x\otimes y))$ is 
represented by the cocycle 
$$\vartheta_{\ell+m}\left(\sum_{\sigma\in Sh(\ell,m)}\nolimits \sigma.(x\otimes y)\right) $$
in the same bicomplex. Since $\vartheta_{\ell+m}$ is equivariant (lemma
\ref{lm-vartheta}), these two 
cocycles are equal. 
Hence, the diagram involving the multiplication is commutative.
The diagram involving the comultiplication commutes for a similar reason: if $x\in \Gamma^{\ell+m} 
H^*_\mathcal{P}(gl^{(1)})$, the cohomology classes 
$(\psi_\ell\cup \psi_m)(\Delta_{\ell,m}(x))$ and $\Delta_{\ell,m\,*}(\psi_{\ell+m}(x))$ are both represented by the 
cycle $\vartheta_{\ell+m}(x)$. This concludes the proof of theorem \ref{thm-cl-univ}(3).

\subsection{Proof of theorem \ref{thm-cl-univ}(2)}

To prove theorem \ref{thm-cl-univ}(2), it suffices to prove for all $n\ge p$ the injectivity of the composite:
\begin{align*}\Gamma^\ell H^*_{\mathcal{P}}(gl^{(1)})\xrightarrow[]{\psi_\ell} H^*_{\mathcal{P}}(\Gamma^\ell(gl^{(1)}))\xrightarrow[]{\phi_{\Gamma^\ell(gl^{(1)}),n}} 
&H^*(GL_{n,k},\Gamma^\ell(\mathfrak{gl}_n^{(1)}))\\&\xrightarrow[]{\Delta_{1,\dots,1\,*}} 
H^*(GL_{n,k},\mathfrak{gl}_n^{(1)\,\otimes \ell})\;.
\end{align*}
By naturality of the maps $\phi_{\Gamma^\ell(gl^{(1)}),n}$ \cite[Th 1.3]{Touze} and by the compatibility of the $\phi_i$ with 
diagonals and cup products given in theorem \ref{thm-cl-univ}(3), this composite equals the composite:
\begin{align*}\Gamma^\ell H^*_{\mathcal{P}}(gl^{(1)})\hookrightarrow H^*_{\mathcal{P}}(gl^{(1)})^{\otimes \ell}
%\xrightarrow[]{(\phi_{gl^{(1)},n})^{\otimes\ell}}
\to H^*(GL_{n,k},\mathfrak{gl}_n^{(1)})^{\otimes\ell}\xrightarrow[]{\cup} H^*(GL_{n,k},\mathfrak{gl}_n^{(1)\,\otimes \ell}) 
\end{align*} 
Thus, the proof of theorem \ref{thm-cl-univ}(2) follows from:
\begin{Lemma} Let $k$ be a field of characteristic $p>0$ and let $j\ge 1$ be an integer. For all 
$n\ge p$, the following map is injective:
$$\begin{array}[t]{cccc}
\cup_{i=1}^j \phi_{gl^{(1)},n}:& H^*_{\mathcal{P}}(gl^{(1)})^{\otimes j}  &\to &  
H^*(GL_{n,k},(\mathfrak{gl}_n^{(1)})^{\otimes j})\\
& \otimes_{i=1}^j c_i&\mapsto & \cup_{i=1}^j \phi_{gl^{(1)},n}(c_i)
\end{array}.
$$  
\end{Lemma}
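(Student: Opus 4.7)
The plan is to reduce to a one-parameter subgroup, making the Frobenius-twisted coefficients trivial. Since $H^*_{\mathcal{P}}(gl^{(1)})$ is concentrated in even degrees $2i$ for $0\le i<p$, with one-dimensional pieces spanned by the $e_1(i)$, a basis of $H^*_{\mathcal{P}}(gl^{(1)})^{\otimes j}$ is indexed by multi-indices $(i_1,\ldots,i_j)\in\{0,\ldots,p-1\}^j$; the map in question sends this basis to the cup products $\phi(e_1(i_1))\cup\cdots\cup\phi(e_1(i_j))\in H^{2(i_1+\cdots+i_j)}(\GL_{n,k},\gl_n^{(1)\otimes j})$, and the goal is to show these $p^j$ classes are linearly independent.

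For $n\ge p$, choose a regular nilpotent $N\in\gl_n$ consisting of a single Jordan block of size $p$ (padded by zeros), so $N^p=0$. Let $\rho:\alpha_p\to\GL_{n,k}$ be the closed embedding $t\mapsto 1+tN$, where $\alpha_p=\ker(F:\Ga\to\Ga)$. Since $F$ vanishes identically on $\alpha_p$, the $\alpha_p$-action via $\rho$ on any Frobenius twist is trivial, and hence
$$H^*(\alpha_p,\gl_n^{(1)\otimes j})\;\cong\;H^*(\alpha_p,k)\otimes\gl_n^{(1)\otimes j}$$
as graded $H^*(\alpha_p,k)$-modules. The even-degree part of $H^*(\alpha_p,k)$ is the polynomial algebra $k[y]$ with $|y|=2$, so each $\mathrm{res}\,\phi(e_1(i))$ takes the form $y^i\otimes v_i$ for a unique $v_i\in\gl_n^{(1)}$. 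By multiplicativity of cup product and restriction, a basis tensor $e_1(i_1)\otimes\cdots\otimes e_1(i_j)$ restricts to $y^{i_1+\cdots+i_j}\otimes(v_{i_1}\otimes\cdots\otimes v_{i_j})$.

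Classes of different total $y$-degree are trivially linearly independent, and within a fixed total degree the linear independence of the tensors $v_{i_1}\otimes\cdots\otimes v_{i_j}$ reduces, by basic multilinear algebra, to the linear independence of $v_0,v_1,\ldots,v_{p-1}$ in $\gl_n$. The key point is that the Suslin-Friedlander-Bendel computation of the restriction of universal Frobenius-twist cohomology classes along infinitesimal one-parameter subgroups identifies $v_i$ with a nonzero scalar multiple of $N^i$; since $N$ is a size-$p$ Jordan block, the powers $I=N^0,N,N^2,\ldots,N^{p-1}$ are linearly independent in $\gl_n$, and this is exactly where $n\ge p$ is used.

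The main obstacle is this last identification of $v_i$ with $N^i$ up to nonzero scalar. It relies on matching the bifunctor-cohomology generators $\phi(e_1(i))$ with the standard Suslin-Friedlander-Bendel generators in $H^{2i}(\GL_{n,k},\gl_n^{(1)})$; that $\phi(e_1(i))$ is nonzero follows from the injectivity of $\phi_{gl^{(1)},n}$ for $n\ge p$ from \cite[Thm 1.3]{Touze}, and the explicit one-parameter subgroup restriction formulae of Suslin-Friedlander-Bendel then pin down $v_i$ up to a nonzero scalar.
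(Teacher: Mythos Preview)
Your argument is essentially the same as the paper's: restrict to the infinitesimal one-parameter subgroup $\Ga_1=\alpha_p$ associated to a $p$-nilpotent matrix, use triviality of the Frobenius-twisted coefficients there, and invoke the Suslin--Friedlander--Bendel formula sending $e_1(i)$ to $x_1^i\otimes(\alpha^{(1)})^i$ to conclude linear independence from that of $1,\alpha,\dots,\alpha^{p-1}$. One small correction: the map $t\mapsto 1+tN$ is not a group homomorphism unless $N^2=0$; you want the truncated exponential $\exp_N(t)=\sum_{i<p}t^iN^i/i!$ as in the paper (and ``regular nilpotent'' is a misnomer here when $n>p$, though your explicit description makes the intent clear).
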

\begin{proof} We prove this lemma by reducing our cohomology classes to an infinitesimal one parameter subgroup 
$\Ga_{1}$ of $GL_{n,k}$, as it is done in \cite{SFB}. 
Since $n\ge p$, we can find a $p$-nilpotent matrix $\alpha\in\mathfrak{gl}_n$. Using this matrix, we define an embedding 
$\Ga_1\to \Ga\xrightarrow[]{exp_\alpha} GL_{n,k}$. For all $\ell$, this embedding makes the $GL_{n,k}$-module 
$\mathfrak{gl}_n^{(1)\otimes \ell}$ into a trivial $\Ga_1$-module. 
Thus there is an isomorphism
$H^*(\Ga_1,\mathfrak{gl}_n^{(1)\otimes \ell})\simeq H^*(\Ga_1,k)\otimes \mathfrak{gl}_n^{(1)\otimes \ell}$.   
The algebra $H^*(\Ga_{1},k)$ is computed in \cite{CPSvdK}. In particular, $H^{even}(\Ga_{1},k)=k[x_1]$ is a polynomial 
algebra on one generator $x_1$ of degree $2$. Let's thrash out the compatibility of this isomorphism with the cup product. 
If $x_1^{\ell}\otimes \beta_\ell$ and $x_1^{m}\otimes \beta_m$ are classes in $H^*(\Ga_1,k)\otimes 
\mathfrak{gl}_n^{(1)\otimes \ell}$, resp. $H^*(\Ga_1,k)\otimes \mathfrak{gl}_n^{(1)\otimes m}$, their cup product is the class $x_1^{\ell+m}\otimes (\beta_\ell\otimes\beta_m)$ in $H^*(\Ga_1,k)\otimes \mathfrak{gl}_n^{(1)\otimes \ell+m}$.

%By \cite[prop 1.3]{Franjou-Friedlander} and \cite[Th. 4.5]{Friedlander-Suslin},
We recall that $H^*_{\mathcal{P}}(gl^{(1)})$ is a graded module 
with basis the classes $e_1(i)$ of degree $2i$ for $0\le i< p$.
By \cite[Th 4.9]{SFB}, the composite
$$H^*_{\mathcal{P}}(gl^{(1)})\xrightarrow[\simeq]{\phi_{gl^{(1)},n}} H^*(GL_{n,k},\mathfrak{gl}_n^{(1)})\to H^*(\Ga_1,\mathfrak{gl}_n^{(1)})\simeq H^*(\Ga_1,k)\otimes \mathfrak{gl}_n^{(1)} $$ 
sends $e_1(i)$ to the class $x_1^i\otimes (\alpha^{(1)})^i\in H^*(\Ga_1,k)\otimes \mathfrak{gl}_n^{(1)}$. Since restriction to 
$\Ga_{1}$ is compatible with cup products, the composite
$$H^*_{\mathcal{P}}(gl^{(1)})^{\otimes j} \to   H^*(GL_{n,k},\mathfrak{gl}_n^{(1)\otimes j}) \to H^*(\Ga_1,k)\otimes \mathfrak{gl}_n^{(1)\otimes j} $$
sends the tensor product $\otimes_{\ell=1}^j e(i_\ell)$ to the class $x_1^{\sum_{\ell=1}^j i_\ell}\otimes (\otimes_{\ell=1}^j (\alpha^{(1)})^{i_\ell})$. 
As a result, this composite sends the basis $\left(\otimes_{i=\ell}^j e(i_\ell)\right)_{(i_1,\dots,i_\ell)}$ into the linearly 
independent
 family $\left(x_1^m \otimes (\otimes_{\ell=1}^j (\alpha^{(1)})^{i_\ell})\right)_{(m,i_1,\dots,i_\ell)}$.
Hence the map  $\cup_{i=1}^j \phi_{gl^{(1)},n}$ is injective.
\end{proof}

\part{The second proof}

\section{The starting point}
Many notations are as in \cite{cohGrosshans}. 
 We work over a field $k$ of positive characteristic $p$. Fix an integer $n$, with $n>1$.
 The important case is when $n$ is large. So if one finds this convenient, one may take $n\geq p$.
We wish to draw conclusions from the following result.
\begin{Theorem}[Lifted universal cohomology classes \cite{Touze}]\label{divided powers}
There are cohomology classes $c[m]$
so that
\begin{enumerate}
\item $c[1]\in H^{2}(\GL_{n,k},\gl_n^{(1)})$ is nonzero,
\item For $m\geq1$ the class $c[m]\in H^{2m}(\GL_{n,k},\Gamma^{m}(\gl_n^{(1)}))$ lifts $c[1]\cup\cdots\cup c[1]\in H^{2m}(\GL_{n,k},\bigotimes^{m}(\gl_n^{(1)}))$.
\end{enumerate}
\end{Theorem}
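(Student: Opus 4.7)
The plan is to construct the $c[m]$ inside strict polynomial bifunctor cohomology and then push them forward along the natural transformation $\phi_{B,n} : H^*_{\mathcal{P}}(B) \to H^*(\GL_{n,k}, B(k^n,k^n))$ of \cite[Thm 1.3]{Touze}. This way the entire construction is carried out in the category $\mathcal{P}_k(1,1)$ of Franjou--Friedlander, where explicit Troesch-type coresolutions are available, and the $\GL_{n,k}$-classes drop out for free. By the reduction trick of \cite[Lemma 1.5]{Touze} I may assume throughout that $n \ge p$.

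First I would produce $c[1]$. The group $H^2_{\mathcal{P}}(gl^{(1)}) = \Ext^2_{\mathcal{P}_k}(I^{(1)}, I^{(1)})$ is one-dimensional by Friedlander--Suslin, generated by the Witt vector class $e_1$, and $\phi_{gl^{(1)},n}$ is an isomorphism in this degree for $n \ge p$. Setting $c[1] := \phi_{gl^{(1)},n}(e_1)$ yields a nonzero class in $H^2(\GL_{n,k}, \gl_n^{(1)})$.

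For $m \ge 2$ the plan is to build a bifunctor class in $H^{2m}_{\mathcal{P}}(\Gamma^m(gl^{(1)}))$ representing the desired lift, and then define $c[m]$ as its image under $\phi_{\Gamma^m(gl^{(1)}),n}$. Fix a twist-compatible coresolution $J_m$ of $\Gamma^m$ coming from the bar construction on $S^*$ as in \cite[Prop 3.21]{Touze}, and apply the functor $A$ of \cite[Def 3.17]{Touze} to obtain a bicomplex $A(J_m)$ whose totalization is $H^*_{\mathcal{P}}$-acyclic. The first two columns identify, after contracting the $p$-complex, with the augmented complex
$$H^0_{\mathcal{P}}((A_1^{\otimes m})_{[1]}) \xrightarrow{\prod(1-\tau_i)} \bigoplus_{i=0}^{m-2} H^0_{\mathcal{P}}((A_1^{\otimes m})_{[1]}),$$
so cocycles in bidegree $(0,d)$ correspond precisely to $\Sigma_m$-invariant classes. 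I would represent $e_1$ by an explicit cycle $z \in H^0_{\mathcal{P}}(A_{1[1]})$, form the cup product $z^{\otimes m}$ on the cochain level via \cite[Prop 3.3]{Touze}, and push it along the iterated chain map $h_m : (A_{1[1]})^{\otimes m} \to (A_1^{\otimes m})_{[1]}$ of \cite[Prop 2.7]{Touze} to obtain the candidate cocycle. The lifting property---that the image of $c[m]$ in $H^{2m}(\GL_{n,k}, \otimes^m \gl_n^{(1)})$ equals $c[1]^{\cup m}$---then follows from naturality of $\phi$ together with the comparison $A(J_1)^{\otimes m} \to A(J_1^{\otimes m})$ of \cite[Prop 2.4]{Touze}, which is the identity on the relevant bidegree components.

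The main obstacle is verifying that this candidate cocycle is genuinely $\Sigma_m$-equivariant and so descends to a class in $\Gamma^m$-cohomology rather than merely in $\otimes^m$-cohomology. The subtlety is that $\Sigma_m$ acts with Koszul signs on $(A_{1[1]})^{\otimes m}$ but without signs on $(A_1^{\otimes m})_{[1]}$, and $h_m$ does not intertwine these two actions in general. The saving observation is that $z^{\otimes m}$ is supported on the subobject $p(A_1, \ldots, A_1) = \bigoplus \bigotimes_{s=1}^m A_1^{i_s p}$ of tensors whose components lie in degrees divisible by $p$; these degrees are all even, so the Koszul signs are trivial on this subcomplex and $h_m$ restricts to an equivariant map there. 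This is where the prime $p$ enters the construction in an essential way, and verifying this concentration phenomenon is the technical heart of the proof.
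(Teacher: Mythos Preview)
Your proposal is correct and follows essentially the same route as the paper's Section~4: you use the bicomplex $A(J_m)$, identify cocycles in the first column with $\Si_m$-invariants (the paper's Lemma~\ref{lm-cocycles-col-1}), build the candidate via $h_m$ applied to $z^{\otimes m}$, and resolve the equivariance issue by observing that the image factors through the even-degree subobject $p(A_1,\dots,A_1)$ on which the two $\Si_m$-actions agree (the paper's Lemma~\ref{lm-vartheta}). The one point you pass over lightly is the lifting property: besides the comparison $A(J_1)^{\otimes m}\to A(J_1^{\otimes m})$ you also need an explicit bicomplex map $A(J_m)\to A(J_1^{\otimes m})$ lifting the diagonal $\Gamma^m\to\otimes^m$, which the paper extracts from the coalgebra structure of the iterated bar construction (Lemmas~\ref{lm-info-compl-res}--\ref{lm-repres-cup-bicompl}); once that map is seen to be the identity on the index-$0$ column, your argument goes through.
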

\begin{Remark}We really have in mind that $c[1]$ is the Witt vector class of \cite[section 4]{cohGrosshans},
which is certainly nonzero. 
The computation of $H^{2}(\GL_{n,k},\gl_n^{(1)})$ is
easy, using \cite[Corollary (3.2)]{CPSvdK}.
One finds that $H^{2}(\GL_{n,k},\gl_n^{(1)})$ is  one dimensional. Thus any nonzero $c[1]$ is up to scaling
equal to the Witt vector class. 
\end{Remark}
\section{Using the classes}
We write $G$ for  $\GL_{n,k}$, the algebraic group $\GL_n$ over $k$. Sometimes it is instructive to restrict to $\SL_n$ or other reductive
subgroups of $\GL_n$. We leave this to the the reader.

\subsection{Other universal classes}
We recall some constructions from \cite{cohGrosshans}.
If $M$ is a finite dimensional vector space over $k$ and
$r\geq1$, we have a natural
homomorphism between symmetric algebras $S^*(M^{\#(r)})\to S^{*}(M^{\#(1)})$
induced by the
map $M^{\#(r)}\to S^{p^{r-1}}(M^{\#(1)})$ which raises an element to the power
$p^{r-1}$. It is a map of bialgebras.
Dually we have the bialgebra map
$\pi^{r-1}:\Gamma^{*}(M^{(1)})\to \Gamma^{*}(M^{(r)})$ whose kernel is the
ideal generated by $\Gamma^1(M^{(1)})$ through $\Gamma^{p^{r-1}-1}(M^{(1)})$.
So $\pi^{r-1}$ maps $\Gamma^{p^{r-1}a}(M^{(1)})$ onto $\Gamma^{a}(M^{(r)})$.

\begin{Notation} We now introduce analogues of the classes $e_r$ and
$e_r^{(j)}$ of Friedlander and Suslin \cite[Theorem 1.2,
Remark 1.2.2]{Friedlander-Suslin}.
We write $\pi^{r-1}_*(c[ap^{r-1}])\in
H^{2ap^{r-1}}(G,\Gamma^{a}(\gl_n^{(r)}))$
as $c_r[a]$. Next we get
$c_r[a]^{(j)}\in H^{2ap^{r-1}}(G,\Gamma^{a}(\gl_n^{(r+j)}))$
by Frobenius twist.
As in \cite{Friedlander-Suslin} a notation like
$S^*(M(i))$ means the symmetric algebra $S^*(M)$, but
graded, with $M$ placed in degree $i$.
\end{Notation}

Here is the analogue of \cite[Lemma 4.7]{cohGrosshans}.

\begin{Lemma}\label{cup}
The $c_i[a]^{(r-i)}$ enjoy the following properties ($r\geq i\geq1$)
\begin{enumerate}
\item \label{cuphom} There is a homomorphism of graded algebras
$S^*(\gl_n^{\#(r)}(2p^{i-1}))\to H^{2p^{i-1}*}(G_{r},k)$
given on $\gl_n^{\#(r)}(2p^{i-1})=H^0(G_{r},\gl_n^{\#(r)})$
by cup product with the restriction of
$c_i[1]^{(r-i)}$ to $G_r$. If $i=1$, then  it
is given on $S^a(\gl_n^{\#(r)}(2))=H^0(G_{r},S^a(\gl_n^{\#(r)}))$
by cup product with the restriction of
$c[a]^{(r-1)}$ to $G_r$.
\item \label{new e_r} For $r\geq1$ the restriction of
$c_r[1]$ to $ H^{2p^{r-1}}(G_{1},\gl_n^{(r)})$ is nontrivial,
so that $c_r[1]$ may serve as the universal class $e_r$ in
\cite[Thm 1.2]{Friedlander-Suslin}.
\end{enumerate}
\end{Lemma}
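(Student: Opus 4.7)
For part (1), existence of the graded algebra homomorphism is essentially formal. Since $c_i[1]^{(r-i)}|_{G_r}$ lives in the even-degree cohomology $H^{2p^{i-1}}(G_r, \gl_n^{(r)})$, cup products involving this class introduce no Koszul signs, so the universal property of the symmetric algebra extends the assignment $f \mapsto f_*(c_i[1]^{(r-i)}|_{G_r})$ from $H^0(G_r, \gl_n^{\#(r)}) = \gl_n^{\#(r)}$ uniquely to a graded algebra map $S^*(\gl_n^{\#(r)}(2p^{i-1})) \to H^{2p^{i-1}*}(G_r, k)$.

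For the $i = 1$ refinement, evaluating this algebra map on a pure product $f_1 \cdots f_a$ of linear forms in $S^a(\gl_n^{\#(r)})$ gives, by the compatibility of cup products with pushforward, the class $(f_1 \otimes \cdots \otimes f_a)_*(c^{\cup a})$ where $c := c[1]^{(r-1)}|_{G_r}$. Theorem~\ref{divided powers}(2), after Frobenius twist and restriction, rewrites $c^{\cup a}$ as $\iota_*(c[a]^{(r-1)}|_{G_r})$, where $\iota : \Gamma^a(\gl_n^{(r)}) \hookrightarrow (\gl_n^{(r)})^{\otimes a}$ is the standard inclusion; the class thus becomes $((f_1 \otimes \cdots \otimes f_a) \circ \iota)_*(c[a]^{(r-1)}|_{G_r})$. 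The duality identification $(f_1 \otimes \cdots \otimes f_a) \circ \iota = f_1 \cdots f_a$ in $\Gamma^a(\gl_n^{(r)})^\# = S^a(\gl_n^{\#(r)})$, which comes from $\iota(\gamma_a(v)) = v^{\otimes a}$ and the pairing formula $\prod_j f_j(v)$ on both sides, then yields $f_*(c[a]^{(r-1)}|_{G_r})$ for $f = f_1 \cdots f_a$; linearity in $S^a$ completes the argument.

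For part (2), the strategy is detection via restriction to an infinitesimal one-parameter subgroup, following \cite{SFB}. Fix a nonzero $p$-nilpotent matrix $\alpha \in \gl_n$ (available since $n > 1$; e.g.\ $\alpha = E_{12}$) and embed $\Ga_1 \hookrightarrow G_1 \subseteq \GL_{n,k}$ via $\exp_\alpha$. Since $\Ga_1$ acts trivially on every $\gl_n^{(m)}$ ($m \geq 1$), there is a K\"unneth splitting $H^*(\Ga_1, \Gamma^m(\gl_n^{(1)})) = H^*(\Ga_1, k) \otimes \Gamma^m(\gl_n^{(1)})$ under which $\iota_*$ is just $\mathrm{id} \otimes \iota$, hence injective over the field $k$. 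By \cite[Thm 4.9]{SFB}, combined with the Remark identifying $c[1]$ with a nonzero scalar multiple of the Witt vector class, $c[1]|_{\Ga_1} = x_1 \otimes \alpha^{(1)}$ up to nonzero scalar, where $x_1 \in H^2(\Ga_1, k)$ is the standard generator. Taking cup products gives $c[1]^{\cup p^{r-1}}|_{\Ga_1} = x_1^{p^{r-1}} \otimes (\alpha^{(1)})^{\otimes p^{r-1}}$, and since $\iota(\gamma_{p^{r-1}}(\alpha^{(1)})) = (\alpha^{(1)})^{\otimes p^{r-1}}$, the injectivity of $\iota_*$ forces $c[p^{r-1}]|_{\Ga_1} = x_1^{p^{r-1}} \otimes \gamma_{p^{r-1}}(\alpha^{(1)})$. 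Pushing forward by $\pi^{r-1}$ produces $c_r[1]|_{\Ga_1} = x_1^{p^{r-1}} \otimes \pi^{r-1}(\gamma_{p^{r-1}}(\alpha^{(1)}))$; the final factor equals $\sum_i c_i^{p^{r-1}} e_i^{(r)}$ when $\alpha = \sum c_i e_i$, and this is nonzero since the Frobenius map $c \mapsto c^{p^{r-1}}$ is injective on a field of characteristic $p$. Nonvanishing on $\Ga_1$ forces nonvanishing on $G_1$.

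The main obstacle is the unique-lifting step: pinning down $c[p^{r-1}]|_{\Ga_1}$ from its image under $\iota_*$ requires injectivity of the induced cohomology map, and restricting first to $\Ga_1$ supplies this essentially for free (trivial coefficient action reduces $\iota_*$ to $\mathrm{id} \otimes \iota$), whereas the analogous statement over $G_1$ directly would be substantially harder. The computation of $\pi^{r-1}$ on divided powers---based on its characterization as the bialgebra dual of the $p^{r-1}$-power map, which yields $\pi^{r-1}(\gamma_{p^{r-1}}(e_i^{(1)})) = e_i^{(r)}$ on basis vectors and kills every monomial of degree $p^{r-1}$ with two or more factors---and the duality identification in the $i = 1$ case of part (1) are both routine bookkeeping once the setup is clear.
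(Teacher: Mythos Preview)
Your proof is correct and follows essentially the same strategy as the paper's: Part~1 via Theorem~\ref{divided powers}(2) and the $\Gamma/S$ duality, Part~2 via restriction to $\Ga_1$. The paper's only notable shortcut in Part~2 is to project further onto a one-dimensional weight space $(\gl_n^{(r)})_{p^r\alpha}$, where $\Gamma^m\hookrightarrow\bigotimes^m$ becomes an isomorphism outright, bypassing your explicit computation of $\pi^{r-1}(\gamma_{p^{r-1}}(\alpha^{(1)}))$.
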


\paragraph{Proof}
\ref{cuphom}.
When $M$ is a $G$-module, one has a commutative diagram
$$\begin{array}{ccc}
\Gamma^m M\otimes \bigotimes^mM^\# & {\rightarrow} &\bigotimes  ^m M\otimes \bigotimes^mM^\#\\[.3em]
\qquad \downarrow& & \downarrow\\[.3em]
\Gamma^m M\otimes S^mM^\# & \rightarrow& k\\
\end{array}$$
Take $M=\gl_n^{(1)}$.
There is a homomorphism of algebras $\bigotimes^*(\gl_n^{\#(1)})\to H^{2*}(G_{1},k)$
given on $\gl_n^{\#(1)}$ by cup product with $c[1]$. (We do not mention obvious restrictions to
subgroups like  $G_1$
any more.)
 On $\bigotimes^m(\gl_n^{\#(1)})$
it is given by cup product with $c[1]\cup\cdots\cup c[1]$, so by Theorem \ref{divided powers}
it is also given by cup product with $c[m]$, using the pairing $\Gamma^m M\otimes \bigotimes^mM^\#\to k$. 
As this pairing factors through $\Gamma^m M\otimes S^mM^\#$ we get that the induced algebra map
$S^*(\gl_n^{\#(1)})\to H^{2*}(G_{1},k)$ is  given by cup product with $c[m]$ on $S^m(\gl_n^{\#(1)})$.
If we compose with the algebra map $S^*(\gl_n^{\#(i)})\to S^{p^{i-1}*}(\gl_n^{\#(1)})$ we
get an algebra map $\psi:S^*(\gl_n^{\#(i)})\to H^{2p^{i-1}*}(G_{1},k)$ given on $\gl_n^{\#(i)}$
by cup product with $c[p^{i-1}]$, using the pairing $\gl_n^{\#(i)}\otimes \Gamma^{p^{i-1}}(\gl_n^{(1)})\to k$. 
This pairing 
factors through 
$\id\otimes\pi^{i-1}:\gl_n^{\#(i)}\otimes \Gamma^{p^{i-1}}(\gl_n^{(1)})\to \gl_n^{\#(i)}\otimes 
\gl_n^{(i)}$, so the homomorphism $\psi$ is given on $\gl_n^{\#(i)}$ by cup product
with $\pi^{i-1}_*c[p^{i-1}]=c_i[1]$. We can lift it to an algebra map $S^*(\gl_n^{\#(i)})\to H^{2p^{i-1}*}(G_{i},k)$
by simply still using cup product with $c_i[1]$ on $\gl_n^{\#(i)}$.
Pull back along the $(r-i)$-th Frobenius homomorphism $G_r\to G_i$ 
and you get an algebra map $\psi^{(r-i)}:S^*(\gl_n^{\#(r)})\to H^{2p^{i-1}*}(G_{r},k)$, given on
 $\gl_n^{\#(r)}$ by cup product with $c_i[1]^{(r-i)}$. If $i=1$, pull back the cup product with $c[m]$ on $S^m(\gl_n^{\#(1)})$
 to a cup product with  $c[m]^{(r-1)}$ on $S^m(\gl_n^{\#(r)}(2))$. This then describes the homomorphism $\psi^{(r-1)}$
 degree-wise.
\\
\ref{new e_r}. In fact if we restrict $c_r[1]$ as in remark
\cite[4.1]{cohGrosshans} to
$H^{2p^{r-1}}(\Ga_{1},(\gl_n^{(r)})_{p^r\alpha})=
H^{2p^{r-1}}(\Ga_{1},k)\otimes(\gl_n^{(r)})_{p^r\alpha}$, then even
that restriction is nontrivial. That is because the Witt
vector class generates the polynomial ring $H^\even(\Ga_{1},k)$, see
\cite[I 4.26]{Jantzen}. And at this level $\Gamma^m\hookrightarrow \bigotimes^m$ gives an isomorphism,
showing that $c[m]$ restricts to the $m$-th power of the polynomial generator.
\qed\\

%\begin{Remark}
%Indeed \cite{Touze} provides a reasonable alternative construction of universal classes $e_r$ that satisfy
% the requirements of Friedlander and Suslin. 
%\end{Remark}

\subsection{Noetherian homomorphisms}
Let $A$ be a commutative $k$-algebra. The cohomology algebra  $H^*(G,A)$
is then graded commutative, so we must also consider graded commutative algebras.
\begin{Definition}
If $f:A\to B$ is a homomorphism of graded commutative $k$-algebras, we call $f$ \emph{noetherian} if
$f$ makes $B$ into a noetherian left $A$ module.
\end{Definition}

\begin{Remark}
In algebraic geometry a noetherian homomorphism between finitely generated  commutative $k$-algebras
is called a \emph{finite} morphism.
With our terminology we wish to stress the importance of chain conditions in our arguments.
\end{Remark}

\begin{Lemma}
The composite of noetherian homomorphisms is noetherian.
\end{Lemma}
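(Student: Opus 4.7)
The plan is to reduce the statement to two classical facts about noetherian modules: finite direct sums of noetherian modules are noetherian, and quotients of noetherian modules are noetherian.

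Let $f:A\to B$ and $g:B\to C$ be noetherian. First I would observe that since $C$ is noetherian as a $B$-module, it is in particular finitely generated over $B$; choose homogeneous generators $c_1,\dots,c_n\in C$. The map $B^n\to C$ sending $(b_1,\dots,b_n)$ to $\sum b_i c_i$ is then a surjection of graded $B$-modules, and, via $f$, also a surjection of graded $A$-modules.

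Next, because $f$ is noetherian, $B$ itself is a noetherian $A$-module, hence so is the finite direct sum $B^n$. Since $C$ is an $A$-module quotient of the noetherian $A$-module $B^n$, it inherits the ascending chain condition on (graded) $A$-submodules, so $C$ is a noetherian $A$-module. This is precisely the assertion that $g\circ f$ is noetherian.

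I do not expect any real obstacle; the only thing to be slightly careful about is that everything here takes place in the graded commutative setting, so one should work with graded submodules and homogeneous generators throughout. Since graded versions of "finite direct sum of noetherian is noetherian" and "quotient of noetherian is noetherian" hold by the same formal argument as in the ungraded case, the proof is essentially a one-line reduction.
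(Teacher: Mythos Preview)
Your proof is correct and follows exactly the same approach as the paper: view $C$ as a quotient of some $B^n$ and use that $B^n$ is a noetherian $A$-module. The paper compresses this into a single sentence, but the content is identical.
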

\paragraph{Proof} If $A\to B$ and $B\to C$ are noetherian, view $C$ as a quotient of
the module $B^r$ for some $r$.
\qed

\begin{Lemma}\label{second noeth}
If the composite of $A\to B$ and $B\to C$ is noetherian, so is $B\to C$.
\end{Lemma}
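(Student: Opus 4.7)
The plan is to observe that every $B$-submodule of $C$ is automatically an $A$-submodule via the homomorphism $f : A \to B$, so noetherianness as an $A$-module restricts to noetherianness as a $B$-module.

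In more detail, I would first unwind the definition: call the two maps $f : A \to B$ and $g : B \to C$, and write $h = g \circ f : A \to C$. The hypothesis says $C$ is noetherian as an $A$-module where $A$ acts via $h$. I would then take an ascending chain $M_0 \subseteq M_1 \subseteq \cdots$ of graded $B$-submodules of $C$. Since the $A$-action on $C$ factors through the $B$-action, each $M_i$ is also a graded $A$-submodule of $C$. By the noetherian hypothesis, the chain stabilizes as a chain of $A$-submodules, and hence as a chain of $B$-submodules. So $C$ is a noetherian $B$-module, proving the lemma.

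I don't anticipate any genuine obstacle here; the only thing to be mildly careful about is that we are working with graded commutative algebras, so the ``submodules'' in question should be graded submodules and the noetherian condition should be interpreted in the graded sense (the argument is identical in either case). One could equivalently phrase the proof in terms of finite generation: any $B$-submodule of $C$ is finitely generated over $A$ by hypothesis, hence a fortiori finitely generated over $B$ via the map $A \to B$, which gives the noetherian property for $B \to C$ by the standard characterization.
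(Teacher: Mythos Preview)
Your proof is correct and is exactly the argument the paper gives, only expanded: the paper's proof reads in its entirety ``View $B$-submodules of $C$ as $A$-modules,'' which is precisely the observation you spell out via the ascending chain condition.
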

\paragraph{Proof} View $B$-submodules of $C$ as $A$-modules.
\qed

\begin{Remark}\label{map}
In this lemma $A\to C$ and $B\to C$ must be homomorphisms, but $A\to B$ could be just a map.
\end{Remark}

\begin{Lemma}\label{even int}
Suppose $B$ is finitely generated as a graded commutative $k$-algebra. Then $f:A\to B$ is noetherian
if and only if $B^\even$ is integral over $f(A^\even)$.
\end{Lemma}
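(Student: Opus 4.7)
The plan is to reduce the problem to the commutative subring $A^{\even}\subseteq A$, exploiting the decomposition $B=B^{\even}\oplus B^{\mathrm{odd}}$ and the observation that every $A$-submodule of $B$ is automatically an $A^{\even}$-submodule.

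For $(\Rightarrow)$, suppose $f$ is noetherian. I would pick $b\in B^{\even}$ and build the ascending chain of $A$-submodules $M_n:=\sum_{i=0}^n f(A)\,b^i\subseteq B$. Since $B$ is noetherian over $A$, the chain stabilises, say $M_n=M_{n-1}$, so $b^n=\sum_{i<n}f(a_i)b^i$ for some $a_i\in A$. Taking the even-degree part of this identity---each $b^i$ has even degree, so the odd contributions must cancel---produces $b^n=\sum_{i<n}f(a_i^{\even})b^i$, a relation of integral dependence. Hence every $b\in B^{\even}$ is integral over $f(A^{\even})$.

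For $(\Leftarrow)$, assume $B^{\even}$ is integral over $f(A^{\even})$. I would first check that $B^{\even}$ is a finitely generated commutative $k$-algebra: from homogeneous $k$-algebra generators of $B$, a short combinatorial argument (using that odd-degree generators square to zero in characteristic $p$ odd, while in characteristic~$2$ the algebra $B$ is just commutative) shows $B^{\even}$ is generated as a $k$-algebra by the even generators and finitely many pairwise products of odd ones. By Hilbert's basis theorem, $B^{\even}$ is noetherian. Moreover, integrality extends from $B^{\even}$ to all of $B$: for $x\in B^{\mathrm{odd}}$ one has either $x^2=0$ (odd $p$) or $x^2\in B^{\even}$ which is integral (char~$2$), and an integral relation for $x^2$ immediately yields one for $x$. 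So $B$ is integral over $f(A^{\even})$, and being finitely generated as a $k$-algebra, it is in fact a finite $f(A^{\even})$-module.

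To conclude, I would invoke the Eakin--Nagata theorem for the commutative inclusion $f(A^{\even})\subseteq B^{\even}$: since $B^{\even}$ is noetherian and finite as an $f(A^{\even})$-module, $f(A^{\even})$ is noetherian. Therefore $B$, being finitely generated over the noetherian ring $f(A^{\even})$, is noetherian as an $f(A^{\even})$-module and satisfies ACC on $f(A^{\even})$-submodules. Every $A$-submodule of $B$ is in particular an $f(A^{\even})$-submodule, so $B$ is noetherian over $A$. The main subtlety I anticipate is the characteristic~$2$ case, where odd-degree elements need not be nilpotent; but the squaring trick brings them back into $B^{\even}$, making the argument uniform across all positive characteristics.
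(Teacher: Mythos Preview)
Your argument is correct and follows the same route as the paper: the $(\Rightarrow)$ direction via the ascending chain $\sum_{i\le n} f(A)b^i$ and a parity consideration, and the $(\Leftarrow)$ direction by reducing to the commutative subring $f(A^{\even})$ and using that $B^{\even}\to B$ is noetherian. Your explicit appeal to Eakin--Nagata to certify that $f(A^{\even})$ is a noetherian ring makes precise a step the paper's one-line proof of $(\Leftarrow)$ passes over in silence.
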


\paragraph{Proof}
The map $B^\even\to B$ is noetherian. So if $B^\even$ is integral over $f(A^\even)$, then 
$f$ is noetherian. Conversely, if $f$ is noetherian and $b\in B^\even$, then for some
$r$ one must have $b^r\in \sum_{i<r}f(A)b^i$. But then in fact $b^r\in \sum_{i<r}f(A^\even)b^i$.
\qed\\

In particular one has

\begin{Lemma}
Suppose $B$ is a finitely generated  commutative $k$-algebra. Let $n>1$ and let $A$ be a subalgebra of $B$
containing $x^n$ for
every $x\in B$. Then $A\hookrightarrow B$ is noetherian and $A$ is also finitely generated.
\end{Lemma}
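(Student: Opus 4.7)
The plan is to deduce both assertions from the hypothesis that $b^n\in A$ for every $b\in B$ by combining an integrality argument with the classical Artin--Tate trick.

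First I would fix $k$-algebra generators $b_1,\dots,b_m$ of $B$ and note that each $b_i$ is a root of the monic polynomial $t^n-b_i^n\in A[t]$, hence is integral over $A$. By the usual reduction (divide any power of $b_i$ by $b_i^n\in A$ until the exponent drops below $n$) the finitely many monomials $u_{e_1,\dots,e_m}:=b_1^{e_1}\cdots b_m^{e_m}$ with $0\le e_j<n$ span $B$ as an $A$-module. This already yields the first claim: $B$ is a finitely generated, hence noetherian, $A$-module, so $A\hookrightarrow B$ is noetherian.

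For the second claim I would apply Artin--Tate. Relabel the monomials above as $u_1,\dots,u_N$ with $u_1=1$, and write $u_iu_j=\sum_k a_{ijk}u_k$ with $a_{ijk}\in A$. Let $A_0\subseteq A$ be the $k$-subalgebra generated by the finitely many structure constants $a_{ijk}$; by Hilbert's basis theorem $A_0$ is noetherian. The $A_0$-submodule $B_0:=\sum_i A_0u_i$ of $B$ is closed under multiplication (by construction of $A_0$), contains $1=u_1$, and contains each generator $b_j$ (being one of the $u_i$), so $B_0$ is a subring of $B$ containing a $k$-algebra generating set of $B$; hence $B_0=B$, and $B$ is a finitely generated, whence noetherian, $A_0$-module.

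Finally, $A$ is an $A_0$-submodule of the noetherian $A_0$-module $B$, so $A$ is finitely generated as an $A_0$-module. Writing $A=A_0a_1+\cdots+A_0a_s$ one then obtains $A=k[\{a_{ijk}\}\cup\{a_1,\dots,a_s\}]$ as a $k$-algebra. The only step requiring care is the equality $B_0=B$, which depends on arranging for $1$ and the $b_j$'s to appear among the chosen $u_i$; everything else is the textbook Artin--Tate argument, so I do not expect a genuine obstacle.
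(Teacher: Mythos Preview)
Your argument is correct and follows essentially the same approach as the paper: both sandwich $A$ between a small finitely generated subalgebra and $B$, observe that $B$ is module-finite over the small subalgebra, and conclude by the Noether/Artin--Tate trick. The paper's version is slightly more direct in that it takes the bottom algebra to be $C=k[b_1^n,\dots,b_m^n]$ explicitly (so no structure constants are needed), but this is a cosmetic difference within the same strategy.
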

\paragraph{Proof} We follow Emmy Noether \cite{noether}. 
Indeed $B$ is integral over $A$. Take finitely many generators $b_i$ of $B$ and let
$C$ be the subalgebra generated by the $b_j^n$. Then $A$ is a $C$-submodule of $B$,
hence finitely generated.\qed 
\\

Invariant theory tells
\begin{Lemma}\cite[Thm. 16.9]{Grosshans book}\label{inv noeth}
Let $f:A\to B$ be a noetherian homomorphism of finitely generated graded commutative $k$-algebras with rational $G$ action. Then
$A^G\to B^G$ is noetherian.\hfill$\Box$
\end{Lemma}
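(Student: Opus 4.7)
The plan is to reduce the statement to the classical Nagata finiteness theorem (applicable since $G=\GL_{n,k}$ is geometrically reductive) via the \emph{trivial extension} construction. First, because $A$ is a finitely generated, hence noetherian, graded commutative $k$-algebra and $f$ is noetherian, $B$ is finitely generated as an $A$-module. Form the graded commutative $k$-algebra $C:=A\oplus B$ in which $B$ is a square-zero ideal; explicitly, multiplication is given by $(a,b)\cdot(a',b')=(aa',\,f(a)b'+(-1)^{|b||a'|}bf(a'))$, with the $A$-bimodule structure on $B$ coming from $f$. Then $C$ is finitely generated as a graded commutative $k$-algebra (one adjoins $A$-module generators of $B$ to a set of $k$-algebra generators of $A$), and the rational $G$-actions on $A$ and $B$ combine to a rational $G$-action on $C$.

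Next I would invoke Nagata's theorem (Haboush plus \cite[Ch.\ 2]{Springer} or \cite[Thm.\ 16.9]{Grosshans book}) to conclude that $C^G$ is again a finitely generated graded commutative $k$-algebra. Taking invariants splits the direct sum decomposition: $C^G=A^G\oplus B^G$ as a graded $k$-module, with $A^G$ a subalgebra and $B^G$ a square-zero ideal. Being an ideal in the noetherian ring $C^G$, the module $B^G$ is finitely generated over $C^G$; since $B^G\cdot B^G=0$ the $C^G$-action on $B^G$ factors through the quotient $C^G/B^G=A^G$. Therefore $B^G$ is finitely generated as an $A^G$-module. Applying the Hilbert basis theorem to the finitely generated $k$-algebra $A^G$ shows that $A^G$ is noetherian, so finite generation upgrades to noetherianness: $A^G\to B^G$ is a noetherian homomorphism.

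The only real subtlety is bookkeeping: one has to verify that the square-zero extension $C$ really is graded commutative (the Koszul signs in the bimodule action above are the correct ones) and that the $G$-action on $C$ is rational; both are routine once written out, so I do not expect any genuine obstacle. All of the substance is packaged in Nagata's theorem for $\GL_{n,k}$, and the whole argument is essentially a translation of \cite[Thm.\ 16.9]{Grosshans book} into the noetherian-homomorphism language introduced just above.
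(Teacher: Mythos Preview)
The paper gives no proof here; the lemma is simply quoted from \cite[Thm.~16.9]{Grosshans book} (note the $\Box$ immediately after the statement). Your trivial-extension argument is precisely the standard proof of that theorem, so in substance you are reproducing what the paper cites rather than offering an alternative route.

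Two small points deserve attention. First, Nagata's theorem and Grosshans's Thm.~16.9 are stated for \emph{commutative} algebras, whereas your $C=A\oplus B$ is only graded commutative. The fix is already implicit in Lemma~\ref{even int}: replace $A$ by $A^\even$, which is a finitely generated commutative $k$-algebra over which $A$, hence $B$, is still noetherian. The trivial extension $A^\even\oplus B$ is then genuinely commutative (for $a\in A^\even$ one has $f(a)b=(-1)^{|a||b|}bf(a)=bf(a)$, and $B\cdot B=0$), so Nagata applies and gives $B^G$ noetherian over $(A^\even)^G\subseteq A^G$. Second, your multiplication formula should read $(a,b)(a',b')=(aa',\,f(a)b'+bf(a'))$ with no extra sign; the $(-1)^{|b||a'|}$ you inserted actually breaks associativity (check three odd-degree factors). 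Graded commutativity of $C$ then follows directly from that of $A$ and $B$. Neither point affects the overall strategy, which is sound.
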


\begin{Lemma}
Let $f:A\to B$ be a noetherian homomorphism of finitely generated graded commutative $k$-algebras with rational $G_r$ action. Then
$H^*(G_r,A)\to H^*(G_r,B)$ is noetherian.
\end{Lemma}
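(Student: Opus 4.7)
The plan is to apply Friedlander--Suslin's Theorem 1.5 from \cite{Friedlander-Suslin} to both $A$ and $B$, and then chase the resulting ring diagrams. The theorem is available in our setting because Lemma \ref{cup}(\ref{new e_r}) says that $c_r[1]$ restricts nontrivially to $H^{2p^{r-1}}(G_1,\gl_n^{(r)})$, which is the only property of the universal class $e_r$ used in \cite[Section 1]{Friedlander-Suslin}.

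Write $R := \bigotimes_{i=1}^{r} S^*(\gl_n^{\#(r)}(2p^{i-1}))$. Applied with ``$C$'' $= A^{G_r}$ and with ``$C$'' $= B^{G_r}$ respectively, the Friedlander--Suslin theorem produces noetherian ring homomorphisms
\[ R \otimes A^{G_r} \longrightarrow H^*(G_r,A), \qquad R \otimes B^{G_r} \longrightarrow H^*(G_r,B), \]
where $R$ acts on each target via cup products with the restrictions to $G_r$ of the $c_i[1]$, exactly as in Lemma \ref{cup}(\ref{cuphom}). Here I rely on the fact that $B$ (resp.\ $A$) is already a noetherian module over its own invariants, so the hypotheses of Theorem~1.5 are satisfied.

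Next I would use invariant theory to transfer the noetherian hypothesis across $f$. By Lemma \ref{inv noeth}, the restriction of $f$ to $G_r$-invariants, $A^{G_r} \to B^{G_r}$, is noetherian, and consequently $R \otimes A^{G_r} \to R \otimes B^{G_r}$ is noetherian as well. Composing with the noetherian map $R \otimes B^{G_r} \to H^*(G_r,B)$ yields a noetherian homomorphism $R \otimes A^{G_r} \to H^*(G_r,B)$. By naturality of cup product and of the inclusion $M^{G_r}\hookrightarrow H^*(G_r,M)$, this composite equals
\[ R \otimes A^{G_r} \longrightarrow H^*(G_r,A) \xrightarrow{\;f_*\;} H^*(G_r,B). \]
Then Lemma \ref{second noeth} (with ``$A$'' $= R\otimes A^{G_r}$ and ``$B$'' $= H^*(G_r,A)$) implies that $f_*: H^*(G_r,A) \to H^*(G_r,B)$ is noetherian, as required.

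The main point to verify is that the $c_i[1]$ really can serve as the universal classes $e_i$ of \cite{Friedlander-Suslin} uniformly in the coefficient algebra; this is exactly the content of Lemma \ref{cup}(\ref{new e_r}), together with the observation that the construction of the noetherian module structure in \cite[section 1]{Friedlander-Suslin} uses the universal classes only through the nontriviality of their restrictions to infinitesimal subgroups. Commutativity of the square of ring homomorphisms above is routine, following from functoriality of cup product and of taking $G_r$-invariants.
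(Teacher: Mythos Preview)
Your argument is correct and rests on the same ingredients as the paper's proof: Friedlander--Suslin's Theorem~1.5 together with Lemma~\ref{second noeth}. The paper's version is slightly more economical: it fixes a single $C\subseteq A^{G_r}$ (namely $H^0(G_r,A^\even)$ or the subalgebra generated by $p^r$-th powers), observes that $B$ is already a noetherian $C$-module, applies \cite[Theorem~1.5]{Friedlander-Suslin} once to get $R\otimes C\to H^*(G_r,B)$ noetherian, and notes that this map factors through $H^*(G_r,A)$. This avoids your separate application of Theorem~1.5 to $A$, the passage $A^{G_r}\to B^{G_r}$, and the tensoring step. One small point: you invoke Lemma~\ref{inv noeth} for $G_r$-invariants, but as stated that lemma is for $G$; the conclusion $A^{G_r}\to B^{G_r}$ noetherian is still valid (either because $G_r$ is geometrically reductive, or directly because $A^{G_r}$ contains all $p^r$-th powers of $A$), but the paper's route sidesteps this entirely.
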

\paragraph{Proof}
Take $C=H^0(G_r,A^\even )$ or its subalgebra generated by the $p^r$-th powers in $A^\even$.
Then apply \cite[Theorem 1.5, Remark 1.5.1]{Friedlander-Suslin}.
\qed\\

We will need a minor variation on a theorem of Friedlander and Suslin.
\begin{Theorem}\label{smaller height}
Let $r\geq1$.
Let $S\subset G_r$ be an infinitesimal group scheme over $k$ of height at most $r$.
Let further $C$ be a finitely generated commutative $k$-algebra (considered as a trivial $S$-module)
and let $M$ be a noetherian $C$-module on which $S$ acts by  $C$-linear transformations.
Then $H^*(S,M)$ is a noetherian module over the algebra $\bigotimes_{i=1}^rS^*((\gl_n^{(r)})^\#(2p^{i-1}))\otimes C$,
with the map given as suggested by Lemma \ref{cup}.
\end{Theorem}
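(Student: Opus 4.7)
The plan is to follow the strategy of \cite[Theorem 1.5]{Friedlander-Suslin}, modified so that the polynomial factor acts via the lifted universal classes $c_i[1]^{(r-i)}$ pulled back from the ambient $\GL_n$-cohomology rather than through classes intrinsic to $S$. The crucial enabling input is Lemma \ref{cup}(\ref{new e_r}), which guarantees that the restriction of $c_i[1]$ to $G_1$ is nontrivial and can be taken as the Friedlander--Suslin universal class $e_i$ of \cite[Theorem 1.2]{Friedlander-Suslin}. We induct on $r$.

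For the base case $r=1$, write $\mathfrak{s} := \mathrm{Lie}(S) \subset \gl_n$. Applying \cite[Theorem 1.5]{Friedlander-Suslin} directly to $S$ yields that $H^*(S,M)$ is noetherian over $S^*(\mathfrak{s}^{\#(1)}(2)) \otimes C$, with action by cup product with the Friedlander--Suslin class $e_1^S \in H^2(S,\mathfrak{s}^{(1)})$. By Lemma \ref{cup}(\ref{new e_r}) together with the naturality of these classes under closed embeddings of group schemes, $c[1]|_S \in H^2(S, \gl_n^{(1)})$ equals, up to a fixed scalar, the image of $e_1^S$ under the map induced by $\mathfrak{s} \hookrightarrow \gl_n$. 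Hence the algebra map of Lemma \ref{cup}(\ref{cuphom}) factors as $S^*(\gl_n^{\#(1)}(2)) \twoheadrightarrow S^*(\mathfrak{s}^{\#(1)}(2)) \to H^*(S,k)$, and noetherianness transfers to the larger algebra $S^*(\gl_n^{\#(1)}(2)) \otimes C$.

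For the inductive step, assume the theorem for $r-1$ and set $S_1 := S \cap G_1 \lhd S$, an infinitesimal subgroup of height $\le 1$. The quotient $S/S_1$ embeds in $G_r/G_1 \cong (\GL_n^{(1)})_{r-1}$, which has height $\leq r-1$. Consider the Hochschild--Serre spectral sequence $E_2^{i,j} = H^i(S/S_1, H^j(S_1,M)) \Rightarrow H^{i+j}(S,M)$. By the base case, $H^*(S_1,M)$ is noetherian over $D := S^*(\gl_n^{\#(1)}(2)) \otimes C$ via $c[1]$. Applying the inductive hypothesis to $S/S_1 \subset (\GL_n^{(1)})_{r-1}$ with $H^*(S_1,M)$ viewed as an $S/S_1$-equivariant, $D$-linear noetherian module, we conclude that $E_2^{*,*}$ is noetherian over $\bigotimes_{i=1}^{r-1} S^*((\gl_n^{(r)})^{\#}(2p^i)) \otimes D$. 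After reindexing $i \mapsto i+1$ and absorbing the degree-$2$ factor of $D$ as the $i=1$ tensor factor (using Lemma \ref{inv noeth} to handle the intermediate $S/S_1$-invariants), this algebra coincides with $\bigotimes_{i=1}^r S^*((\gl_n^{(r)})^{\#}(2p^{i-1})) \otimes C$, and its action on $E_2$ matches, up to edge homomorphisms, the action by cup product with the restrictions $c_i[1]^{(r-i)}|_S$.

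Finally, since each $c_i[1]^{(r-i)}$ is defined globally on $\GL_{n,k}$, its restriction to $S$ lifts the corresponding generator on $E_2$ to a permanent cycle. The ring-of-operators argument \cite[Lemma 1.6]{Friedlander-Suslin}, applied with operator ring $\bigotimes_{i=1}^r S^*((\gl_n^{(r)})^{\#}(2p^{i-1})) \otimes C$, then implies that the spectral sequence stops and that $H^*(S,M)$ is noetherian over this algebra via the map from Lemma \ref{cup}. The main obstacle is the Frobenius bookkeeping: one must verify that the inductive hypothesis applied to $S/S_1 \subset (\GL_n^{(1)})_{r-1}$ yields exactly the tensor factors for $i=2,\dots,r$ of the desired algebra (with the correct Frobenius-twisted degrees), and that the permanent-cycle lifts of the $c_i[1]^{(r-i)}$ pair correctly with the $E_2$-generators under the edge homomorphisms. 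This compatibility follows from the naturality of the $c_i[1]$ under the Frobenius homomorphism $G \to G^{(1)}$ and the defining relation $c_r[a] = \pi^{r-1}_*(c[ap^{r-1}])$.
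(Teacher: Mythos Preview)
Your approach is the same as the paper's: both rerun the Friedlander--Suslin induction of \cite[Theorem~1.5]{Friedlander-Suslin}. The paper's proof is far terser, however. It simply observes that the sole difference from \cite[Theorem~1.5, Remark~1.5.1]{Friedlander-Suslin} is that the height of $S$ is no longer assumed to equal $r$, and that the only new case this introduces in their inductive argument is the obvious base case $r=1$ with $S$ the trivial group; everything else in their proof goes through verbatim. You reconstruct the whole induction, which is not wrong in spirit but is unnecessary, and your write-up does not isolate this trivial base case as the one genuinely new ingredient.

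Two points in your reconstruction need correction. First, the appeal to Lemma~\ref{inv noeth} is out of place: that lemma concerns invariants under the reductive group $G$ and has no role in the inductive step for the infinitesimal quotient $S/S_1$. Second, and more substantively, you cannot apply the inductive hypothesis directly with $D=S^*(\gl_n^{\#(1)}(2))\otimes C$ in the role of the trivial coefficient algebra, because $S/S_1\subset (G^{(1)})_{r-1}$ does not in general act trivially on $\gl_n^{(1)}$. The Friedlander--Suslin argument does not proceed that way: it invokes their Lemma~1.6, feeding the $i=1$ polynomial factor in through the abutment (as permanent cycles pulled back from $H^*(S,-)$) and the factors for $i\geq 2$ through the base $E_2^{*,0}$, rather than absorbing the $i=1$ factor into a new ``$C$''. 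Once you follow their argument faithfully, the only addition required is exactly the trivial base case the paper singles out.
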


\begin{Corollary}\label{noeth res}
The restriction map $H^*(G_r,C)\to H^*(G_{r-1},C)$ is noetherian.
\end{Corollary}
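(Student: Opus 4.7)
The plan is to deduce the corollary by applying Theorem \ref{smaller height} twice — once with $S = G_r$ and once with $S = G_{r-1}$ — and then extracting the noetherianness of the restriction map via Lemma \ref{second noeth}. Since $G_{r-1}$ has height $r-1\leq r$ and sits inside $G_r$, it is a legitimate choice of $S$ in the theorem.

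First, I would set $R := \bigotimes_{i=1}^r S^*((\gl_n^{(r)})^\#(2p^{i-1})) \otimes C$ and apply Theorem \ref{smaller height} with $S = G_{r-1}$ and $M = C$ (trivial module). This produces a structure homomorphism $R \to H^*(G_{r-1},C)$ making $H^*(G_{r-1},C)$ noetherian over $R$.

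The key observation is then that this structure map factors through the restriction $H^*(G_r,C) \to H^*(G_{r-1},C)$. Indeed, by Lemma \ref{cup}\ref{cuphom}, the action of $R$ is given by cup product with the restrictions of the classes $c_i[1]^{(r-i)} \in H^{2p^{i-1}}(G,\Gamma^{1}(\gl_n^{(r)}))$, together with the embedding of $C$. Since the restriction $G \to G_{r-1}$ factors as $G \to G_r \to G_{r-1}$, the restricted classes on $G_{r-1}$ are images under $\mathrm{res} : H^*(G_r,C) \to H^*(G_{r-1},C)$ of the corresponding classes on $G_r$; and because $\mathrm{res}$ is a ring homomorphism, cup product commutes with $\mathrm{res}$. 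This gives a commutative diagram
$$
\xymatrix{
R \ar[r] \ar[dr] & H^*(G_r,C) \ar[d]^{\mathrm{res}} \\
& H^*(G_{r-1},C).
}
$$

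Now the composite $R \to H^*(G_{r-1},C)$ is noetherian by the first step, so Lemma \ref{second noeth} (combined with Remark \ref{map}, since we only need the first arrow $R \to H^*(G_r,C)$ to be a map) applies to conclude that $\mathrm{res}: H^*(G_r,C) \to H^*(G_{r-1},C)$ is itself noetherian. The main thing to check carefully is the compatibility of the two module structures coming from Theorem \ref{smaller height}; this is an instance of the transitivity of restriction and the naturality of cup product, so it is routine rather than an obstacle.
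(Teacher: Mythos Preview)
Your argument is correct and is essentially the paper's own proof: apply Theorem~\ref{smaller height} with $S=G_{r-1}$, observe that the structure map from $R$ factors through $H^*(G_r,C)$, and invoke Lemma~\ref{second noeth} (with Remark~\ref{map}). One minor remark: despite your opening sentence, you never actually need the case $S=G_r$ of the theorem---the factorization through $H^*(G_r,C)$ comes from Lemma~\ref{cup} alone, and only the single application with $S=G_{r-1}$ is used.
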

\paragraph{Proof}
Take $S=G_{r-1}$ and note that the map $\bigotimes_{i=1}^rS^*((\gl_n^{(r)})^\#(2p^{i-1}))\otimes C\to H^*(G_{r-1},C)$
factors through $H^*(G_r,C)$.
\qed\\

\paragraph{Proof of the theorem}
The key difference with 
\cite[Theorem 1.5, Remark 1.5.1]{Friedlander-Suslin} is that we do not require the height of $S$ to be $r$.
(As $S\subset G_r$ the fact that its height is at most $r$ is automatic.)
Thus, to start their inductive argument, we must also check the obvious case where $r=1$ and $S$ is the trivial group.
The rest of the proof goes through without change. \qed\\

\begin{Remark}
If $S$ has height $s$, then the map $(\gl_n^{(r)})^\#(2p^{i-1})\to H^{2p^{i-1}}(S,k)$ is trivial for $r-i\geq s$.
\end{Remark}

\subsection{Cup products on the cochain level}
As we will need a differential graded algebra structure on Hochschild--Serre spectral sequences, we now expand the 
discussion of the Hochschild complex in \cite[I 5.14]{Jantzen}.
Let  $L$ be an affine algebraic  group scheme over the field $k$, $N$ a normal subgroup scheme, $R$ a commutative $k$-algebra on which 
 $L$ acts rationally by algebra automorphisms.
We have a Hochschild complex $C^*(L,R)$ with $R\otimes k[L]^{\otimes i}$ in degree $i$.
Define a cup product on $C^*(L,R)$ as follows. If $u\in C^r(L,R)$ and $v\in C^s(L,R)$, then $u\cup v$
is defined in simplified notation by 
$$(u\cup v)(g_1,\ldots g_{r+s})=u(g_1,\ldots,g_r).{}^{g_1\cdots g_r} v(g_{r+1},\ldots ,g_{r+s}),$$
where ${}^gr$ denotes the image of $r\in R$ under the action of $g$.
One checks that
\begin{Lemma}
With this cup product $C^*(L,R)$ is a differential graded algebra.
\end{Lemma}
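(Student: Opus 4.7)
My plan is to verify the two axioms that make $C^*(L,R)$ a DGA: associativity of $\cup$ and the Leibniz rule $d(u\cup v)=du\cup v+(-1)^r u\cup dv$. A preliminary remark: the product in the target of $u\cup v$ makes sense because ${}^{g_1\cdots g_r}$ acts on $R$ by an algebra automorphism, so multiplying $u(g_1,\ldots,g_r)\in R$ with a translated value of $v$ stays in $R$; this observation will also be used repeatedly below.

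\textbf{Associativity.}  Unfold $((u\cup v)\cup w)(g_1,\ldots,g_{r+s+t})$ directly. After one application of the definition the middle factor is ${}^{g_1\cdots g_{r+s}}w(g_{r+s+1},\ldots)$ while the outer part is $(u\cup v)(g_1,\ldots,g_{r+s})$; expanding the latter and using that ${}^{g_1\cdots g_r}$ is an algebra homomorphism and that ${}^{g}{}^{h}r={}^{gh}r$, one gets the symmetric expression
$$u(g_1,\ldots,g_r)\cdot {}^{g_1\cdots g_r}\!v(g_{r+1},\ldots,g_{r+s})\cdot {}^{g_1\cdots g_{r+s}}\!w(g_{r+s+1},\ldots,g_{r+s+t}).$$
Running the calculation in the other order for $u\cup(v\cup w)$ yields the same expression, so the two agree.

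\textbf{Leibniz rule.}  This is the standard Hochschild-cochain computation adapted to an algebra of coefficients. Expand $d(u\cup v)$ on $(g_1,\ldots,g_{r+s+1})$ using the Hochschild differential with its initial action term, its alternating face-map sum, and its final deletion term. Sort the $(r+s+2)$ summands into three groups: (i) those whose face map acts purely inside the first $r$ slots, which reassemble to $(du\cup v)(g_1,\ldots,g_{r+s+1})$ — here one uses that the unchanged $v$-block is still acted on by $g_1\cdots g_r$ because exactly the same $g_i$'s (possibly one of them multiplied with a neighbour) occur on the left of the cup; (ii) those whose face map acts purely inside the last $s$ slots, which reassemble to $(-1)^r(u\cup dv)(g_1,\ldots,g_{r+s+1})$, the sign $(-1)^r$ coming from the shift in index positions; and (iii) the single ``bridging'' term in which $g_r$ and $g_{r+1}$ are multiplied together. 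That bridging term appears twice with opposite signs — once as the last face term of $du\cup v$ and once as the first (action) term of $(-1)^r u\cup dv$ — and cancels; the fact that ${}^{g_1\cdots g_r}$ distributes over the product in $R$ is exactly what is needed to identify these two occurrences.

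The main obstacle is the sign and index bookkeeping in (i)--(iii), especially at the boundary between the $u$- and $v$-blocks: one must check that the initial action term of $dv$, after being transported by ${}^{g_1\cdots g_r}$, genuinely equals the last face term of $du\cup v$ up to a sign, and this rests crucially on the hypothesis that $L$ acts on $R$ by \emph{algebra} automorphisms so that ${}^{g}(ab)=({}^{g}a)({}^{g}b)$. Once the cancellations are verified, both DGA axioms are in place and the lemma follows.
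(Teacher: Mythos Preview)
Your verification is correct and is exactly the routine computation the paper has in mind; indeed the paper offers no proof at all here, merely prefacing the lemma with ``One checks that''. One small descriptive slip: the pair of cancelling terms you single out (the final term of $du\cup v$ and the initial action term of $(-1)^r u\cup dv$) are not literally terms in which $g_r$ and $g_{r+1}$ get multiplied inside an argument of $u$ or $v$ --- rather $g_{r+1}$ is absorbed into the twisting prefix ${}^{g_1\cdots g_{r+1}}$ acting on $v(g_{r+2},\ldots)$ --- but you have located them correctly and the cancellation goes through exactly as you say.
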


In particular, taking for $R$ the algebra $k[L]$ with $L$ acting by right translation, we get the differential
graded algebra $C^*(L,k[L])$, quasi-isomorphic to $k$. And the action by left translation on $k[L]$
is by $L$-module isomorphisms, so
this makes $C^*(L,k[L])$ into a differential graded algebra with $L$ action.
It consists of injective $L$-modules in every degree.
We write $C^*(L)$  for this differential graded algebra with $L$ action. One has $C^i(L)=k[L]^{\otimes i+1}$ and 
this is our elaboration of \cite[I 4.15 (1)]{Jantzen}.
 
The Hochschild--Serre spectral sequence $$E_2^{rs}=H^r(L/N,H^s(N,R))\Rightarrow H^{r+s}(L,R)$$
can now be based on the double complex $( C^*(L/N)\otimes (C^*(L)\otimes R)^N )^{L/N}$. 
The tensor product over $k$ of two differential graded algebras is again a differential graded algebra
and the spectral sequence
inherits differential graded algebra 
structures \cite[3.9]{Benson II}
from such structures on $C^*(L)\otimes R$, $(C^*(L)\otimes R)^N$, $C^*(L/N)\otimes (C^*(L)\otimes R)^N$.

\subsection{Hitting invariant classes}
We now come to the main result of this section, which is the counterpart of \cite[Cor. 4.8]{cohGrosshans}.
It does not seem to follow from the cohomological finite generation conjecture, but we will show it implies the conjecture.
\begin{Theorem}\label{dominate}Let $r\geq1$. Further
let $A$ be a finitely generated
commutative $k$-algebra with $G$ action.
Then $H^\even(G,A)\to H^0(G,H^*(G_r,A))$ is  noetherian.
\end{Theorem}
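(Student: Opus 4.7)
The plan is to build a commutative triangle
\[
\xymatrix{
H^0(G,B) \ar[dr]_-\phi \ar[r]^-{\tilde\phi} & H^\even(G,A) \ar[d]^\rho \\
 & H^0(G,H^*(G_r,A))
}
\]
in which $B:=\bigotimes_{i=1}^r S^*((\gl_n^{(r)})^\#(2p^{i-1}))\otimes A$, $\phi$ is the noetherian structure map produced by Theorem~\ref{smaller height}, and $\rho$ is the edge map of the theorem (induced by restriction $H^*(G,A)\to H^*(G_r,A)$, whose image lies in $G$-invariants because $G_r$ is normal in $G$). Once such a triangle is in place, Lemma~\ref{second noeth} together with Remark~\ref{map} (applied to $\tilde\phi$ viewed as a map and $\phi,\rho$ as homomorphisms) will yield that $\rho$ is noetherian, which is exactly the theorem.

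First I apply Theorem~\ref{smaller height} with $S=G_r$, $C=A$ and $M=A$: the cohomology $H^*(G_r,A)$ is noetherian over $B$, with structure map described by Lemma~\ref{cup}. Applying the functor of $G$-invariants and invoking Lemma~\ref{inv noeth}, I deduce that $\phi\colon H^0(G,B)\to H^0(G,H^*(G_r,A))$ is noetherian.

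To construct $\tilde\phi$, I exploit the fact that Lemma~\ref{cup} defines its structure map on $G_r$ by restricting universal classes that already live in $H^*(G,-)$. For the $i=1$ factor, cup product with $c[m]^{(r-1)}\in H^{2m}(G,\Gamma^m(\gl_n^{(r)}))$ composed with the canonical pairing $\Gamma^m\otimes S^m\to k$ yields a map $S^m((\gl_n^{(r)})^\#(2))\to H^{2m}(G,k)$. For $i\ge 2$, cup product with $c_i[1]^{(r-i)}\in H^{2p^{i-1}}(G,\gl_n^{(r)})$ on the generators $(\gl_n^{(r)})^\#$ extends (since the target is concentrated in even degrees and hence commutative) to an algebra homomorphism $S^*((\gl_n^{(r)})^\#(2p^{i-1}))\to H^{2p^{i-1}*}(G,k)$. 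Combining these by cup product in $H^*(G,k)$, pairing further with $A=H^0(G,A)$ via the natural $A$-action on $H^*(G,A)$, and taking $G$-invariants, I obtain $\tilde\phi\colon H^0(G,B)\to H^\even(G,A)$. Since restriction from $G$ to $G_r$ is a ring homomorphism compatible with cup products and pairings, the composite $\rho\circ\tilde\phi$ is the same construction carried out inside $H^*(G_r,-)$, i.e., equals $\phi$.

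The delicate step is precisely this identification $\rho\circ\tilde\phi=\phi$: for the $i=1$ factor it amounts to tracking through the proof of Lemma~\ref{cup} to see that the $G_r$-level structure map is indeed the restriction of cup product with $c[m]^{(r-1)}$ (which in turn relies on Theorem~\ref{divided powers} lifting $c[1]^{\cup m}$ to $c[m]$); for $i\ge 2$ it follows from the Frobenius-pullback description of $c_i[1]^{(r-i)}$ given in the proof of Lemma~\ref{cup}. Granting these identifications, Lemma~\ref{second noeth} closes the argument, so that the desired noetherianity of $\rho$ follows directly from the noetherianity of $\phi$ without any induction on $r$ beyond that used inside Theorem~\ref{smaller height} itself.
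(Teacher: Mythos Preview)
Your proposal has a genuine gap in the construction of $\tilde\phi$ for the factors with $i\ge 2$.

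First a minor point: in Theorem~\ref{smaller height} the algebra $C$ must carry the \emph{trivial} $S$-action, so you cannot take $C=A$. One must take $C=A^{G_r}$ (as the paper does in step~2); this is easily repaired.

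The real problem is the passage from the algebra map $S^*((\gl_n^{(r)})^\#(2p^{i-1}))\to H^*(G,k)$ to a map $H^0(G,B)\to H^\even(G,A)$. Your algebra map, built by extending cup product with $c_i[1]^{(r-i)}$ multiplicatively, is \emph{not} $G$-equivariant: the source carries a nontrivial $G$-action while $H^*(G,k)$ is trivial. Hence you cannot simply tensor with $A$ and take $G$-invariants; a general element of $H^0(G,B)$ is a $G$-invariant sum $\sum x_j\otimes a_j$ with neither $x_j$ nor $a_j$ individually invariant, and your formula gives no value on such an element. The only way to make $\tilde\phi$ well defined on $H^0(G,S^{a}((\gl_n^{(r)})^\#)\otimes C)$ is by cup product with a class in $H^{2ap^{i-1}}(G,\Gamma^{a}(\gl_n^{(r)}))$ followed by the $G$-equivariant pairing $\Gamma^a\otimes S^a\to k$. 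For $i=1$ the class $c[a]^{(r-1)}$ does this job, and Lemma~\ref{cup} confirms that its restriction to $G_r$ reproduces the Friedlander--Suslin structure map. For $i\ge 2$ the obvious candidate is $c_i[a]^{(r-i)}$, but Lemma~\ref{cup} only identifies the resulting map on $S^1$, not on $S^a$ for $a>1$. To get agreement in higher degrees one needs $\Delta_{a,b*}c_i[a+b]=c_i[a]\cup c_i[b]$, and this does \emph{not} follow from Theorem~\ref{divided powers}: knowing that $c[m]$ lifts $c[1]^{\cup m}$ under $\Gamma^m\hookrightarrow\otimes^m$ does not force $\Delta_{p^{i-1},\dots,p^{i-1}*}c[mp^{i-1}]=c[p^{i-1}]^{\cup m}$, because the induced map on $G$-cohomology need not be injective.

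This is precisely why the paper's second proof argues by induction on $r$, peeling off one Frobenius kernel via the Hochschild--Serre spectral sequence so that at each stage only the $i=1$ factor is in play. The direct factorization you are attempting is exactly the strategy of the \emph{first} proof, but that proof relies on the stronger multiplicative relation of Corollary~\ref{cor-univ-class} (equivalently Theorem~\ref{thm-cl-univ}(3)), not merely on Theorem~\ref{divided powers}.
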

\begin{Remark}
Recall that $H^0(G,H^*(G_r,A))$ is finitely generated as a $k$-algebra,
by \cite{Friedlander-Suslin} and invariant theory.
\end{Remark}
\paragraph{Proof}
\begin{list}{{\bf\stepcounter{enumi}\arabic{enumi}}.}{\topsep0pt
\parsep0pt\itemsep0pt
\setcounter{enumi}{0}\itemindent2em\labelwidth2em\leftmargin0pt}
\item If $M$ is a $G$-module on which $G_r$ acts trivially, then $H^0(G,M)$ and $H^0(G/G_r,M)$ %, $H^0(G,M^{(-r)})$ 
  denote the same subspace of $M$. We may thus switch between these variants.
\item We argue by induction on $r$.
Put $C=H^0(G_r,A)$. Then $C$ contains the elements of $A$ raised to
the power $p^r$, so  $C$
is also a finitely generated algebra and $A$ is
a noetherian module over it. 
\item Let $r=1$. 
This case is the same as in \cite{cohGrosshans}.
By \cite[Thm 1.5]{Friedlander-Suslin}
$H^*(G_1,A)$ is a noetherian module over the finitely generated
algebra
$$R=S^*((\gl_n^{(1)})^\#(2))\otimes
C.$$
Then, by invariant theory \cite[Thm. 16.9]{Grosshans book},
$H^0(G,H^*(G_1,A))$ is a noetherian module over the finitely
generated algebra $H^0(G,R)$.
By lemma \ref{cup}
we may take the algebra homomorphism $R\to H^*(G_1,A)$ of
\cite{Friedlander-Suslin}
to be based on cup product with our $c[a]=c[a]^{(0)}$ on the summand
$S^a((\gl_n^{(1)})^\#(2))\otimes
C$.
But then the map
$H^0(G,R)\to H^*(G_1,A)$
factors, as  a linear map, through $H^\even(G,A)$.
This settles the case $r=1$ by \ref{map}.
\item
Now let the level $r$ be greater than $1$.
We are going to follow the analysis in \cite[section 1]{Friedlander-Suslin} to peel off
one level at a time. Heuristically, in the tensor product of Theorem \ref{smaller height}
we treat one factor at a time. That is the main difference with the argument in \cite{cohGrosshans}.

Thus, consider the Hochschild--Serre spectral sequence 
$E^{ij}_2(C)=H^i(G_r/G_{r-1},H^j(G_{r-1},C))\Rightarrow H^{i+j}(G_r,C)$.
We first wish to argue that this spectral sequence stops, meaning that
$E^{**}_s(C)=E^{**}_\infty(C)$ for some finite~$s$. This is proved in \cite[section 1]{Friedlander-Suslin}
for a very similar spectral sequence. So we imitate the argument. We need to apply \cite[Lemma 1.6]{Friedlander-Suslin}
and its proof. We use $H^\even (G_r,C)$ for the $A$ of that lemma and  $S^*((\gl_n^{(r)})^\#(2))$ for its $B$.
We map $H^\even (G_r,C)$ in the obvious way to the abutment $H^* (G_r,C)$ and for $B\to E^{*0}_2(k)=H^*(G_r/G_{r-1},k)=
H^*(G_{1},k)^{(r-1)}$ we use the $(r-1)$-st Frobenius twist of the map $S^*((\gl_n^{(1)})^\#(2))\to H^*(G_{1},k)$ of
Lemma \ref{cup}. So we use the class $c[a]^{(r-1)}$ on $S^a((\gl_n^{(r)})^\#(2))$.
By Corollary \ref{noeth res} and Lemma \ref{even int} the restriction map $H^\even (G_r,C)\to H^*(G_{r-1},C)$ is noetherian and by
Theorem \ref{smaller height}, compare \cite[Cor 1.8]{Friedlander-Suslin}, 
it follows that the $H^\even (G_r,C)\otimes S^*((\gl_n^{(r)})^\#(2))$ module
$E^{**}_2(C)=H^*(G_{1},H^*(G_{r-1},C)^{(1-r)})^{(r-1)}$ is noetherian, so the spectral sequence stops, say at $E^{**}_s(C)$.
Note also that the image of $H^\even (G_r,C)\otimes S^*((\gl_n^{(r)})^\#(2))$ in $E^{**}_2(C)$ consists of permanent
cycles.
\item
As the spectral sequence is one of graded commutative
differential graded algebras, the $p$-th power of an even cochain in a page passes to the next page.
As the spectral sequence stops at page $s$ one finds that for an $x\in E^{\even,\even}_2(C)$ the
power $x^{p^s}$ is a permanent cycle. Let $P$ be the algebra generated by permanent cycles in $E^{\even,\even}_2(C)$.
Then $P\to E^{ij}_t(C)$ is noetherian for $2\leq t\leq\infty$. So $P^G\to (E^{**}_\infty(C))^G$ is
noetherian by Lemma \ref{inv noeth}.
\item
By the inductive assumption $H^0(G,H^*(G_{r-1},C\otimes S^*((\gl_n^{(r)})^\#(2))))$ 
is noetherian over  $H^\even(G, C\otimes S^*((\gl_n^{(r)})^\#(2)))$.
By step 1 we may
rewrite $H^0(G,H^j(G_{r-1},C\otimes S^i((\gl_n^{(r)})^\#(2))))$ as $H^0(G/G_{r-1},H^j(G_{r-1},C\otimes S^i((\gl_n^{(r)})^\#(2))))$. The latter description will be needed in the sequel.
We may map $H^0(G/G_{r-1},H^j(G_{r-1},C\otimes S^i((\gl_n^{(r)})^\#(2))))$  by 
restriction
to $H^0(G_r/G_{r-1},H^j(G_{r-1},C)\otimes S^i((\gl_n^{(r)})^\#(2)))$ and then to 
$E^{2i,j}_2(C)=H^{2i}(G_r/G_{r-1},H^{j}(G_{r-1},C))$ by cup product
with $c[i]^{(r-1)}$. So we now have a map from $H^\even(G, C\otimes S^*((\gl_n^{(r)})^\#(2)))$ to $E^{**}_2(C)$.
We will factor it further.
\item One checks that the map from $H^\even(G, C\otimes S^*((\gl_n^{(r)})^\#(2)))$ to
$H^0(G_r/G_{r-1},H^*(G_{r-1},C)\otimes S^*((\gl_n^{(r)})^\#(2)))$ of step~6 factors naturally through the algebra
$H^\even (G_r,C)\otimes S^*((\gl_n^{(r)})^\#(2))$ of step 4. 
Moreover, as the algebra $H^\even (G_r,C)\otimes S^*((\gl_n^{(r)})^\#(2))$ acts on the full spectral sequence, we may make 
$H^\even(G, C\otimes S^*((\gl_n^{(r)})^\#(2)))$ act on the full spectral sequence by way of that algebra.
\item
The noetherian map $H^\even (G_r,C)\otimes S^*((\gl_n^{(r)})^\#(2))\to
E^{**}_2(C)$ factors through $H^0(G_r/G_{r-1},H^*(G_{r-1},C\otimes S^*((\gl_n^{(r)})^\#(2))))$.
But then 
$H^0(G_r/G_{r-1},H^*(G_{r-1},C\otimes S^*((\gl_n^{(r)})^\#(2))))\to E^{**}_2(C)$ is noetherian by 
Lemma \ref{second noeth}. So by Lemma \ref{inv noeth} the map 
$H^0(G/G_{r-1},H^*(G_{r-1},C\otimes S^*((\gl_n^{(r)})^\#(2))))\to (E^{**}_2(C))^G$ is noetherian.
Combining with the inductive hypothesis,
 we learn that $H^\even(G, C\otimes S^*((\gl_n^{(r)})^\#(2)))\to (E^{**}_2(C))^G$ is noetherian.
It lands  in $P^G$, because the map in step 4 lands in $P$.
We conclude that $H^\even(G, C\otimes S^*((\gl_n^{(r)})^\#(2)))\to (E^{**}_\infty(C))^G$ is noetherian.
\item
We filter $H^\even(G, C\otimes S^*((\gl_n^{(r)})^\#(2)))$ by putting $H^\even(G, C\otimes S^t((\gl_n^{(r)})^\#(2)))$
in $H^\even(G, C\otimes S^*((\gl_n^{(r)})^\#(2)))^{\geq j}$ for $t\geq j$.
As in \cite[section~1]{Friedlander-Suslin} the filtered algebra may be identified with its associated graded,
and the map $H^\even(G, C\otimes S^*((\gl_n^{(r)})^\#(2)))\to H^*(G_r,C)$ respects filtrations.
Now we care about $H^*(G_r,C)^G$ as a module for $H^\even(G, C\otimes S^*((\gl_n^{(r)})^\#(2)))$.
To see that it is noetherian we may pass as in \cite[section 1]{Friedlander-Suslin}
to the associated graded, where one puts $(H^*(G_r,C)^G)^{\geq j}=(H^*(G_r,C)^G)\cap H^*(G_r,C)^{\geq j}$.
This associated graded of $H^*(G_r,C)^G$ is a submodule of $(E^{**}_\infty(C))^G$, containing the image of 
$H^\even(G, C\otimes S^*((\gl_n^{(r)})^\#(2)))$. So it is indeed noetherian.
We conclude that $H^*(G_r,C)^G$ is noetherian over $H^\even(G, C\otimes S^*((\gl_n^{(r)})^\#(2)))$.
\item
As in the case $r=1$ the map $H^\even(G, C\otimes S^*((\gl_n^{(r)})^\#(2)))\to H^*(G_r,C)^G$
factors, as a linear map, through $H^\even(G, C)$, so $H^\even(G, C)\to H^*(G_r,C)^G$ is noetherian by \ref{map}.
As $H^*(G_r,C)^G\to H^*(G_r,A)^G$ is noetherian, the result follows. 
\end{list}\qed\\

\subsection{Cohomological finite generation}
Now let $A$ be a finitely generated
commutative $k$-algebra with $G$ action.
We wish to show that $H^*(G,A)$ is finitely generated, following the same path as in \cite{cohGrosshans},
but using improvements from \cite{Srinivas vdK}.
As in \cite{cohGrosshans} we denote by $\A$ the coordinate ring of a flat family with general fiber $A$
and special fiber $\gr A$ \cite[Theorem 13]{Grosshans contr}. Choosing $r$ as in \cite[Prop. 3.8]{cohGrosshans}
we have the spectral sequence
$$E_2^{ij}=H^i(G/G_r,H^j(G_r,\gr A))\Rightarrow H^{i+j}(G,\gr A).$$ 
and
$R=H^*(G_r,\gr A)^{(-r)}$ is a finite module
over the algebra $$\bigotimes_{a=1}^rS^*((\gl_n)^\#(2p^{a-1}))\otimes
\hull(\gr A).$$ This algebra has a good filtration, and by the main result of \cite{Srinivas vdK}
the ring $R$ has finite good filtration dimension. In particular, there are only finitely many 
nonzero $H^i(G,R)$. Thus the same main result tells that $E_2^{0*}\to E_2^{**}$ is noetherian. 
Now $H^0(G/G_r,H^*(G_r,\A))\to H^0(G/G_r,H^*(G_r,\gr A))$ is noetherian by \cite{Friedlander-Suslin}
and lemma \ref{inv noeth}.
And $H^*(G,\A))\to H^0(G/G_r,H^*(G_r,\A))$ is noetherian by theorem \ref{dominate}, so another application of 
\cite[Lemma 1.6]{Friedlander-Suslin} (with $B=k$)
shows that $H^*(G,\A)\to H^*(G,\gr A)$ is noetherian. 

%\begin{Remark}One may replace $H^*(G,\A)$ with a suitable finitely
%generated graded subalgebra. In particular, this proves once more that $H^*(G,\gr A)$ is finitely generated.
%\end{Remark}

There is a map of spectral sequences from a totally degenerate spectral sequence 
$$E(\A):\quad E_1^{ij}(\A)=H^{i+j}(G,\gr_{-i}\A)\Rightarrow H^{i+j}(G, \A),$$
with pages $H^*(G,\A)$, to the spectral sequence $$E(A):\quad E_1^{ij}(A)=H^{i+j}(G,\gr_{-i}A)\Rightarrow H^{i+j}(G, A).$$
This makes that $H^*(G,\A)$ acts on $E(A)$ and 
the noetherian homomorphism  $H^*(G,\A)\to H^*(G,\gr A)$ is used in standard fashion \cite[slogan 3.9]{reductive}
to make the spectral sequence $E(A)$ stop.
It follows easily that $H^*(G,A)$ is finitely generated.
%  \par For the multiplicative structure we referred to Massey.
%  \par In the treatment of multiplicative structure for spectral sequences
%  of filtered complexes in \cite{Benson II}
%  one must allow the filtration to go
%  on negatively. Benson takes a descending filtration and
%  starts with the full complex. We do not start there.
%  To make it descend, we use negative indices, just like in the exercises
%  at the end of chapter XV in Cartan-Eilenberg.\par
%  \par Note that one may make our filtration finite by simply putting 
%  $$A_{\leq' i}=\cases{ A_{\leq i}& if $i\leq b$,\cr  A& else.\cr}$$
%  This truncation will not spoil the property $A_{\leq i}A_{\leq j}\subseteq A_{\leq i+j}$.
%  So one could redo Benson under the assumption that his descending filtration starts with the full module,
%  but this not necessarily at index zero. That should still avoid all convergence issues he is worried about.
%  Having multiplicative properties for all the truncated versions implies having them for the original.
%  Here I am thinking of our specific situation, where only finitely many bidegrees affect any given bidegree.\par  
So far $G$ was $\GL_{n,k}$.
As explained in some detail in \cite{reductive} this case
implies our Cohomological Finite Generation Conjecture (over fields.)
\begin{Remark}
The spectral sequence $E(A)$ is based on filtering the Hochschild complex of $A$. 
As it lives in the second quadrant,
the exposition of multiplicative structure in \cite[3.9]{Benson II} does not apply as stated. 
(In order to avoid convergence issues \cite{Benson II} uses a filtration that reaches a maximum.)
But \cite[Ch XV, Ex. 2]{Cartan-Eilenberg} is sufficiently general to cover our case. Or see \cite{Massey}.
\end{Remark}

\end{document}